\newcommand{\R}{\mathbb{R}}
\newcommand{\C}{\mathbb{C}}
\newcommand{\N}{\mathbb{N}}
\newcommand{\calX}{\mathcal{X}}
\newcommand{\calR}{\mathcal{R}}
\newcommand{\diam}{\operatorname{diam}}
\newcommand{\calJ}{\mathcal{J}}
\newcommand{\vare}{\varepsilon}
\numberwithin{equation}{section}
\newcommand{\ud}[0]{\,\mathrm{d}}
\newcommand{\esssup}[0]{\operatornamewithlimits{ess\,sup}}
\newcommand{\abs}[1]{|#1|}
\newcommand{\babs}[1]{\big|#1\big|}
\newcommand{\Babs}[1]{\Big|#1\Big|}
\newcommand{\Norm}[2]{\|#1\|_{#2}}
\newcommand{\bNorm}[2]{\big\|#1\big\|_{#2}}
\newcommand{\BNorm}[2]{\Big\|#1\Big\|_{#2}}
\newcommand{\ave}[1]{\langle #1\rangle}
\newcommand{\BMO}[0]{\operatorname{BMO}}
\newcommand{\bmo}[0]{\operatorname{bmo}}
\newcommand{\Dil}[0]{\operatorname{Dil}}
\newcommand{\loc}[0]{\operatorname{loc}}
\newcommand{\sign}[0]{\operatorname{sgn}}
\newcommand{\calD}[0]{\mathcal{D}}
\newcommand{\wt}[1]{{\widetilde{#1}}}
\theoremstyle{plain}
\newtheorem{thm}[equation]{Theorem}
\newtheorem{lem}[equation]{Lemma}
\newtheorem{prop}[equation]{Proposition}
\theoremstyle{definition}
\newtheorem{defn}[equation]{Definition}
\theoremstyle{remark}
\newtheorem{prbl}[equation]{Problem}
\newtheorem{rem}[equation]{Remark}
\title{Lower bound of the parabolic Hilbert commutator}
\author{Tuomas Oikari}
\address[T.O.]{Department of Mathematics and Statistics, University of Helsinki, P.O.B. 68, FI-00014 University of Helsinki, Finland}
\email{tuomas.v.oikari@helsinki.fi}
\subjclass[2010]{42B20}
\keywords{singular integrals, parabolic Hilbert transform, commutators, bounded mean oscillation, parabolic bmo}
\thanks{T. Oikari was supported by the Academy of Finland project numbers 306901 and 314829, by the Finnish Centre of Excellence in Analysis and Dynamics Research project No. 307333, by the three-year research grant of the University of Helsinki No. 75160010 and by the Jenny and Antti Wihuri Foundation.
}
\begin{document}

\begin{abstract} Answering a key point left open in the recent work of  Bongers, Guo, Li and Wick \cite{Bongers2019commutators}, we provide the lower bound
	\[
	\Norm{b}{\BMO_{\gamma}(\R^2)}\lesssim \Norm{[b,H_{\gamma}]}{L^p(\R^2)\to L^p(\R^2)},
	\]
	where $H_{\gamma}$ is the parabolic Hilbert transform.
\end{abstract}
	\maketitle
	%\tableofcontents

\section{Introduction}
The commutator of the parabolic Hilbert transform,
\begin{align*}
	[b,H_{\gamma}]f(x) = b(x)H_{\gamma}f(x)-H_{\gamma}(bf)(x),\qquad H_{\gamma}f(x) = p.v.\int_{\R} f(x-\gamma(t))\frac{\ud t}{t}, 
\end{align*}
where $b\in L^1_{\loc}(\R^2;\C),$ $\gamma(t) = (t,t^2)$ and $f:\R^2\to\C,$
was recently studied in Bongers et al. \cite{Bongers2019commutators}, where they prove the following commutator estimates
\begin{align}\label{upper}
\Norm{b}{test}\lesssim \Norm{[b,H_{\gamma}]}{L^p(\R^2)\to L^p(\R^2)} \lesssim 	\Norm{b}{\BMO_{\gamma}(\R^2)}.
\end{align}
The upper bound involves the parabolic bmo norm
\begin{align*}
\Norm{b}{\BMO_{\gamma}(\R^2)}= \sup_{Q\in\calR_{\gamma}}\fint_{Q}\abs{b-\ave{b}_Q},
\end{align*}
where $\calR_{\gamma}$ is the collection of parabolic rectangles, i.e., rectangles $R=I\times J$ in the plane parallel with the coordinate axes such that $\ell(J) = \ell(I)^2.$
The lower bound however involves the non-matching testing condition
\begin{align*}
\Norm{b}{test} = \sup_{Q\in\calR_{\gamma}}\fint_Q\babs{b(x)- \frac{1}{\mu\big(I_{x,E_Q}\big)}\int_{I_{x,E_Q}}b(x-\gamma(t))\ud\mu(t)}\ud x,
\end{align*}
where $\mu(t) = \frac{\ud t}{t}$ and
\begin{align*}
E_Q = \big\{x-\gamma(t): x\in Q, t\in [9\ell(I),10\ell(I)] \big\},\qquad I_{x,E_Q} = \big\{t\in \R: x-\gamma(t)\in E_Q\big\}.
\end{align*}
Often, the necessity (the lower bound) is even more challenging than the corresponding sufficiency (the upper bound). In \cite{Bongers2019commutators} the necessity was left open and we provide a proof here, thus completing the picture.  Our main result is the following Theorem \ref{thm:main1}.
\begin{thm}\label{thm:main1} Let $b\in L^1_{\loc}(\R^2;\C)$ and $ p\in(1,\infty).$ Then, 
	\begin{align*}
	\Norm{b}{\BMO_{\gamma}(\R^2)} \lesssim\Norm{[b,H_{\gamma}]}{L^p(\R^2)\to L^p(\R^2)}.
	\end{align*}
\end{thm}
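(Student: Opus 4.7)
The plan is to upgrade the testing inequality $\Norm{b}{test}\lesssim \Norm{[b,H_{\gamma}]}{L^p(\R^2)\to L^p(\R^2)}$ of \cite{Bongers2019commutators} to the full $\BMO_\gamma$ estimate by showing $\Norm{b}{\BMO_\gamma}\lesssim \Norm{b}{test}$. Using the elementary bound $\fint_Q|b-\ave{b}_Q|\leq 2\inf_{c\in\C}\fint_Q|b-c|$, it suffices to find, for each parabolic rectangle $Q\in\calR_\gamma$, some constant $c_Q$ satisfying $\fint_Q|b-c_Q|\lesssim \Norm{b}{test}$.

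Writing
\[
B_Q(x):=\frac{1}{\mu(I_{x,E_Q})}\int_{I_{x,E_Q}}b(x-\gamma(t))\ud\mu(t)
\]
for the curve-average appearing in the test, so that $\fint_Q|b-B_Q|\leq\Norm{b}{test}$, the first attempt is the choice $c_Q=\fint_Q B_Q$: combined with the triangle inequality and a Fubini computation, this reduces matters to controlling an integral over $t\in[9\ell(I),10\ell(I)]$ of the rectangle oscillations $\fint_{Q-\gamma(t)}|b-\ave{b}_{Q-\gamma(t)}|$ on the shifted parabolic rectangles $Q-\gamma(t)\in\calR_\gamma$. This is controlled only by $\Norm{b}{\BMO_\gamma}$, i.e., the very quantity we are trying to bound; hence the straightforward reduction is circular and a finer argument is required.

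My proposal is to replace $c_Q$ by $c_Q=\ave{b}_{Q^*}$ for a carefully chosen parabolic "twin" $Q^*\in\calR_\gamma$ (for instance $Q^*=Q-\gamma(t^*)$ with a fixed $t^*\in[9\ell(I),10\ell(I)]$), and to prove $\fint_Q|B_Q-\ave{b}_{Q^*}|\lesssim\Norm{b}{test}$ directly. The strategy is to exploit that, as $x$ varies over $Q$ and $t$ over $I_{x,E_Q}$, the points $x-\gamma(t)$ sweep out a region essentially covering $Q^*$, combined with an application of the test condition on $Q^*$ (and possibly on a few further twins). A short comparison between $\ave{b}_Q$ and $\ave{b}_{Q^*}$ via a chain of twins then finishes the proof. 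The main obstacle is the 1D-to-2D mismatch between the curve-average $B_Q$, which is supported on a one-dimensional parabolic arc for each fixed $x$, and the genuinely two-dimensional rectangle-average $\ave{b}_{Q^*}$; the sweep argument must be performed quantitatively so that the averaging in $x$ converts curves into regions without losing boundedness. A secondary, more technical, issue is the handling of $I_{x,E_Q}$ for $x$ near $\partial Q$, where it may strictly contain $[9\ell(I),10\ell(I)]$ and cause boundary corrections; these are expected to be absorbable by mild truncation or a slight enlargement of $Q$.
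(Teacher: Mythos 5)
There is a genuine gap, and it sits exactly at the point your sketch leaves open. Your plan reduces Theorem \ref{thm:main1} to the inequality $\Norm{b}{\BMO_{\gamma}(\R^2)}\lesssim\Norm{b}{test}$, but this inequality is not known: the chain proved in \cite{Bongers2019commutators} only gives $\Norm{b}{test}\lesssim\Norm{[b,H_{\gamma}]}{L^p\to L^p}\lesssim\Norm{b}{\BMO_{\gamma}}$, and even after the paper's result one only obtains $\Norm{b}{test}\lesssim\Norm{b}{\BMO_{\gamma}}$, never the converse. So the statement you propose to prove is formally \emph{stronger} than the theorem, and the precise reason the necessity was left open in \cite{Bongers2019commutators} is the difficulty of passing from the one-dimensional curve averages $B_Q(x)$ to the two-dimensional oscillation. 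Moreover, your "remaining step" $\fint_Q|B_Q-\ave{b}_{Q^*}|\lesssim\Norm{b}{test}$ is, given the testing inequality $\fint_Q|b-B_Q|\lesssim\Norm{b}{test}$, \emph{equivalent} (by the triangle inequality, in both directions) to the bound $\fint_Q|b-c_Q|\lesssim\Norm{b}{test}$ you are after; so the proposal restates the problem rather than reducing it. The sweep heuristic does not fill this in: averaging $B_Q(x)$ over $x\in Q$ only identifies $\fint_Q B_Q$ with a weighted mean of $b$ over $E_Q$, whereas the absolute value requires controlling the oscillation of $x\mapsto B_Q(x)$, i.e.\ comparing averages of $b$ over distinct, essentially disjoint curve pieces. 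The testing condition gives no such comparison: applied on $Q^*$ (or any twin inside $E_Q$) it relates values of $b$ on $Q^*$ to curve averages over the further displaced region $E_{Q^*}$, so the chain never returns to $Q$; and since the geometry of the test is fixed ($t\in[9\ell(I),10\ell(I)]$), there is no tunable parameter with which to make an error term small and absorb it.

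For contrast, the paper does not go through the testing condition at all. It dualizes $\int_Q|b-\ave{b}_Q|=\int bf$ with $f=(\sigma-\ave{\sigma}_Q)1_Q$ of mean zero, and performs an approximate weak factorization of $f$ through the commutator itself, using a purpose-built geometry $Q\to W\to P\to W\to Q$ (Propositions \ref{prop:pieceA} and \ref{prop:pieceB}, with the auxiliary weights $g_W,u_W$ compensating the varying curve--rectangle intersection lengths). The two mechanisms your approach lacks are exactly what make that argument close: the zero mean of $f$ produces an error term $\wt{(\wt{f}_P)}_Q$ of size $\lesssim\varepsilon^2$, and the free separation parameter $A$ can be taken large so that this term is absorbed into $\int_Q|b-\ave{b}_Q|$. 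Unless you can prove $\Norm{b}{\BMO_{\gamma}}\lesssim\Norm{b}{test}$ — which would be a new result in its own right and for which your sketch supplies no mechanism at the crucial 1D-to-2D step — the proposal does not yield the theorem.
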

Taken together the lower bound in Theorem \ref{thm:main1} and the upper bound in \eqref{upper} allow us to conclude the following.
\begin{thm}\label{thm:main1B} Let $b\in L^1_{\loc}(\R^2;\C)$ and $ p\in(1,\infty).$ Then, 
	\begin{align*}
	\Norm{b}{\BMO_{\gamma}(\R^2)} \sim \Norm{[b,H_{\gamma}]}{L^p(\R^2)\to L^p(\R^2)}.
	\end{align*}
\end{thm}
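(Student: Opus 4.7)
Since the upper bound in Theorem~\ref{thm:main1B} is \eqref{upper} from \cite{Bongers2019commutators}, it suffices to prove Theorem~\ref{thm:main1}. Fix a parabolic rectangle $Q = I \times J \in \calR_\gamma$ with $\ell(J) = \ell(I)^2 = s^2$; the goal is to bound $\fint_Q |b - \ave{b}_Q|\,dx$ uniformly in $Q$ by $\Norm{[b,H_\gamma]}{L^p\to L^p}$. In addition to the commutator norm, the testing-norm inequality $\Norm{b}{test} \lesssim \Norm{[b,H_\gamma]}{L^p\to L^p}$ from \cite{Bongers2019commutators} is available as an auxiliary tool to absorb error terms.

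My plan is the median/twin-rectangle method. Translate $Q$ along $\gamma$ into its well-separated twin $\wt Q = Q - \gamma(t_0)$ for some $t_0 \in [9s,10s]$; this is again a parabolic rectangle of the same dimensions. Let $m_Q, m_{\wt Q}$ be medians of $\Re b$ on $Q$ and $\wt Q$, split each into level sets $E_\pm, \wt E_\pm$ of measure $\ge |Q|/2$, and, choosing the sign pair so that the median gap aligns in sign with the half-oscillations, pair
\[
\pair{[b,H_\gamma]\mathbf{1}_{\wt E_+}}{\mathbf{1}_{E_-}} = \int_{E_-}\int_{\{t:\, x-\gamma(t)\in \wt E_+\}}\bigl(b(x)-b(x-\gamma(t))\bigr)\,\frac{dt}{t}\,dx.
\]
On the range $t \in [9s,10s]$ the kernel $1/t$ is of size $\sim 1/s$, and the three summands $b(x)-m_Q \le 0$, $m_Q-m_{\wt Q} \le 0$, $m_{\wt Q}-b(x-\gamma(t))\le 0$ share a sign, so their absolute values add. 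A geometric computation showing that the map $x \mapsto x-\gamma(t)$ sends a definite fraction of $Q$ into $\wt Q$ gives $\int_{\{t:\, x-\gamma(t)\in \wt E_+\}} \frac{dt}{t} \gtrsim 1$ for $x$ in most of $E_-$; hence
\[
\babs{\pair{[b,H_\gamma]\mathbf{1}_{\wt E_+}}{\mathbf{1}_{E_-}}} \gtrsim \int_{E_-}(m_Q - b)\,dx \;+\;\text{positive contributions}.
\]
Combined with the trivial upper bound $\Norm{[b,H_\gamma]}{L^p\to L^p}\Norm{\mathbf{1}_{\wt E_+}}{L^{p'}}\Norm{\mathbf{1}_{E_-}}{L^p} \lesssim \Norm{[b,H_\gamma]}{L^p\to L^p}|Q|$, and the median-mean inequality $\int_Q|b-\ave{b}_Q| \lesssim \int_Q|b-m_Q| = 2\int_{E_-}(m_Q - b)$, this yields Theorem~\ref{thm:main1}.

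The main obstacle is the degeneracy of $H_\gamma$: its kernel is concentrated on the one-dimensional curve $\gamma$ inside $\R^2$, so the ``good'' set $\{t : x-\gamma(t)\in \wt E_+\}$ is small and $x$-dependent, and the required uniform-in-$x$ lower bound $\gtrsim 1$ on its $\frac{dt}{t}$-mass is the technical core of the argument. Here the parabolic shape $\ell(J)=\ell(I)^2$ is crucial: it is precisely what makes the translation $x\mapsto x-\gamma(t)$ for $t \sim \ell(I)$ map $Q$ onto $\wt Q$ in a controlled way, with Jacobian of the natural parametrization bounded. Secondary error contributions---from $t$ outside $[9s,10s]$, from the variation of $1/t$ inside it, and from the non-exact matching between $Q-\gamma(t)$ and $\wt Q$ for $t\ne t_0$---are of exactly the form appearing in $\Norm{b}{test}$, and are therefore absorbed using the testing-norm bound of \cite{Bongers2019commutators}, completing the passage from the partial estimate already in the literature to the full parabolic $\BMO_\gamma$ lower bound.
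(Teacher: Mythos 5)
Your reduction to Theorem \ref{thm:main1} is fine, but the median/twin-rectangle argument breaks down at precisely the step you flag as ``the technical core,'' and it cannot be repaired by the means you propose. The claimed bound $\int_{\{t:\,x-\gamma(t)\in\wt E_+\}}\frac{\ud t}{t}\gtrsim 1$ for most $x\in E_-$ does not follow from $|\wt E_+|\ge|\wt Q|/2$: for each fixed $x$ the admissible curve piece $\{x-\gamma(t): t\sim 10\ell(I)\}$ is a one-dimensional, Lebesgue-null subset of $\wt Q$, so a set known only by its two-dimensional measure can meet it in a $t$-set of arbitrarily small (even zero) measure. A Fubini/averaging argument does give $\int_Q|\{t:x-\gamma(t)\in\wt E_+\}|\ud x\gtrsim \ell(I)\,|Q|$, hence fibers of length $\gtrsim\ell(I)$ for a \emph{small} positive fraction of $x\in Q$, but that fraction need not meet the part of $E_-$ where $m_Q-b$ carries the mass of $\int_Q|b-m_Q|$, and both $E_\pm,\wt E_\pm$ are chosen adversarially by $b$. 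This is exactly the one-dimensional-kernel versus two-dimensional-$\BMO_\gamma$ mismatch that the paper identifies as the obstruction, and it is why \cite{Bongers2019commutators} only obtained the testing-norm lower bound: $\Norm{b}{test}$ controls differences between $b(x)$ and averages of $b$ \emph{along the curve}, so invoking it to ``absorb'' the error terms does not supply the missing measure-transfer either; there is also no smallness parameter in your decomposition that would make an absorption scheme close. A secondary point: the median method requires real-valued $b$ (as the paper notes), while the theorem is for complex $b$; this part is fixable, since the kernel $1/t$ is real and $[\bar b,H_\gamma]f=\overline{[b,H_\gamma]\bar f}$ lets you pass to $\Re b$ and $\Im b$, but it should be said.

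The paper's proof takes a genuinely different route designed around this obstruction: an approximate weak factorization in which the dualizing function $f=(\sigma-\ave{\sigma}_Q)1_Q$ is factored through an intermediate region $W=\wt Q\cap\wt P$ reachable from $Q$ along $\gamma$ and from a far rectangle $P$ along $-\gamma$, with weights $g_W,u_W$ built (Section \ref{ss:gf}) so that the composed kernel density over $Q$ is nearly constant; smallness of the error then comes from the \emph{zero mean} of $f$ (estimate \eqref{A4}), not from pointwise sign alignment, and a four-step loop $Q\to W\to P\to W\to Q$ plus absorption in $A$ yields the $\BMO_\gamma$ lower bound. If you want to salvage a median-type scheme you would need a substitute for the fiber lower bound that uses the structure of level sets of $b$ rather than their measure alone; as written, the proposal has a genuine gap at its central step.
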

We prove Theorem \ref{thm:main1} with a non-trivial adaptation of the approximate weak factorization argument.

\vspace{0.2cm}

The approximate weak factorization (awf) argument for proving commutator lower bounds for singular integral operators (SIOs) was recently developed and applied in Hyt\"{o}nen \cite{HYT2021JMPA} to complete the following picture.
 Let $1<p,q<\infty,$ $b\in L^1_{\loc}(\R^d)$ and $T$ be a non-degenerate Calderón-Zygmund operator (CZO), then
\begin{align}\label{line1}
\Norm{[b,T]}{L^p(\R^d)\to L^q(\R^d)} \sim	
\begin{cases}
\Norm{b}{\BMO(\R^d)}, & q = p,\quad \text{\cite{CRW}}\quad (1976), \\
\Norm{b}{\dot C^{\alpha,0}(\R^d)},\quad \alpha = d\big(\frac{1}{p}-\frac{1}{q}\big), & q>p, \quad \text{\cite{Janson1978}}\quad (1978), \\
\Norm{b}{\dot{L}^s(\R^d)},\quad \frac{1}{q} = \frac{1}{s}+\frac{1}{p},  & q<p,\quad \text{\cite{HYT2021JMPA}}\quad (2018).
\end{cases}
\end{align}
The commutator in \eqref{line1} is defined by $[b,T]f = bTf-T(bf),$
and the listed references are Coifman, Rochberg, Weiss \cite{CRW} and Janson \cite{Janson1978}.
The awf argument is strong in that it gives a unified approach to all of the three cases, in that it works for many singular integrals with kernels satisfying only minimum non-degeneracy assumptions, and in that it is flexible enough to grant e.g. multi-parameter and multilinear extensions. 
For the multi-parameter variants of the awf argument see Airta, Hyt\"{o}nen, Li, Martikainen, Oikari \cite{AHLMO2021} and Oikari \cite{Oik2020(1)},
where, respectfully, the commutators
\begin{align}\label{line2}
\big[T_2,[T_1,b]\big],\big[b,T\big]:L^{p_1}(\R^{d_1};L^{p_2}(\R^{d_2}))\to L^{q_1}(\R^{d_1};L^{q_2}(\R^{d_2}))
\end{align}
were treated. On the line \eqref{line2}, $1<p_1,p_2,q_1,q_2<\infty,$ $T_i$ is a one-parameter CZO on $\R^{d_i},$ for $i=1,2,$ and $T$ is a bi-parameter CZO on $\R^{d_1+d_2}.$ 
The adaptation of the awf argument to the bi-parameter settings was not effortless and for both commutators on the line \eqref{line2} the characterization of some cases is still open. 
For the multilinear extension see Oikari \cite{Oik2021(1)}. 

Another often-used argument, next to the awf argument, for proving commutator lower bounds is through the median method. The median method can only handle real-valued functions $b,$ however, the advantage is that it works for iterated commutators. For an account of the median method see \cite{HYT2021JMPA}. 

Commutators have of course been studied outside the aforementioned research articles and for some additional historically significant developments, we direct the reader to Nehari \cite{Nehari1957} (the case $q=p$ with the Hilbert transform) and Uchiyama \cite{Uch1978} (compactness of commutators and the case $q=p$ with any Riesz transform). Lastly, we mention the notable recent developments of Lerner, Ombrosi, Rivera-Ríos \cite{LOR2} and Guo, Lian, Wu \cite{guo2017unified}, before \cite{HYT2021JMPA}, that both recognized good non-degeneracy assumptions for commutator lower bounds. 

%\vspace{0.2cm}

Commutator estimates, for example, imply factorization results for Hardy spaces, see \cite{CRW}, they have applications in PDEs through compensated compactness and div-curl lemmas, and they have played a major role in investigations of the Jacobian problem, see Coifman, Lions, Meyer and Semmes \cite{CLMS1993}, Lindberg \cite{Lindberg2017}, and \cite{HYT2021JMPA}. It is crucial in these applications that we have both commutator upper and lower bounds. 
%
%As an application of Theorem \ref{thm:main1B} we present the following factorization.
%\begin{thm}\label{parH1fact} Let $H^1_{(1,2)}(\R^2)$ be a Banach space such that $(H^1_{(1,2)}(\R^2))^* = \BMO_{\gamma}(\R^2),$ i.e., a predual of the parabolic bmo. Then, there holds that 
%	\begin{align*}
%		\Norm{f}{H^1_{(1,2)}(\R^2)}\sim \Norm{f}{H^1_{{\gamma}}(\R^2)} :=  \inf\Big\{\sum_{j=1}^{\infty}\Norm{g_j}{L^p}\Norm{h_j}{L^{p'}}: f = \sum_{j=1}^{\infty} g_jH_{\gamma}h_j-h_jH_{\gamma}^*g_j \Big\}.
%	\end{align*}
%\end{thm}
%The proof of Theorem \ref{parH1fact} can be found in \cite{CRW}, the only difference is the replacement of the corresponding commutator theorem therein with the one herein (Theorem \ref{thm:main1B}).

%
%\vspace{0.2cm}
%%
%\begin{prop}\label{prop:factH1} Let $H^1$ be a Banach space such that $(H^1)^* = \BMO$ and which satisfies the extremal inequality 
%	$\Norm{f}{H^1} \leq \sup_{\Norm{b}{\BMO}\leq 1}\babs{\int bf}.$ Assume that $\Norm{b}{\BMO}\sim\Norm{[b,T]}{L^p\to L^p}.$
%	Then, $\Norm{\cdot}{H^1}\sim \Norm{\cdot}{H^1_{F,T}},$  where the norm is
%	\begin{align*}
%	\Norm{f}{H^1_{F,T}} =  \inf\Big\{\sum_{j=1}^{\infty}\Norm{g_j}{L^p}\Norm{h_j}{L^{p'}}: f = \sum_{j=1}^{\infty} g_jTh_j-h_jT^*g_j \Big\},
%	\end{align*}
%	where $T$ is a non-degenerate $\CZO$ and $p>1$. Especially it follows that $H^1 = H^1_{F,T}.$
%\end{prop}

In this article we almost solely focus on commutator lower bounds. For the convenience of the reader, we recall some of the timely developments in the theory of commutator upper bounds. The rough rule of thumb is that the upper bounds in the cases $q\not= p$ are easy and the main work lies with the case $q=p.$ Concerning the case $q=p,$ a modern sparse domination proof of the linear (also, essentially the multilinear) case can be found in \cite{LOR2}; for a proof of the multi-parameter cases through dyadic decomposition techniques we refer the reader to Holmes, Petermichl and Wick \cite{HPW}, and to Li, Martikainen and Vuorinen \cite{LMV2019prodbmocom}, \cite{LMV2019biparcom}.

%

%\begin{align*}
%	[b,T]f = bTf-T(bf):L^p(\R^d)\to L^q(\R^d),\qquad p,q\in (1,\infty),
%\end{align*} 
%of Calderón-Zygmund operators $T$ and multiplying functions $b$ by finding suitable testing conditions on the function $b.$ For the convenience of the reader, new we recap the historically first results in a tabular form,
%\begin{align*}
%		&\bNorm{[b,H]}{L^p(\R)\to L^p(\R)}\sim \Norm{b}{\BMO(\R)},\quad& &p=q,& \quad &\mbox{Nehari, 1957, \cite{Nehari1957}},  \\
%		&\bNorm{[b,\calR_j]}{L^p(\R^d)\to L^q(\R^d)}\sim \Norm{b}{\dot C^{\alpha,0}(\R^d)},\quad &\frac{\alpha}{d}=\frac{1}{p}-\frac{1}{q} ,\qquad &p<q,& \quad &\mbox{Janson, 1978, \cite{Janson1978}},  \\
%		&\bNorm{[b,T]}{L^p(\R^d)\to L^q(\R^d)}\sim \Norm{b}{\dot{L}^r(\R^d)},\quad &\frac{1}{q}=\frac{1}{r}+\frac{1}{p},\qquad &p>q,& \quad &\mbox{Hyt\"{o}nen, 2018, \cite{HYT2021JMPA}},  \\	
%\end{align*} 
%where $H$ is the Hilbert transform, $\calR_j$ is the Riesz transform, i.e.,
%\begin{align}
%	Hf(x) = p.v.\int_{\R}f(x-t)\frac{\ud t}{t},\qquad \calR_jf(x) = p.v.\int_{\R^d}\frac{x_j-y_j}{\abs{x-y}^{d+1}}f(y)\ud y,
%\end{align}
%and $T$ is a general Calderón-Zygmund operator, and moreover the norms are
%\begin{align*}
%\Norm{b}{\BMO(\R)} &= \sup_{Q\in\calQ}\fint_Q\abs{b-\ave{b}_Q},\qquad \Norm{b}{\dot C^{\alpha,0}(\R^d)} = \sup_{Q\in\calQ}\ell(I)^{\alpha}\fint_Q\abs{b-\ave{b}_Q}, \\
% \Norm{b}{\dot{L}^r(\R^d)} &= \inf_{c\in\C}\Norm{b-c}{L^r(\R^d)}.
%\end{align*}
%

\vspace{0.2cm}
The settings considered in all the aforementioned research articles, apart from \cite{AHLMO2021} which treats an iterated commutator of product nature, are such that the dimension of the ambient space $\R^d$ is the same as that of the singular integral.
% with this we mean that for each $x\in\R^d,$ the kernel is in a non-trivial sense a mapping $K(x,\cdot):\R^d\setminus\{x\}\to\C.$
Detaching from this, we consider singular integrals in the plane that are lower dimensional compared to the functions they hit, i.e., the kernel is localized to a curve.  The challenge in adapting the awf argument to the parabolic setting lies with the fact that a priori a curve can only record one dimensional information, whereas the parabolic bmo involves a truly two dimensional quantity. 
This mismatch brings new elements to the awf argument and necessitates a construction of a new kind of geometry compared to those present in the previous cases.

The main idea of the proof of Theorem \ref{thm:main1} can however be recorded in a model situation that involves only lines, in contrast to curves.
Let us recall the  directional Hilbert transforms:
\begin{align*}
H_{\sigma}f(x) = p.v.\int_{\R}f(x-\sigma t)\frac{\ud t}{t},\qquad \sigma\in\mathbb{S}^1,\qquad f:\R^2\to\C.
\end{align*}
Let $\calR$ denote the collection of all rectangles in the plane parallel to the coordinate axes. Then, the little bmo space is defined by the norm
\begin{align*}
	\Norm{b}{\bmo(\R^2)}= \sup_{R\in\calR}\fint_{R}\abs{b-\ave{b}_R}.
\end{align*}
Our second result is the following.
\begin{thm}\label{thm:main2} Let $b\in L^1_{\loc}(\R^2;\C)$ and $p\in(1,\infty).$ Then,
	\begin{align*}
	\Norm{b}{\bmo(\R^2)} \sim \sum_{i=1,2}\Norm{[b,H_{e_i}]}{L^p(\R^2)\to L^p(\R^2)},
	\end{align*}
	where $e_1 = (1,0)$ and $e_2 = (0,1).$
\end{thm}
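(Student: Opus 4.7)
The plan is to reduce Theorem \ref{thm:main2} to the classical one-dimensional Coifman-Rochberg-Weiss theorem by exploiting the fact that each $H_{e_i}$ acts in only a single variable. The key auxiliary fact is the well-known identification
\[
\Norm{b}{\bmo(\R^2)} \sim \esssup_{x_2\in\R}\Norm{b(\cdot,x_2)}{\BMO(\R)} + \esssup_{x_1\in\R}\Norm{b(x_1,\cdot)}{\BMO(\R)},
\]
which in one direction follows by letting the short side of a rectangle $R=I\times J$ shrink (Lebesgue differentiation), and in the other by a Fubini-type splitting on $R=I\times J$.

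For the upper bound I would slice: for a.e.\ $x_2$, $H_{e_1}$ reduces to the one-dimensional Hilbert transform in $x_1$, so Fubini together with the one-dimensional CRW upper bound yields
\[
\Norm{[b,H_{e_1}]f}{L^p(\R^2)}^p = \int_{\R}\Norm{[b(\cdot,x_2),H]f(\cdot,x_2)}{L^p(\R)}^p \ud x_2 \lesssim \bigl(\esssup_{x_2}\Norm{b(\cdot,x_2)}{\BMO(\R)}\bigr)^p \Norm{f}{L^p(\R^2)}^p,
\]
and symmetrically for $H_{e_2}$; combining with the auxiliary identification handles the upper bound.

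For the lower bound I would test on tensor products $f(x_1,x_2)=g(x_1)h(x_2)$. Since $H_{e_1}$ only touches the first coordinate,
\[
[b,H_{e_1}]f(x_1,x_2)=h(x_2)\,[b(\cdot,x_2),H]g(x_1),
\]
so that
\[
\int_\R \abs{h(x_2)}^p \Norm{[b(\cdot,x_2),H]g}{L^p(\R)}^p \ud x_2 \le \Norm{[b,H_{e_1}]}{L^p(\R^2)\to L^p(\R^2)}^p \Norm{g}{L^p(\R)}^p \Norm{h}{L^p(\R)}^p.
\]
Letting $h$ range through $L^p$-normalized bumps concentrating at a Lebesgue point of the $L^1_{\loc}$ function $x_2\mapsto \Norm{[b(\cdot,x_2),H]g}{L^p(\R)}^p$, and then letting $g$ range through a countable dense subset of $L^p(\R)$ to intersect the resulting null sets, gives for a.e.\ $x_2$
\[
\Norm{[b(\cdot,x_2),H]}{L^p(\R)\to L^p(\R)} \le \Norm{[b,H_{e_1}]}{L^p(\R^2)\to L^p(\R^2)}.
\]
The one-dimensional CRW lower bound then yields $\Norm{b(\cdot,x_2)}{\BMO(\R)}\lesssim \Norm{[b,H_{e_1}]}{L^p(\R^2)\to L^p(\R^2)}$ for a.e.\ $x_2$; the symmetric argument handles $e_2$, and the auxiliary identification closes the proof.

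No step here poses real difficulty: the axis-aligned kernels decouple cleanly by direction, so the two-dimensional problem tensorizes onto two independent one-dimensional CRW invocations. This is precisely the feature that fails in the genuine parabolic setting of Theorem \ref{thm:main1}, where the single curve $\gamma(t)=(t,t^2)$ couples both coordinates simultaneously, blocking any such slicing and forcing the more elaborate adaptation of the approximate weak factorization argument developed in the rest of the paper.
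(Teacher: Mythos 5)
Your upper bound is the same slicing argument the paper uses (Lemma \ref{lem:bmo} plus the one-dimensional $q=p$ case of \eqref{line1}), but your lower bound takes a genuinely different route. The paper proves it directly by an approximate weak factorization: it fixes $R_0=I\times J$, dualizes $\int_{R_0}\abs{b-\ave{b}_{R_0}}=\int bf$, and transports $f$ around the loop $R_0\to R_1\to R_2\to R_3\to R_0$ of translated rectangles using $H_{e_1}$ and $H_{e_2}$, producing commutator pairings plus an error of size $A^{-2}\int_{R_0}\abs{b-\ave{b}_{R_0}}$ that is absorbed. You instead tensorize: from $[b,H_{e_1}](g\otimes h)=h\cdot[b(\cdot,x_2),H]g$ you extract $\Norm{[b(\cdot,x_2),H]}{L^p(\R)\to L^p(\R)}\le\Norm{[b,H_{e_1}]}{L^p(\R^2)\to L^p(\R^2)}$ for a.e.\ $x_2$, and then quote the one-dimensional lower bound and Lemma \ref{lem:bmo}; in effect you upgrade Proposition \ref{prop:trivial} to Theorem \ref{thm:main2} by identifying the full operator norm with the essential supremum of the slice norms. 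This works and is shorter, but one step deserves more care than your one-line justification: since $b(\cdot,x_2)$ is only in $L^1_{\loc}(\R)$, the map $g\mapsto[b(\cdot,x_2),H]g$ is not a priori continuous on $L^p(\R)$, so intersecting null sets over a countable family that is merely $L^p$-dense does not yet control the slice operator norm. The standard fix is to take the countable family to be, say, rational step functions and approximate a bounded compactly supported $g$ almost everywhere with a uniform bound on a fixed compact set: then $Hg_j\to Hg$ in $L^p$, $b(\cdot,x_2)g_j\to b(\cdot,x_2)g$ in $L^1$, hence $H(b(\cdot,x_2)g_j)\to H(b(\cdot,x_2)g)$ in measure by the weak $(1,1)$ bound, and Fatou transfers the estimate to $g$; alternatively, note that the one-dimensional lower-bound argument only tests the commutator on bounded compactly supported functions, so that class suffices. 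The trade-off is exactly the one you name: your reduction exploits the complete decoupling of the two directions and therefore gives no guidance for the parabolic case, whereas the paper proves Theorem \ref{thm:main2} by the awf loop precisely so that it can serve as the model later deformed into the $Q$, $W$, $P$ geometry of Theorem \ref{thm:main1}.
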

Even though theorems \ref{thm:main1} and \ref{thm:main2} are independent, we recommend that the proof of Theorem \ref{thm:main2} is read first. 
As it is perhaps not clear that Theorem \ref{thm:main2} is non-trivial, next, as a reminder, we record the bi-parameter result that follows immediately by applying known results.
\begin{prop}\label{prop:trivial} Let $b\in L^1_{\loc}(\R^2;\C)$ and $p\in(1,\infty).$ Then,
	\begin{align*}
	\Norm{b}{\bmo(\R^2)} \sim \max\big(\esssup_{x_1\in\R}&\bNorm{[b(x_1,\cdot),H]}{L^p(\R)\to L^p(\R)}, \esssup_{x_2\in\R}\bNorm{[b(\cdot,x_2),H]}{L^p(\R)\to L^p(\R)}\big).
	\end{align*}
\end{prop}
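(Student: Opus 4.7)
The plan is to assemble three classical ingredients and reduce everything to the one-dimensional Coifman--Rochberg--Weiss theorem. The key observation is that, unlike in Theorem~\ref{thm:main2}, the kernels of $H_{e_1}$ and $H_{e_2}$ are supported on the coordinate axes, so the problem genuinely decouples into two one-dimensional problems and nothing has to interact across them.

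First I would observe that $H_{e_2}$ acts only in the second variable: for almost every $x_1$ one has $([b,H_{e_2}]f)(x_1,x_2)=[b(x_1,\cdot),H](f(x_1,\cdot))(x_2)$, so Fubini yields
\[
\Norm{[b,H_{e_2}]f}{L^p(\R^2)}^p=\int_{\R}\bNorm{[b(x_1,\cdot),H]f(x_1,\cdot)}{L^p(\R)}^p\ud x_1.
\]
Taking the supremum over all $f$ gives $\Norm{[b,H_{e_2}]}{L^p(\R^2)\to L^p(\R^2)}\le\esssup_{x_1}\bNorm{[b(x_1,\cdot),H]}{L^p(\R)\to L^p(\R)}$, while testing against tensor products $f(x_1,x_2)=\phi(x_1)g(x_2)$ with $\phi$ an approximate identity concentrated near a near-optimal $x_1^\star$ reverses the inequality. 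The symmetric statement holds for $H_{e_1}$ with the roles of $x_1$ and $x_2$ swapped.

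Next, the classical one-dimensional CRW theorem applied slice-by-slice gives, with uniform constants, $\bNorm{[b(x_1,\cdot),H]}{L^p(\R)\to L^p(\R)}\sim\bNorm{b(x_1,\cdot)}{\BMO(\R)}$, so after passing to essential suprema the right-hand side of the proposition equals, up to constants,
\[
\max\Big(\esssup_{x_1}\bNorm{b(x_1,\cdot)}{\BMO(\R)},\esssup_{x_2}\bNorm{b(\cdot,x_2)}{\BMO(\R)}\Big).
\]

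Finally I would invoke the standard slice characterization of the little bmo space, namely that $\Norm{b}{\bmo(\R^2)}$ is comparable to the quantity just displayed. The direction $\gtrsim$ follows by sending $\ell(J)\to 0$ in a test rectangle $R=I\times J$ and using Lebesgue differentiation in the short variable; the direction $\lesssim$ follows from the triangle inequality after splitting $b(x)-\ave{b}_R=\big(b(x_1,x_2)-\ave{b(\cdot,x_2)}_I\big)+\big(\ave{b(\cdot,x_2)}_I-\ave{b}_R\big)$ and estimating the first piece by a slicewise BMO norm and the second by one in the orthogonal direction. There is no genuine obstacle here: the entire argument is a routine Fubini/CRW/slice-BMO sandwich, and the proposition is labelled trivial precisely because the decoupling into axis-aligned one-dimensional problems is exactly what fails for the genuinely parabolic $H_\gamma$ of Theorem~\ref{thm:main2}, where the curve $\gamma$ immediately breaks any such tensorial splitting.
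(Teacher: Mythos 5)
Your argument is correct and is essentially the paper's own proof: the proposition follows by applying the one-dimensional Coifman--Rochberg--Weiss theorem slice-by-slice together with the slice characterization of little bmo (Lemma \ref{lem:bmo}), which the paper cites from the literature and you re-derive. Note only that your opening Fubini/tensor-product step relating $\Norm{[b,H_{e_i}]}{L^p(\R^2)\to L^p(\R^2)}$ to the slicewise norms is superfluous here, since the proposition is already stated in terms of the slicewise commutator norms $\esssup_{x_1}\bNorm{[b(x_1,\cdot),H]}{L^p(\R)\to L^p(\R)}$ and $\esssup_{x_2}\bNorm{[b(\cdot,x_2),H]}{L^p(\R)\to L^p(\R)}$.
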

\begin{proof} Follows by Lemma \ref{lem:bmo} (see below) and the one-parameter result $p=q$ recorded on the line \eqref{line1}.
\end{proof}
The following Lemma \ref{lem:bmo} was recorded at least in \cite{HPW}.
\begin{lem}\label{lem:bmo} Let $b\in L^1_{\loc}(\R^d;\C).$ Then,
	\begin{align*}
	\Norm{b}{\bmo(\R^2)} \sim \max\big(\esssup_{x_1\in\R}\Norm{b(x_1,\cdot)}{\BMO(\R)},\esssup_{x_2\in\R}\Norm{b(\cdot,x_2)}{\BMO(\R)}\big).
	\end{align*}
\end{lem}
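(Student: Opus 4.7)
The plan is to prove the two inequalities separately by unpacking the two-dimensional rectangular average into iterated one-dimensional averages, and then (for the hard direction) passing through Lebesgue differentiation to a pointwise-in-$x_1$ statement.

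For the easy direction $\Norm{b}{\bmo(\R^2)}\lesssim\max(\cdots)$, I fix a rectangle $R=I\times J$ and add and subtract the vertical slice average $g(x_1):=\ave{b(x_1,\cdot)}_J$. Fubini gives $\ave{b}_R=\fint_I g(x_1)\ud x_1$, so
\[
\fint_R \abs{b-\ave{b}_R}\le \fint_I \fint_J\abs{b(x_1,x_2)-g(x_1)}\ud x_2\ud x_1 + \fint_I \abs{g(x_1)-\ave{g}_I}\ud x_1.
\]
The first term is dominated by $\esssup_{x_1}\Norm{b(x_1,\cdot)}{\BMO(\R)}$. For the second, I move the inner $\fint_J$ outside via the triangle inequality, writing $g(x_1)-\ave{g}_I=\fint_J(b(x_1,x_2)-\fint_I b(y_1,x_2)\ud y_1)\ud x_2$, and then dominate by $\esssup_{x_2}\Norm{b(\cdot,x_2)}{\BMO(\R)}$.

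For the reverse (harder) direction, I fix a rational interval $J$ and use Lebesgue differentiation to produce a full-measure set $\Omega_J\subset\R$ such that for $x_1\in\Omega_J$ simultaneously (i) $b(x_1,\cdot)\in L^1_{\loc}(\R)$, (ii) $x_1$ is a Lebesgue point of $y_1\mapsto\ave{b(y_1,\cdot)}_J$, and (iii) for every rational $c$, $x_1$ is a Lebesgue point of $y_1\mapsto \fint_J\abs{b(y_1,x_2)-c}\ud x_2$. Taking a shrinking sequence of intervals $I_n\ni x_1$ with rectangles $R_n=I_n\times J$, the bmo bound gives
\[
\fint_{R_n}\abs{b-\ave{b}_{R_n}}\le\Norm{b}{\bmo(\R^2)}.
\]
By (ii), $\ave{b}_{R_n}\to\ave{b(x_1,\cdot)}_J=:A$, so a triangle-inequality swap replaces $\ave{b}_{R_n}$ by $A$ at a vanishing cost; by (iii) and a small approximation of $A$ by rationals the left side converges to $\fint_J\abs{b(x_1,x_2)-A}\ud x_2$, yielding the desired slice inequality on $\Omega_J$. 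Letting $J$ range over a countable dense family of intervals and intersecting the corresponding full-measure sets gives a single full-measure set on which the slice BMO estimate holds for all rational $J$, and hence by density for all intervals $J$. The symmetric argument handles the other direction.

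The main obstacle is the last step: handling all test intervals $J$ simultaneously outside a single null set of $x_1$, and properly converting the $L^1_{\loc}$-type Lebesgue differentiation convergence into a pointwise BMO estimate on the slice. Once the countable-dense/null-set bookkeeping is set up, the rest reduces to routine triangle-inequality manipulations.
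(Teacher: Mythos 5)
Your argument is correct: the iterated-average/triangle-inequality computation gives the upper bound for $\Norm{b}{\bmo(\R^2)}$, and the Lebesgue-differentiation argument with a countable family of rational intervals $J$ and rational constants $c$ (for complex-valued $b$, take $c\in\Q+i\Q$) correctly recovers the slice estimate $\esssup_{x_1}\Norm{b(x_1,\cdot)}{\BMO(\R)}\lesssim\Norm{b}{\bmo(\R^2)}$ outside a single null set, with the symmetric case identical. Note that the paper itself gives no proof of this lemma (it only cites \cite{HPW}), so there is nothing to compare against; your proof is the standard one and fills that gap correctly.
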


\subsection{Basic notation}
We denote $L^1_{\loc}(\R^d;\C) = L^1_{\loc},$ $\int_{\R^d} = \int,$ and so on, mostly leaving out the ambient space if this information is obvious.

We denote averages with
$
\langle f \rangle_A = \fint_A f= \frac{1}{|A|} \int_A f,
$
where $|A|$ denotes the Lebesgue measure of the set $A$. The indicator function of a set $A$ is denoted by $1_A$.

We denote $z+ A  = \{z+a: a\in A\},$ $\Dil_{\lambda} A = \{z a: a\in A\}$ for $A\subset \R^2$ and $\lambda\in\R.$ 
For an interval $I\subset \R$ the centre-point is denoted $c_I$ and the concentric dilation is $\lambda I = [c_I-\lambda\frac{\ell(I)}{2},c_I+\lambda\frac{\ell(I)}{2}],$ for $\lambda>0.$  We also denote $-I = \Dil_{-1}I.$ 

A curve is a differentiable mapping $\gamma = (\gamma_1,\dots,\gamma_d):I\to\R^d$ parametrized over some interval $I.$  We denote curve length with $\ell(\gamma).$ 
%explicitly given by 
%$\int_I\sqrt{\sum_{i=1}^d\abs{\gamma_i'(t)}^2}\ud t,$ however, we do not need this formula anywhere.

We denote $A \lesssim B$, if $A \leq C B$ for some constant $C>0$ depending only on the dimension of the underlying space, on integration exponents and on other absolute constants appearing in the assumptions that we do not care about.
Then  $A \sim B$, if $A \lesssim B$ and $B \lesssim  A.$ Subscripts on constants ($C_{a,b,c,...}$) and quantifiers ($\lesssim_{a,b,c,...}$) signify their dependence on those subscripts. 

A large parameter $A>>1$ will appear throughout the text and tracking it is essential. Then, we write $\lesssim_A$ if and only if the said estimate depends on the parameter $A$, and if we write $X\lesssim C_A\cdot Y,$ then the implicit constant will never depend on the parameter $A.$

\subsection{Acknowledgements} I thank Emil Vuorinen for reading through the manuscript and for comments that led to improvements.

\section{Proof of Theorem \ref{thm:main2}}
\begin{proof}[Proof of Theorem \ref{thm:main2}] We first show that the commutator norms are bounded above by $\Norm{b}{\bmo}.$ This follows immediately by the standard boundedness theory of commutators and lemma \ref{lem:bmo},
	\begin{align*}
	\Norm{[b,H_{e_1}]f}{L^p(\R^2)} &= 	\BNorm{\bNorm{[b(x_1,x_2),H_{e_1}]f(x_1,x_2)}{L^p_{x_1}(\R)}}{L^p_{x_2}(\R)} \\ 
	&\leq \BNorm{\bNorm{[b(x_1,x_2),H]}{L^p_{x_1}(\R)\to L^p_{x_1}(\R)}\bNorm{f(x_1,x_2)}{L^p_{x_1}(\R)}}{L^p_{x_2}(\R)} \\
	&\lesssim \BNorm{\bNorm{b(x_1,x_2)}{\BMO_{x_1}(\R)}\bNorm{f(x_1,x_2)}{L^p_{x_1}(\R)}}{L^p_{x_2}(\R)} \\
	&\lesssim \esssup_{x_2\in\R}\Norm{b(x_1,x_2)}{\BMO_{x_1}(\R)}\bNorm{\bNorm{f(x_1,x_2)}{L^p_{x_1}(\R)}}{L^p_{x_2}(\R)} \\
	&\lesssim 	\Norm{b}{\bmo(\R^2)}\Norm{f}{L^p(\R^2)}.
	\end{align*}
	The other commutator norms are estimated similarly. We turn to the lower bound.
	
	Fix a rectangle $R_{0}= I\times J$ and a constant $A>1$ and define the three rectangles
	\begin{align*}
	R_1 = R_0+A\ell(I)e_1,\qquad R_2 = R_1 + A\ell(J)e_2,\qquad R_3 = R_2 - A\ell(I)e_1.
	\end{align*}
	Writing $\psi_{R_i}$ means that $\psi_{R_i}$ is a function supported on the set $R_i.$
	We begin with writing
	\begin{align}\label{eq:dualization}
	\int_{R_0}\abs{b-\ave{b}_{R_0}} = \int bf,\qquad f = (\theta-\ave{\theta}_{R_0})1_{R_0},\qquad \theta = \frac{\overline{b-\ave{b}_{R_0}}}{\abs{b-\ave{b}_{R_0}}}1_{\{b\not = \ave{b}_{R_0}\}},
	\end{align}
	and
		\begin{equation}\label{decomp1}
	\begin{split}
	f &=  \big[h_{R_0}H_{e_1}^*g_{R_1} - g_{R_1}H_{e_1}h_{R_0}\big] +  \wt{f}_{R_1} \\
	&=\big[h_{R_0}H_{e_1}^*g_{R_1} - g_{R_1}H_{e_1}h_{R_0}\big] + \big[h_{R_1}H_{e_2}^*g_{R_2}-g_{R_2}H_{e_2}h_{R_1}\big] + \wt{f}_{R_2},
	\end{split}
	\end{equation}
%	\begin{equation}
%		\begin{split}
%			f &= \left[\frac{f}{H_{e_1}^*g_{R_1}} H_{e_1}^*g_{R_1} - g_{R_1} H_{e_1}\big(\frac{f}{H_{e_1}^*g_{R_1}}\big)\right] + g_{R_1} H_{e_1}\big(\frac{f}{H_{e_1}^*g_{R_1}}\big)\\
%			&=  \big[h_{R_0}H_{e_1}^*g_{R_1} - g_{R_1}H_{e_1}h_{R_0}\big] +  \wt{f}_{R_1} = \cdots  \\
%			&\cdots=\big[h_{R_0}H_{e_1}^*g_{R_1} - g_{R_1}H_{e_1}h_{R_0}\big] + \big[h_{R_1}H_{e_2}^*g_{R_2}-g_{R_2}H_{e_2}h_{R_1}\big] + \wt{f}_{R_2},
%		\end{split}
%	\end{equation}
%	\begin{align}\label{decomp1}
%	f = \big[h_{R_0}H_{e_1}^*g_{R_1} - g_{R_1}H_{e_1}h_{R_0}\big] + \big[h_{R_1}H_{e_2}^*g_{R_2}-g_{R_2}H_{e_2}h_{R_1}\big] + \wt{f}_{R_2},
%	\end{align}
	where
	\begin{align*}
	h_{R_0} = \frac{f}{H_{e_1}^*g_{R_1}},\quad g_{R_i} = 1_{R_i},\quad \wt{f}_{R_1} = g_{R_1}H_{e_1}h_{R_0},\quad h_{R_1} = \frac{\wt{f}_{R_1}}{H_{e_2}^*g_{R_2}},\quad \wt{f}_{R_2} = g_{R_2}H_{e_2}h_{R_1}.
	\end{align*}
	The only possible problem in the above factorization of the function $f$ is a division by zero in $h_{R_i},$ $i=1,2,$ however, the estimates \eqref{wd} and \eqref{wdd} below show the denominators to be strictly positive functions.
	Next, we will show that
	\begin{align}\label{error}
	\abs{h_{R_0}}\lesssim_A 1_{R_0},\qquad \abs{h_{R_1}}\lesssim_A 1_{R_1},\qquad \Norm{ \wt{f}_{R_2}}{\infty} \lesssim A^{-1}\Norm{f}{\infty}.
	\end{align}
	We reserve the following notation for the variables: $x\in R_0,$ $y\in R_1,$ $z\in R_2$ and we denote
	\begin{align*}
	I(x,+) &= \{t\in\R: x+ te_1\in R_1\}, &I(y,-)&= \{t\in\R: y- te_1\in R_0\}, \\
	J(y,+)&= \{t\in\R: y+ te_2\in R_2\}, &J(z,-)&= \{t\in \R: z-e_2t\in R_1\}.
	\end{align*}
	Notice that $I(x,+),I(y,-)$ are intervals of length $\ell(I)$ containing the point $A\ell(I).$ Similarly, $J(y,+),	J(z,-)$ are intervals of length $\ell(J)$ containing the point $A\ell(J).$
	Fix a point $z\in R_2$ and write
	\begin{equation}\label{x}
	\begin{split}
		\wt{f}_{R_2}(z) &= H_{e_2}h_{R_1}(z) = \int_{J(z,-)}\frac{H_{e_1}h_{R_0}(z-e_2t)}{H_{e_2}^*g_{R_2}(z- e_2t)}\frac{\ud t}{t} \\
	&= \int_{J(z,-)}\frac{1}{H_{e_2}^*g_{R_2}(z- e_2t)}\int_{I(z-e_2t,-)}\frac{f(z-e_2t-e_1s)}{H_{e_1}^*g_{R_1}(z-e_2t-e_1s)}\frac{\ud s}{s}\frac{\ud t}{t} \\
	&= \int_{J(z,-)}\int_{I(z-e_2t,-)} \frac{ f(z-e_2t-e_1s)}{H_{e_2}^*g_{R_2}(z- e_2t)H_{e_1}^*g_{R_1}(z-e_2t-e_1s) }\frac{\ud s}{s}\frac{\ud t}{t}.
	\end{split}	
	\end{equation}
	Let $x\in R_0$ and $y\in R_1$ be arbitrary. Then, there holds that 
		\begin{align}\label{wdd}
	\frac{1}{A+1}\leq H_{e_1}^*g_{R_1}(x) = \int_{I(x,+)}\frac{\ud t}{t} \leq \frac{1}{A-1}
	\end{align}
	and
	\begin{align}\label{wd}
	\frac{1}{A+1}\leq H_{e_2}^*g_{R_2}(y) = \int_{J(y,+)}\frac{\ud t}{t} \leq \frac{1}{A-1}.
	\end{align}
	From \eqref{wd} and \eqref{wdd} it follows immediately that $\abs{h_{R_i}}\lesssim_A 1_{R_i},$ for $i=0,1,$ and hence for the claims on the line \eqref{error} it remains to check that  $\Norm{ \wt{f}_{R_2}}{\infty} \lesssim A^{-1}\Norm{f}{\infty}.$
	For arbitrary $t\in J(z,-)$ and $s\in I(z-e_2t,-),$ denoting
	\begin{align}\label{wddd}
		t' = A H_{e_2}^*g_{R_2}(z- e_2t)t,\qquad s' = AH_{e_1}^*g_{R_1}(z-e_2t-e_1s) s,
	\end{align}
	there holds that 
	\begin{align}\label{wdddd}
		\abs{t'-A\ell(J)}\lesssim \ell(J),\qquad \abs{s'-A\ell(I)}\lesssim \ell(I).
	\end{align}
	Let us briefly check the left estimate of \eqref{wdddd}.
	Assume e.g. that $A\ell(J)\leq t \leq (A+1)\ell(J),$ then by \eqref{wd} we find that
	\begin{align*}
			\abs{t'-A\ell(J)} &\leq \frac{A}{A-1}t-A\ell(J) \leq \frac{A+1}{A-1} A\ell(J)-A\ell(J) = \left(\frac{A+1}{A-1}A-A\right) \ell(J) \\
			&=		\frac{2A}{A-1}\ell(J) \lesssim \ell(J),
	\end{align*}
	whenever, say, $A>2.$ The other cases can be checked similarly.
	Now we come to the crucial part of the argument. The double integral in \eqref{x} is exactly over the rectangle $R_0,$ i.e., for all $z\in R_2$ there holds that
	\begin{align}\label{eq:crux1}
	R_0 = \{z-e_2t-e_1s:t\in J(z,-)\, , s\in I(z-e_2t,-)\}
	\end{align} 
	and hence that
	\begin{align}\label{aid5}
		\int_{J(z,-)}\int_{I(z-e_2t,-)}f(z-e_2t-e_1s)\ud s\ud t = \int_{R_0}f = 0.
	\end{align}
	By \eqref{aid5} we find that
	\begin{equation}\label{eq1}
	\begin{split}
	\eqref{x} =  A^2\int_{J(z,-)}\int_{I(z-e_2t,-)}f(z-e_2t-e_1s)\left(\frac{1}{t'\cdot s'}-\frac{1}{A\ell(J)\cdot A\ell(I)}\right)\ud s\ud t.
	\end{split}
	\end{equation}
	Now, applying the estimates on the line \eqref{wdddd}, that $t'\sim A\ell(J)$ and $s'\sim A\ell(I)$ (as $A$ is large, this is implied by \eqref{wdddd}), triangle inequality, and the mean value theorem (applied to $x\mapsto x^{-1}$ in the second passing), shows that
	\begin{equation*}
	\begin{split}
	\Babs{\frac{1}{t'\cdot s'}-\frac{1}{A\ell(J)\cdot A\ell(I)}} &\leq \frac{1}{t'}\Babs{\frac{1}{ s'}-\frac{1}{A\ell(I)}} + \frac{1}{ A\ell(I)}\Babs{\frac{1}{t'}-\frac{1}{A\ell(J)}} \\
	&\lesssim \frac{ (A\ell(I))^{-2}\ell(I)}{A\ell(J)} + \frac{(A\ell(J))^{-2}\ell(J)}{A\ell(I)}  \lesssim \frac{1}{A^3\ell(I)\ell(J)}.
	\end{split}
	\end{equation*}
	Plugging in the above estimates we continue from \eqref{eq1} and find that 
	\begin{align*}
	\abs{	\eqref{eq1}} \lesssim \Norm{f}{\infty}\int_{J(z,-)}\int_{I(z-e_2t,-)}\frac{1}{A\ell(I)\ell(J)}\ud s\ud t \leq A^{-1}\Norm{f}{\infty}
	\end{align*}
	and hence we have established \eqref{error}.
	
	Next, we repeat the above argument beginning from the function $\wt{f}_{R_2}.$ We denote $e_3 = -e_1$ and $e_0 = - e_2$ and write
	\begin{align}\label{decomp2}
	\wt{f}_{R_2} = \left[h_{R_2}H_{e_3}^*g_{R_3} - g_{R_3}H_{e_3}h_{R_2}\right] + \left[ h_{R_3}H_{e_0}^*g_{R_0}-g_{R_0}H_{e_0}h_{R_3}\right] + \wt{f}_{R_0},
	\end{align}
	where
	\begin{align*}
	h_{R_2} = \frac{\wt{f}_{R_2}}{H_{e_3}^*g_{R_3}},\quad g_{R_i} = 1_{R_i},\quad \wt{f}_{R_3} = g_{R_3}H_{e_3}h_{R_2},\quad h_{R_3} = \frac{\wt{f}_{R_3}}{H_{e_0}g_{R_0}},\quad \wt{f}_{R_0} = g_{R_0}H_{e_0}h_{R_3}.
	\end{align*}
	Again, this decomposition is well-defined.
	By moving the adjoints we find that
	\begin{align}\label{eq:zm}
	\int \wt{f}_{R_2} = 	\int \wt{f}_{R_1} = \int f = 0,
	\end{align}
	e.g. the second identity follows as
	\begin{align*}
	\int \wt{f}_{R_1} = \int g_{R_1}H_{e_1}\Big(\frac{f}{H_{e_1}^*g_{R_1}}\Big) = \int H_{e_1}^*g_{R_1}\frac{f}{H_{e_1}^*g_{R_1}} = \int f.
	\end{align*}
	Consequently, by similar arguments as above, we find that 
	\begin{align*}
	\abs{h_{R_i}}\lesssim_A 1_{R_i},\quad i=2,3,\qquad \Norm{\wt{f}_{R_0}}{\infty} \lesssim A^{-1}\Norm{\wt{f}_{R_2}}{\infty} \lesssim A^{-2}\Norm{f}{\infty}.
	\end{align*}
	
	Then, we dualize as on the line \eqref{eq:dualization} and factor according to the lines \eqref{decomp1} and \eqref{decomp2} to the extent that
 	\begin{align*}
	\int_{R_0}\abs{b-\ave{b}_{R_0}} &= \int b\sum_{i=1}^3\left[h_{R_{i-1}}H_{e_i}^*g_{R_i} - g_{R_i}H_{e_i}h_{R_{i-1}}\right] \\ 
		&\qquad\qquad+ \int b\left[ h_{R_3}H_{e_0}^*g_{R_0}-g_{R_0}H_{e_0}h_{R_3}\right]+ \int b\wt{f}_{R_0} \\
	&= -\int \sum_{i=1}^3  g_{R_i}[b,H_{e_i}]h_{R_{i-1}} \\
	&\qquad\qquad-\int g_{R_0}[b,H_{e_0}]h_{R_3} + \int (b-\ave{b}_{R_0})\wt{f}_{R_0} \\
		&\leq \sum_{i=1}^3\Norm{g_{R_i}}{L^{p'}}\bNorm{[b,H_{e_i}]}{L^p\to L^p} \Norm{h_{R_{i-1}}}{L^p} \\
	&\qquad\qquad + \Norm{g_{R_0}}{L^{p'}}\bNorm{[b,H_{e_0}]}{L^p\to L^p} \Norm{h_{R_3}}{L^p} + \Norm{\wt{f}_{R_0}}{\infty}\int_{R_0}\abs{b-\ave{b}_{R_0}} \\
	&\leq C_A\Big(\sum_{i=1}^3\bNorm{[b,H_{e_i}]}{L^p\to L^p} + \bNorm{[b,H_{e_0}]}{L^p\to L^p}  \Big)\abs{R_0} +  CA^{-1}\int_{R_0}\abs{b-\ave{b}_{R_0}} \\
	&\leq  C_A\sum_{i=1,2}\Norm{[b,H_{e_i}]}{L^p(\R^2)\to L^p(\R^2)}\abs{R_0} + CA^{-1}\int_{R_0}\abs{b-\ave{b}_{R_0}}, 
	\end{align*}
	 where in the final estimate we note that $H_{\sigma} = -H_{-\sigma},$ for any $\sigma\in\mathbb{S},$ especially then, $H_{e_3} = -H_{e_1},$ $H_{e_0} = -H_{e_2}$ so that
	$\Norm{[b,H_{e_3}]}{L^p\to L^p} = \Norm{[b,H_{e_1}]}{L^p\to L^p} $ and $\Norm{[b,H_{e_0}]}{L^p\to L^p} = \Norm{[b,H_{e_2}]}{L^p\to L^p}.$ To conclude, using the assumption $b\in L^1_{\loc},$ we choose $A$ large enough and absorb the common term shared on both sides to the left-hand side, then divide with $\abs{R_0}.$
\end{proof}

\section{Proof of Theorem \ref{thm:main1B}} 
Whereas Theorem \ref{thm:main2} was in a sense proved on the go, now, due to the parabola, the setup is more involved and we require a lengthier preparation. As the upper bound was already proved in \cite{Bongers2019commutators}, it remains to prove Theorem \ref{thm:main1}.

\subsection{Geometry behind the factorization}\label{ss:gf} 
\subsubsection{Setup for analysis: $Q,$ $W$ and $P$}
We fix a parabolic rectangle $Q = I\times J,$ i.e. $\ell(J) = \ell(I)^2.$ We work on a scale comparable to $\ell(I)$ and hence define the auxiliary interval
\begin{align*}
	 I_A = [\ell(I)A,\ell(I)(A+N)],\qquad A,N \geq 1.
\end{align*}
Then, we set
\begin{align*}
	P &= Q + (2A+N)\ell(I)e_1, \\
	\wt{Q} &= \big\{ x+\gamma(t): x\in Q,\, t\in I_A\big\},\qquad \wt{P} = \big\{z + \gamma(t): z\in P,\, t \in -I_A\big\}, \\ 
	W &= \wt{Q}\cap \wt{P}.
\end{align*}
The following Figure \ref{fig:all2} is a rough sketch of the sets $Q,\wt{Q},W,\wt{P},P,$ 
 when $A\sim 3, N\sim 7, \ell(I) \sim 2.$ 
\FloatBarrier
\begin{fig}[h]
	\centering
	\includegraphics[scale= 0.2]{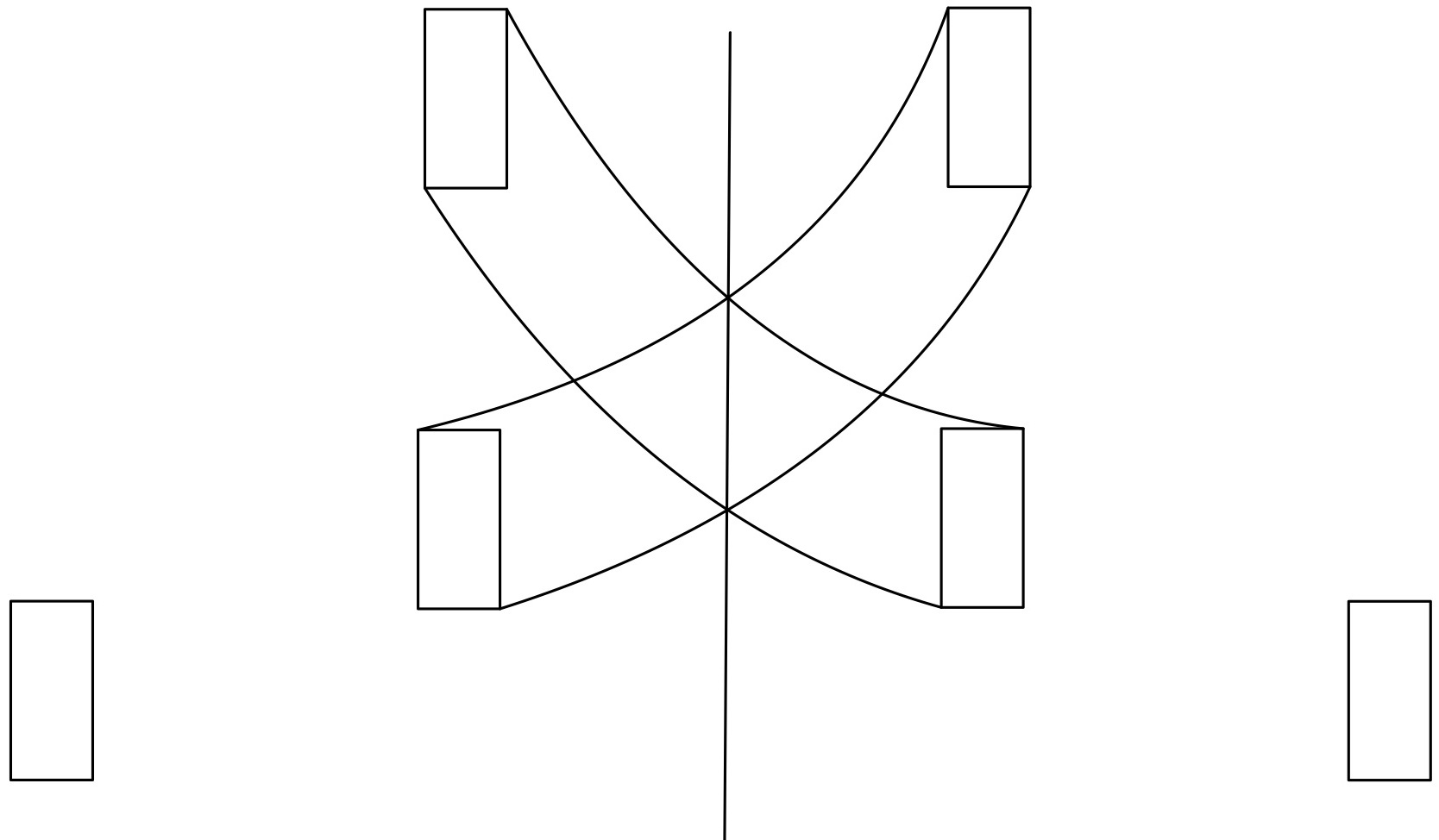}
	\caption{Setup for analysis}
	\label{fig:all2}
\end{fig}
%	For our arguments to work we can take any fixed $N\geq 1,$ however, choosing a slightly larger $N$ brings separation to the sets considered (contrast with the case $N=1$) and streamlines the geometry. We take $N= 7,$ then none of our estimates depend on $N.$ On the other hand, the parameter $A$ varies, it is the parameter that controls the geometry of the setup and it will appear in all of our estimates and results. In several places we demand $A$ to be large enough for some specific inequality to hold, then, everything to this extent depends on the parameter $A.$ As only finitely many such choices of $A$ are made, in the end we cover all cases by choosing $A$ large enough.  
For our arguments to work we can take any fixed $N\geq 1,$ and we take $N=1,$ however, considering a slightly larger $N$ brings separation to the sets considered and streamlines the geometry.

The setup is symmetric with respect to a reflection across the line in Figure \ref{fig:all2} that splits the set $W$ vertically in half.
Moreover, there holds that 
\begin{align}\label{size1}
\abs{Q}\sim_A \abs{\wt{Q}}  \sim  \abs{W} \sim  \abs{\wt{P}} \sim_A \abs{P}. 
\end{align}
The first estimate follows as $\wt{Q}$ contains a translate of $Q$ and by considering the size of the set $\wt{Q}$ in the $x_1$ and $x_2$ directions. The second follows as $W\subset\wt{Q}$ and $W$ contains a translate of $Q.$ That the second estimate is also independent of $A$ is a fact that we do not need, however, it is relatively clear from Lemma \ref{lem:geom1} below. The last two estimates are symmetric with the first two.

We denote
\begin{align}\label{denote}
	lb,lt,rb,rt,\qquad l = left,\quad r = right,\quad t= top,\quad b = bottom,\quad c = centre
\end{align}
and variables are reserved to be used as follows,
$
x\in Q,$ $y\in W,$ $z\in P.$
We also notate
\begin{align*}
I(a,\pm, B) = \{t\in\R : a \pm \gamma(t)\in B\}\subset \R,\qquad \phi(a,\pm,B) = \{a\pm\gamma(t)\in B: t\in \R\} \subset B.
\end{align*}
%where $a\in\R^2,$ $\pm \in \{+,-\}$ and $B\subset \R^2.$ 
The variable $a\in\R^2$ is the reference point, the sign $\pm \in \{+,-\}$ is either the plus or the minus sign and indicates direction, while the last variable is a set $B\subset \R^2.$

\begin{lem}\label{lem:geom12}  Let $Q = I\times J \in\calR_{\gamma}$ and $x\in Q.$ Then, there holds that   
	\begin{align*}
	\lim_{A\to\infty}\frac{\abs{I(x,+,W)}}{\ell(I)} = \frac{1}{2}
	\end{align*}
	with uniform convergence independent of the data $x,Q.$
\end{lem}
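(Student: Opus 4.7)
The plan is to sandwich $I(x,+,W)$ between two sets of known asymptotic measure. Writing $W = \wt Q \cap \wt P$, one has $I(x,+,W) = I(x,+,\wt Q) \cap I(x,+,\wt P)$. The inclusion $I_A \subset I(x,+,\wt Q)$ is trivial: for $t \in I_A$, the decomposition $x + \gamma(t) = x + \gamma(t)$ already exhibits the point with the data $x' = x \in Q$ and $s = t \in I_A$. Hence
\[
I_A \cap I(x,+,\wt P) \,\subset\, I(x,+,W) \,\subset\, I(x,+,\wt P),
\]
so the proof reduces to (a) computing $|I(x,+,\wt P)|$ and (b) verifying that $I(x,+,\wt P)$ lies inside $I_A$ up to an error going to zero with $1/A$.

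By translation I may assume $Q = [0,\ell(I)]\times[0,\ell(J)]$ and correspondingly $P = [(2A+1)\ell(I),(2A+2)\ell(I)] \times [0,\ell(J)]$. Then $t \in I(x,+,\wt P)$ is equivalent to the existence of $u = -s' \in I_A$ with
\[
t + u \in T_1(x) := [(2A+1)\ell(I) - x_1,\ (2A+2)\ell(I) - x_1] \quad \text{and} \quad t^2 - u^2 \in [-x_2,\ \ell(J) - x_2].
\]
The change of variables $p = t+u$, $q = t-u$ decouples the two conditions: $p$ is forced to lie in the interval $T_1(x)$ of length $\ell(I)$, while writing $t^2-u^2 = qp$ and using $p \sim 2A\ell(I)$ pins $q$ to an interval of length $O(\ell(J)/(A\ell(I))) = O(\ell(I)/A)$. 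The image of this thin strip under the linear map $t=(p+q)/2$ is therefore an interval of length $\tfrac{1}{2}|T_1(x)| + O(\ell(I)/A) = \ell(I)/2 + O(\ell(I)/A)$.

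Inspecting the endpoints shows that the halved interval $T_1(x)/2$ always lies in $I_A = [A\ell(I),(A+1)\ell(I)]$ for every $x_1 \in [0,\ell(I)]$: the left endpoint $(A+\tfrac12)\ell(I) - x_1/2$ ranges over $[A\ell(I), (A+\tfrac12)\ell(I)]$, while the right endpoint $(A+1)\ell(I) - x_1/2$ ranges over $[(A+\tfrac12)\ell(I), (A+1)\ell(I)]$. From this two facts follow: first, the remaining constraint $u = (p-q)/2 \in I_A$ is automatic modulo a $(p,q)$-set of measure $O(\ell(I)/A)$; second, $I(x,+,\wt P) \subset I_A$ up to the same error. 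Plugging into the bracketing yields $|I(x,+,W)| = \ell(I)/2 + O(\ell(I)/A)$ with constants independent of $x$ and $Q$; dividing by $\ell(I)$ and letting $A \to \infty$ proves the claim.

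The main technical care will be in confirming that the $O(\ell(I)/A)$ corrections are genuinely uniform in the relative position of $x$ inside $Q$. This is most delicate at the vertical edges of $Q$, where an endpoint of $T_1(x)/2$ touches the boundary of $I_A$: the small $q$-perturbation can push $I(x,+,\wt P)$ a distance of order $\ell(I)/A$ outside $I_A$, and one must check that this loss is absorbed into the uniform error rather than giving a non-vanishing contribution.
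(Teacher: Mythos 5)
Your proposal is correct, but it proves the lemma by a genuinely different route than the paper. The paper argues directly on the boundary of $W$: it takes the first parameter $t_x$ with $x+\gamma(t_x)\in\partial W$, matches that point with the translated parabola through the bottom vertices of $P$, and uses only the derivative bounds $2A\ell(I)\le|\gamma_2'(t)|\le 2(A+N)\ell(I)$ on $\pm I_A$ together with a left--right symmetry comparison to trap the exit parameter, yielding $(\tfrac12-\vare)\ell(I)\le|I(x,+,W)|\le(\tfrac12+\vare+\tfrac1{2A})\ell(I)$. You instead observe $I_A\subset I(x,+,\wt Q)$, sandwich $I_A\cap I(x,+,\wt P)\subset I(x,+,W)\subset I(x,+,\wt P)$, and compute $I(x,+,\wt P)$ explicitly via the substitution $p=t+u$, $q=t-u$, using the factorization $t^2-u^2=pq$ to decouple the two coordinate constraints; your endpoint check $\tfrac12 T_1(x)\subset I_A$ is right, and the flagged delicate point does resolve: for $t\in\tfrac12 T_1(x)$ at distance at least $\delta=\tfrac{\ell(I)}{4(A+1)}$ from its endpoints one may choose $q$ with $|q|\le\delta$ and with the sign dictated by whichever of $[-x_2,0]$, $[0,\ell(J)-x_2]$ has length $\ge\ell(J)/2$, so that all three constraints ($p\in T_1(x)$, $pq\in[-x_2,\ell(J)-x_2]$, $u=t-q\in I_A$) hold simultaneously, and only an $O(\ell(I)/A)$ neighbourhood of the endpoints is lost; the upper bound $|I(x,+,\wt P)|\le\tfrac{\ell(I)}2+\tfrac{\ell(I)}{2A}$ follows since $|q|\le\ell(J)/p\le\ell(I)/(2A)$. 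One caution: your intermediate phrase that the constraint $u\in I_A$ fails only on a $(p,q)$-set of measure $O(\ell(I)/A)$ is not by itself conclusive (small measure in $(p,q)$ need not control the measure of the lost $t$-projection); it is the fibrewise choice of $q$ just described that makes the lower bound rigorous. What each approach buys: yours is more elementary and gives explicit, fully uniform $O(\ell(I)/A)$ errors, but it leans on the algebraic identity $t^2-u^2=(t-u)(t+u)$ special to the parabola, whereas the paper's argument uses only size bounds on $\gamma_2'$ and is the version that transports to the monomial curves treated later (Theorem \ref{thm:main1gen}), where $|t|^{\beta}-|u|^{\beta}$ admits no such factorization.
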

\begin{proof} Let $x\in Q$ be arbitrary and let $t_x\sim A\ell(I)$ be the smallest number such that $x+\gamma(t_x)\in\partial W.$ 
	Let $-s_x\sim A\ell(I)$ be the point such that  $x+\gamma(t_x) = v_{lb}+\gamma(s_x),$
	where $v_{lb}$ is the left bottom vertex of $P.$
	Define the auxiliary point $y_x = x+\gamma(t_x)+\ell(I)e_1.$ Then, there holds that
	$y_x = v_{rb} +\gamma(s_x),$ where $v_{rb}$ is the right bottom vertex of $P.$
	
	Next, we show that for each $0<\vare<\frac{1}{2},$ there exists $A$ large enough (independent of $\ell(I)$) so that
	\begin{align}\label{auxe}
	\pi_2(x+\gamma(t_x+(\frac{1}{2}-\vare)\ell(I))) < \pi_2(v_{rb} + \gamma(s_x-\frac{\ell(I)}{2})).
	\end{align}
	%Notice that clearly $ \pi_1(	x+\gamma(t))< \pi_1(v_{rb}+\gamma(s))$ for all $t\in I_1(\vare)$ and $s\in I_2(0).$
	To achieve \eqref{auxe}, we choose $A$ so large that 
	\begin{align}\label{auxcc}
	(\frac{1}{2}-\vare)\ell(I)\cdot 2(A+N)\ell(I) < \frac{1}{2}\ell(I)\cdot 2A\ell(I),
	\end{align}
	clearly $A$ as chosen on the line \eqref{auxcc} is independent of $\ell(I).$ Then, as 
	\[
	2A\ell(I)\leq\abs{\gamma_2'(t)}\leq 2(A+N)\ell(I),\qquad t\in -I_A\cup I_A
	\]
	and $\pi_2(x+\gamma(t_x)) = \pi_2(v_{rb}+\gamma(s_x)),$ \eqref{auxe} follows.
	By symmetry, 
	\begin{align}\label{auxd}
	\pi_2(x+\gamma(t_x+\frac{\ell(I)}{2}) > \pi_2(v_{rb} + \gamma(s_x-(\frac{1}{2}-\vare)\ell(I))).
	\end{align}
	
	Then, let $0<-u_x,v_x < \ell(I)$ be the unique points such that $x+\gamma(t_x+v_x) = v_{rb}+\gamma(s_x+u_x).$ The line \eqref{auxe} shows that $v_x \geq (\frac{1}{2}-\vare)\ell(I).$ 
	Indeed, assume for contradiction that $v_x < (\frac{1}{2}-\vare)\ell(I).$ Then by $v_x-u_x = \ell(I)$ necessarily $-u_x > s_x-\frac{\ell(I)}{2}$ and hence by \eqref{auxe}
	\[
	\pi_2\big(x+\gamma(t_x+v_x)\big) <  \pi_2\big(v_{rb} + \gamma(s_x-\frac{\ell(I)}{2})\big) < \pi_2\big(v_{rb} + \gamma(s_x+u_x)\big),
	\]
	which contradicts $x+\gamma(t_x+v_x) = v_{rb}+\gamma(s_x+u_x).$
	Similarly, from \eqref{auxd} it follows that $u_x \leq -(\frac{1}{2}-\vare)\ell(I)$. 
	Using $v_x - u_x = \ell(I),$ it follows that
	\begin{align}\label{aidd}
	-u_x,v_x\in [(\frac{1}{2}-\vare)\ell(I), (\frac{1}{2}+\vare)\ell(I)].
	\end{align} 
	Notice that  
	\begin{align}\label{odour}
	x+\gamma(t_x+v_x + \frac{\ell(I)}{2A})\not\in \wt{P}.
	\end{align}
	Indeed, \eqref{odour} follows from the information
		\[
		v_{rb}+\gamma(s_x+u_x) = x+\gamma(t_x+v_x),\qquad \abs{\gamma_2'(t)} > 2A\ell(I),
		\]
		which implies that
		\[
		x+\gamma(t_x+v_x+\frac{\ell(I)}{2A})\not\in \gamma(s_x+u_x+h) + P,\qquad h \geq 0,  
		\]
		along with the obvious fact that 
		\[
		x+\gamma(t_x+v_x+\frac{\ell(I)}{2A})\not\in \gamma(s_x+u_x+h) + P,\qquad h < 0.
		\]
	%Hence, we write
	%\begin{align}\label{aid}
	%	v_x = \abs{I(x,+,W)}+ p_x,\qquad \abs{p_x}\lesssim \frac{\ell(I)}{A}.
	%\end{align} 
	From \eqref{aidd} and \eqref{odour} we find that
	\begin{align}\label{auxr}
	(\frac{1}{2}-\vare)\ell(I) \leq  \abs{I(x,+,W)} \leq (\frac{1}{2}+\vare+\frac{1}{2A})\ell(I).
	\end{align}
	Clearly \eqref{auxr} implies the claim.
\end{proof}

Lemma \ref{lem:geom12} immediately gives as a corollary the following Lemma \ref{lem:geom1}.
\begin{lem}\label{lem:geom1}	
Let $x\in Q$ be arbitrary. Then,
\begin{align*}
	\abs{I(x,+,W)}\sim \ell(I).
\end{align*}
Also, there holds that 
\begin{align*}
\lim_{A\to\infty}\sup_{\substack{Q\in\calR_{\gamma} \\ x,x'\in Q }} \frac{\abs{I(x,+,W)}}{\abs{I(x',+,W)}} = 1.
\end{align*}
\end{lem}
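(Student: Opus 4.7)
The plan is to derive both statements directly from the uniform convergence established in Lemma \ref{lem:geom12}, which is the substantive result; Lemma \ref{lem:geom1} is essentially a repackaging of that uniform limit into forms convenient for later use. There is no serious obstacle here, so I will not expect to do any new geometric work.

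First I would unpack the uniform convergence: given $\vare>0$, there exists $A_0=A_0(\vare)$, depending only on $\vare$ (not on $Q$ or $x$), such that for every $A\geq A_0$, every parabolic rectangle $Q=I\times J\in\calR_{\gamma}$, and every $x\in Q$,
\[
\left|\,\frac{\abs{I(x,+,W)}}{\ell(I)}-\frac{1}{2}\,\right|<\vare.
\]
Applying this with a fixed choice such as $\vare=1/4$ yields
\[
\tfrac{1}{4}\ell(I)\,\leq\,\abs{I(x,+,W)}\,\leq\,\tfrac{3}{4}\ell(I)
\]
for all $A\geq A_0(1/4)$, which is exactly the qualitative bound $\abs{I(x,+,W)}\sim \ell(I)$ claimed. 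The large-parameter convention from the paper (see the notation subsection, where $A\gg 1$) means we are always in this regime, so the first statement is immediate.

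For the second statement, I would apply the uniform bound twice — once to $x$ and once to $x'\in Q$ — and divide. For every $\vare\in(0,1/2)$ and every $A\geq A_0(\vare)$,
\[
\sup_{\substack{Q\in\calR_{\gamma}\\ x,x'\in Q}}\frac{\abs{I(x,+,W)}}{\abs{I(x',+,W)}}\,\leq\,\frac{(1/2+\vare)\ell(I)}{(1/2-\vare)\ell(I)}\,=\,\frac{1/2+\vare}{1/2-\vare}.
\]
The right-hand side tends to $1$ as $\vare\to 0^+$, so letting $A\to\infty$ (which allows us to take $\vare$ arbitrarily small along the way) forces the supremum to $1$ as well. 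Since the supremum is always at least $1$ (take $x=x'$), the limit equals $1$ exactly, completing the proof. The whole argument is a short pair of estimates; the real work was done in establishing Lemma \ref{lem:geom12}.
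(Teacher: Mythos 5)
Your proposal is correct and follows exactly the paper's route: the paper derives Lemma \ref{lem:geom1} as an immediate corollary of the uniform convergence in Lemma \ref{lem:geom12}, which is precisely what you do by fixing $\vare$ to get the two-sided bound $\abs{I(x,+,W)}\sim\ell(I)$ and by taking the ratio of the two-sided bounds for $x$ and $x'$ to get the limit $1$. No gaps; the real content is indeed in Lemma \ref{lem:geom12}.
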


Towards the next lemma define the reference rectangles 
$$
R(r) = [0,2^{-(r-1)}\frac{\ell(I)}{A}]\times [0, 2^{-r}\ell(I)^2],\qquad 1\leq r <\infty.
$$ Then, we set 
\begin{align*}
	P^{lb}(r) = v_{lb} + R(r),\qquad P^{rt}(r) = v_{rt} + \Dil_{-1}R(r),\qquad 	P^{lb}(r),P^{rt}(r)\subset P
\end{align*}
and define
\begin{equation}\label{K}
	\begin{split}
		\Delta_r(P^{lb}) &=  \overline{\partial P^{lb}(r)\setminus\partial P}, \qquad  
	\Delta_r(P^{rt})  =\overline{\partial P^{rt}(r)\setminus\partial P}, \\ 
	P^c &= P\setminus\bigcup_{r>1}\big(\Delta_r(P^{lb}) \cup 	\Delta_r(P^{rt})\big).
	\end{split}
\end{equation}

Notice that $P^c = P\setminus\left(P^{lb}(1)\cup P^{rt}(1)\right).$
If $z\in P^c$ ($c$ for centre) and $y\in\phi(z,+,W),$ then $\babs{I(y,-,P)}\sim \frac{\ell(I)}{A}$ (relatively clear, also, see Lemma \ref{lem:geom2} below). The following Lemma \ref{lem:geom2} shows that the sets $	\Delta_r(P^{lb}),	\Delta_r(P^{rt})$ exactly quantify this same statement for points situated towards the vertices $v_{lb}, v_{rt}$ of $P.$

\begin{lem}\label{lem:geom2} 
	Let $z\in \Delta_r(P^{lb})\cup \Delta_r(P^{rt})$ and $y\in\phi(z,+,W).$ Then, 
	\begin{align*}
	\abs{I(y,-,P)}&\sim 2^{-r}\frac{\ell(I)}{A}.
	\end{align*}
	Let $z\in P^c$ and $y\in\phi(z,+,W),$ then $	\abs{I(y,-,P)}\sim \frac{\ell(I)}{A}.$
\end{lem}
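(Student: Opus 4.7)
My plan is to translate the length $\abs{I(y,-,P)}$ into an explicit formula in the affine coordinates of $z$ in $P,$ after which both claims reduce to a short case analysis.

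Writing $P = [a,a+\ell(I)]\times[b,b+\ell(I)^2]$ and $z = (a+\alpha\ell(I),\,b+\beta\ell(I)^2)$ with $(\alpha,\beta)\in[0,1]^2,$ the assumption $y\in\phi(z,+,W)$ supplies $t_0\in\R$ with $y = z+\gamma(t_0);$ using $y\in W\subset\wt P$ and the horizontal projection one easily sees $\abs{t_0}\sim A\ell(I).$ Setting $\tau = t-t_0,$ the identity $\gamma(t_0+\tau)-\gamma(t_0) = (\tau,\,2t_0\tau+\tau^2)$ turns the constraint $y-\gamma(t)\in P$ into a horizontal piece $\tau\in[(\alpha-1)\ell(I),\,\alpha\ell(I)]$ and a vertical piece $g(\tau):=2t_0\tau+\tau^2\in [-(1-\beta)\ell(I)^2,\,\beta\ell(I)^2].$ Since $g$ is strictly decreasing on $(-\infty,-t_0)$ and every admissible $\tau$ satisfies $\abs{\tau}\lesssim \ell(I)/A$ already from the vertical constraint, the error $\abs{\tau^2/(2t_0\tau)} = \abs{\tau/(2t_0)}\lesssim A^{-2}$ is negligible and the linearised vertical constraint reduces to $\tau\in[-\beta\ell(I)^2/(2\abs{t_0}),\,(1-\beta)\ell(I)^2/(2\abs{t_0})].$ Intersecting the two intervals yields
\begin{align*}
\abs{I(y,-,P)}\sim \ell(I)\Bigl[\min\Bigl(\alpha,\frac{1-\beta}{2A}\Bigr) + \min\Bigl(1-\alpha,\frac{\beta}{2A}\Bigr)\Bigr].
\end{align*}

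Both estimates now follow by inspection. For $z\in\Delta_r(P^{lb})$ the definition forces either $\alpha = 2^{1-r}/A$ with $\beta\in[0,2^{-r}],$ or $\beta = 2^{-r}$ with $\alpha\in[0,2^{1-r}/A];$ substitution gives the upper bound $(2^{1-r}+2^{-r-1})\ell(I)/A\lesssim 2^{-r}\ell(I)/A,$ and the pinned coordinate supplies a matching lower bound $\gtrsim 2^{-r}\ell(I)/A$ in the corresponding $\min$-term. The case $z\in\Delta_r(P^{rt})$ is immediate from the symmetry $(\alpha,\beta)\mapsto(1-\alpha,1-\beta),$ which preserves the length formula. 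For $z\in P^c$ the defining condition $z\notin P^{lb}(1)\cup P^{rt}(1)$ reads ``$(\alpha>1/A$ or $\beta>1/2)$ and $(\alpha<1-1/A$ or $\beta<1/2),$'' and a split on whether $\beta\leq 1/2$ or $\beta>1/2$ shows that at least one of the two $\min$-terms is of size $\sim 1/(2A),$ while the trivial upper bound $\abs{I(y,-,P)}\leq \ell(I)/(2A)$ is built into the formula.

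The main obstacle I foresee is the degenerate corners of $P^c$ where, e.g., $\alpha$ is very close to $0$ and $\beta$ is simultaneously close to $1,$ so that only one of the two $\min$-terms contributes meaningfully. The decisive observation is that the defining constraint of $P^c$ rules out the joint degeneracy: if $\alpha<1/A$ then $\beta>1/2$ is forced by $z\notin P^{lb}(1),$ which in turn guarantees $\min(1-\alpha,\beta/(2A))\sim 1/(2A).$ Besides this, the only technical care needed is the linearisation of $g,$ which is cheap because the admissible window for $\tau$ has length at most $\ell(I)/(2A)\ll\abs{t_0}.$
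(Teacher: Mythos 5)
Your proof is correct, and it is complete -- I checked the coordinate formula and every case. Both your argument and the paper's rest on the same geometric mechanism: along the relevant arc the parameter is $\sim A\ell(I)$, so the vertical speed of $t\mapsto y-\gamma(t)$ is $\sim A\ell(I)$, and a vertical budget $h\,\ell(I)^2$ inside $P$ costs parameter $\sim h\,\ell(I)/A$ while a horizontal budget is paid at unit rate. The executions differ, though. The paper fixes one subcase of $z\in\Delta_r(P^{lb})$, moves along the curve from $z$ and verifies coordinatewise that it stays in $P$ for parameter length $c\,2^{-(r-1)}\ell(I)/A$; it writes out essentially only this lower-bound containment, leaving the other configurations, the matching upper bounds, and the $P^c$ statement (called ``relatively clear'') to the reader. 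You instead linearize $\gamma(t_0+\tau)-\gamma(t_0)$, which is legitimate because admissible $\tau$ obey $\abs{\tau}\lesssim \ell(I)/A\ll\abs{t_0}$, and obtain the single two-sided formula $\abs{I(y,-,P)}\sim \ell(I)\bigl[\min\bigl(\alpha,\tfrac{1-\beta}{2A}\bigr)+\min\bigl(1-\alpha,\tfrac{\beta}{2A}\bigr)\bigr]$ valid for every $z\in P$, from which all three claims (both $\Delta_r$ families, via the symmetry $(\alpha,\beta)\mapsto(1-\alpha,1-\beta)$, and the $P^c$ case, where the defining condition of $P^c$ exactly excludes the degenerate corner) follow by inspection, with upper and lower bounds simultaneously and with constants uniform in $Q$, $A$ and $r$. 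What your route buys is completeness and a reusable quantitative statement; what the paper's buys is brevity. Two lines would make yours airtight: state that the monotonicity of $g(\tau)=2t_0\tau+\tau^2$ and the bound $\abs{\tau}\lesssim\ell(I)/A$ are used on the window $\abs{\tau}\le\ell(I)$ coming from the horizontal constraint (this discards the second solution branch of the quadratic near $\tau\approx 2\abs{t_0}$), and record explicitly that $t_0<0$ (forced by comparing $\pi_1(W)$ with $\pi_1(P)$), since the sign of $t_0$ is what pairs the left budget $\alpha$ with the top budget $1-\beta$ and the right budget $1-\alpha$ with the bottom budget $\beta$ in your two minima -- as you indeed have them.
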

\begin{proof}
	Let $z\in \Delta_r(P^{lb})$ for some $r> 1.$ Then, either
	$$
	\pi_1(z-v_{lb}) = 2^{-(r-1)}\frac{\ell(I)}{A}\quad \mbox{ or }\quad  \pi_2(z-v_{lb}) = 2^{-r}\ell(I)^2
	$$
	holds.
	Assume first that $\pi_1(z-v_{lb}) = 2^{-(r-1)}\frac{\ell(I)}{A}.$
	As  $y\in\phi(z,+,W),$ there exists $s\sim -A\ell(I)$ so that $y-\gamma(s) = z\in P.$ The claim will follow if we show the following: there exists an absolute constant $c>0$ so that
	\begin{align}\label{ic}
			\pi_i(y-\gamma(s+h)) \in \pi_i(P),\qquad h\in (0,c2^{-(r-1)}\frac{\ell(I)}{A}),\qquad i=1,2.
	\end{align}
	The case $i=1$ is an immediate consequence of the following information
	\begin{align*}
		\pi_1(y-\gamma(s+h)) = z_1-h,\qquad z_1-\pi_1(v_{lb}) = 	2^{-(r-1)}\frac{\ell(I)}{A}\qquad h\in (0,c2^{-(r-1)}\frac{\ell(I)}{A}),
	\end{align*}
	as long as we choose $c$ small enough.  For the case $i=2$, we note that $(s,s+h)\subset -2I_A$ is an interval of length $h$ and hence 
	%Lemma \ref{lem:size} ($A\ell(I)\geq 1$) shows that 
	for some absolute constant $c_1>0$ there holds that
	\begin{align}\label{aa}
		\ell(\pi_2(\gamma(s,s+h))) \leq c_1hA\ell(I) \leq c_1c2^{-(r-1)}\frac{\ell(I)}{A}A\ell(I) \leq \frac{1}{2}\ell(I)^2,
	\end{align}
	as long as we choose $c$ small enough.
	The inequality \eqref{aa} implies that $\pi_2(y-\gamma(s+h)) \leq \pi_2(v_{lt}).$ Also clearly  $\pi_2(y-\gamma(s+h)) \geq \pi_2(v_{lb}).$ Together these show that $\pi_2(y-\gamma(s+h)) \in \pi_2(P)$  and so we have also checked the case $i=2$ on the line \eqref{ic}. 
	
	The case $ \pi_2(z-v_{lb}) = 2^{-r}\ell(I)^2$ and then $z\in \Delta_r(P^{rt})$ are handled in very much the same way and we leave the details to the reader.
\end{proof}

\subsubsection{Auxiliary functions}\label{sect:auxf}
Recall, that a fixed parabolic rectangle $Q\in\calR_{\gamma}$ and the parameters $A,N$ determine the sets $W (=W(Q)),$ $P ( = P(Q)).$  During the factorization we will  in total make use of four auxiliary functions, the first two are particularly simple, $g_Q = 1_Q,$ $g_P = 1_P.$ 
The other two functions are supported on the set $W,$ more precisely, we find two collections of functions, the first one being $\{g_W\}_{Q\in\calR_{\gamma}},$ and we show that the following three conditions are met.
\begin{enumerate}[$(i)$]
	\item There holds that
	\begin{align}\label{q0}
	1_{W^c}g_W = 1_{W^c},\qquad W^c= \big\{y\in W: \exists z\in P^c: y\in \phi(z,+,W)\big\}.
	\end{align}
	\item There holds that
	\begin{align}\label{q00}
	 g_W(y)\sim\abs{I(y,-,P)}\frac{A}{\ell(I)},
	\end{align}
	where the implicit constants do not depend on $y,Q.$ 
	\item  There holds that 
%	\begin{align}\label{q1}
%	\lim_{A\to\infty}\sup_{\substack{ Q\in\calR_{\gamma} \\ z\in P \\  t,t'\in I(z,+,W) }}\Big(\frac{g_W(z+\gamma(t))}{g_W(z+\gamma(t'))}-\frac{g_W(z+\gamma(t'))}{g_W(z+\gamma(t))}\Big)  = 0
%	\end{align}
	\begin{align}\label{q1}
		\lim_{A\to\infty}\sup_{\substack{ Q\in\calR_{\gamma} \\ z\in P \\  t,t'\in I(z,+,W) }}\frac{g_W(z+\gamma(t))}{g_W(z+\gamma(t'))} = 	 1.
		%\lim_{A\to\infty}\inf_{\substack{ Q\in\calR_{\gamma} \\ z\in P \\  t,t'\in I(z,+,W) }}\frac{g_W(z+\gamma(t))}{g_W(z+\gamma(t'))} =
	\end{align}
%	 The identity $(=1)$ is superfluous as it follows from the first one.
\end{enumerate}
The reader who feels comfortable with the existence of such a family $\{g_W\}_{Q\in\calR_{\gamma}}$ may immediately skip to Section \ref{sect:awf}. 

Next we explicitly define the functions $g_W.$ 
Let $\eta_Q\geq 0$ be the smallest constant so that the following is a partition,
\begin{align*}
	W = \bigcup_{s\in [-\eta_Q,\infty)}W(s),\qquad W(s) = \Big\{y\in W: 	\abs{I(y,-,P)} = 2^{-s}\frac{\ell(I)}{A} \Big\}.
\end{align*}
Lemma \ref{lem:geom2} implies that $\sup_{Q\in\calR_{\gamma}}\eta_Q<\infty,$ a fact worth noting, which, however, we do not need anywhere.
Then, we define
\begin{align*}
	\varphi:W\times\R_+\to\R_+,\qquad \varphi(y,M) = 
	%\begin{cases} 1&\quad y\in \cup_{s\in (\eta_Q,M)}W(s) \\ 2^{-(s-M)}&\quad y\in \cup_{s\in [M,\infty)}W(s)  \end{cases}
	\sum_{-\eta_Q\leq s < M} 1_{W(s)}(y) +  \sum_{s\geq M}1_{W(s)}(y)2^{-(s-M)}.
\end{align*}
By the following Lemma \ref{auxfun} we choose $M$ large enough and define 
\begin{align*}
	g_W(\cdot) = \varphi(\cdot,M).
\end{align*}
\begin{lem}\label{auxfun} There exists $M\in\R_+$ so that $\varphi(\cdot,M)$ satisfies the points $(i),(ii)$ and $(iii).$
\end{lem}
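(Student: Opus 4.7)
The plan is to verify the three conditions in turn, with (i) and (ii) following directly from the definitions once one notes that both $\eta=\sup_Q\eta_Q$ and the threshold $M$ I shall choose are bounded absolute constants; condition (iii) is the real content and requires a quantitative comparison of the two interval lengths $\abs{I(y,-,P)}$ and $\abs{I(y',-,P)}$ associated to two nearby points $y,y'\in W$ on the same image parabola.

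For (i) and (ii) I would proceed simultaneously. Every $y\in W^c$ is by definition of the form $z+\gamma(t)$ for some $z\in P^c$, so Lemma \ref{lem:geom2} gives $\abs{I(y,-,P)}\sim \ell(I)/A$ uniformly. Hence the index $s$ for which $y\in W(s)$ satisfies $2^{-s}\sim 1$, i.e.\ $s\le C_0$ for some absolute $C_0$; fixing any $M>C_0$ forces $\varphi(y,M)=1$ on $W^c$, settling (i). For (ii) the definition of $W(s)$ immediately gives $\abs{I(y,-,P)}\cdot A/\ell(I)=2^{-s}$, while $\varphi(y,M)=1$ for $s\in[-\eta,M)$ (a uniformly bounded range) and $\varphi(y,M)=2^{M-s}$ for $s\ge M$; in either case $\varphi(y,M)\sim 2^{-s}$.

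The heart of the proof is (iii). Given $z\in P$ and $t,t'\in I(z,+,W)$, set $y=z+\gamma(t)$, $y'=z+\gamma(t')$, and substitute $s=t+v$ in the condition $y-\gamma(s)\in P$; a short computation with $\gamma(t)=(t,t^2)$ reduces this condition to
\[
v\in\pi_1(z-P),\qquad 2tv+v^2\in\pi_2(z-P),
\]
with the analogous description for $I(y',-,P)$ obtained by replacing $t$ by $t'$. Since $|t|,|t'|\sim A\ell(I)$ with $|t-t'|\lesssim \ell(I)$, the two monotone maps $v\mapsto 2tv+v^2$ and $v\mapsto 2t'v+v^2$ have derivatives of size $\sim 2A\ell(I)$ differing only by a factor $1+O(1/A)$. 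Consequently the preimages of $\pi_2(z-P)$ under these maps are intervals of length $\sim \ell(I)/A$ whose endpoints, computed explicitly, differ by $O(\ell(I)/A^2)$, while the first-coordinate cut $v\in\pi_1(z-P)$ depends only on $z$. A short case analysis on whether this cut is slack or binding then yields
\[
\frac{\abs{I(y,-,P)}}{\abs{I(y',-,P)}}=1+O(1/A)
\]
uniformly in $Q,z,t,t'$. Writing $y\in W(s)$, $y'\in W(s')$ this gives $|s-s'|=O(1/A)$, and a final case check on which side of $M$ the indices $s,s'$ lie yields $\varphi(y,M)/\varphi(y',M)\to 1$ as $A\to\infty$, uniformly.

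The main obstacle will be controlling the ratio in the corner regime, where $\abs{I(y,-,P)}$ itself may be much smaller than $\ell(I)/A$ so that the absolute endpoint error $O(\ell(I)/A^2)$ threatens to dominate the denominator. The key algebraic observation is that when the first-coordinate cut is active, the length takes the explicit form $\delta_1+\delta_2/(2|t|)$, where $\delta_1,\delta_2\ge 0$ measure the distances from $z$ to the two edges of $P$ at the active corner; the change between $t$ and $t'$ then has the form $\delta_2(|t|-|t'|)/(2|t||t'|)$, and the denominator $\delta_1+\delta_2/(2|t'|)$ always dominates this change by a factor of at least $A$, so the bound $1+O(1/A)$ persists uniformly in $Q,z,t,t'$.
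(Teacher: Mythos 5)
Your argument is correct and shares the paper's architecture: (i) and (ii) are read off from Lemma \ref{lem:geom2} and the definition of $\varphi(\cdot,M)$ exactly as in the paper, and (iii) is reduced to the uniform comparability of the fibre lengths, namely that $\abs{I(z+\gamma(t),-,P)}/\abs{I(z+\gamma(t'),-,P)}\to 1$ as $A\to\infty$ uniformly in $Q$, $z\in P$, $t,t'\in I(z,+,W)$, which is precisely the paper's Lemma \ref{fact}; the passage from this to the ratio of $\varphi$-values via $\abs{s-s'}=O(1/A)$ and a case check against $M$ is also as in the paper. Where you genuinely diverge is in the proof of that key comparability. The paper replaces each parabolic intersection by the intersection of a secant line $f_{z,t_h}(s)=z+s(1,-t_h)$, $t_h\in[2A\ell(I),2(A+N)\ell(I)]$, with $P$, and then runs a case analysis over which edges of $P$ the lines exit, treating the corner configuration through the identity $\abs{f_{z,s_h}^{-1}(P)}\,s_h=\pi_2(v_{rt}-e_s)$. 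You instead substitute $s=t+v$ and work with the explicit quadratic, which (up to multiplicative errors $1+O(1/A^2)$ from the $v^2$ term) exhibits the length as a sum of two terms of the form $\min\big(d_{\mathrm{vert}},\,d_{\mathrm{hor}}/(2\abs{t})\big)$, with $d_{\mathrm{vert}},d_{\mathrm{hor}}$ distances from $z$ to the edges of $P$; since the $t$-dependence enters only through the factors $1/(2\abs{t})$ and $\abs{t},\abs{t'}\in[(A-1)\ell(I),(A+N+1)\ell(I)]$ with $\abs{t-t'}\lesssim\ell(I)$, the ratio is $1+O(1/A)$ uniformly, including in the corner regime where the length may be far smaller than $\ell(I)/A$ and an additive endpoint bound $O(\ell(I)/A^2)$ would be useless. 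That corner regime is the real danger point of the lemma, and you both flag it and resolve it correctly; your resolution is in substance an algebraic version of the paper's geometric one (both exploit that the length scales like a vertical extent divided by a slope comparable to $2A\ell(I)$), so the two routes deliver the same uniformity, yours being more computational and self-contained at the level of formulas, the paper's more geometric and shorter to state.
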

%It remains to check that the function $\varphi(\cdot, M)$ satisfies the points $(i),(ii),(iii),$ for a choice of $M$ large enough.
\begin{proof}	
	If $y\in W^c,$ then by Lemma \ref{lem:geom2}, $\abs{I(y,-,P)}\sim \frac{\ell(I)}{A}$ and hence $y\in W(s_y)$ for some $s_y\geq-\eta_Q.$ Clearly $\sup_{Q\in\calR_{\gamma}}\sup_{y\in W^c}s_y < \infty.$ Consequently, for a choice of $M$ large enough, $W^c\subset \cup_{s\in [\eta_Q,M)}W(s)$ and the point $(i)$ follows from the definition of $\varphi(\cdot,M).$
	
	By definition
	$$
	\abs{I(y,-,P)}\frac{A}{\ell(I)} \sim_{M} \varphi(y,M),
	$$ 
	hence $\varphi(\cdot,M)$ satisfies the point $(ii)$ with any choice of $M\geq 1.$ 
	
	Lastly, we check the point $(iii)$. Fix $z\in P$. If $\phi(z,+,W)\subset \cup_{s\in[\eta_Q,M )}W(s),$ then $\varphi(z+\gamma(t),M) = 1$ for all $t\in I(z,+,W)$ and the claim is clear. If $\phi(z,+,W)\not\subset  \cup_{s\in[\eta_Q,M )}W(s),$ then $\varphi(\phi(z,+,W),M) = \varphi(\phi\big(z,+,\big[\cup_{s\in[M,\infty)}W(s)\big]\big),M),$ i.e.  the function $\varphi(\cdot,M)$ already attains all possible values on the set $\phi\big(z,+,\big[\cup_{s\in[M,\infty)}W(s)\big]\big).$
	Then, as 
	\begin{align*}
			\varphi(z+\gamma(t),M) = \abs{I(z+\gamma(t),-,P)}\frac{A}{\ell(I)}2^{M},\qquad t\in I\big(z,+,\cup_{s\in[M,\infty)}W(s)\big),
	\end{align*}
	the claim follows from Lemma \ref{fact} below.
	%	If $\phi(z,+,W)\subset \cup_{s=-\eta_Q}^MW(s)$ then $\varphi(z+\gamma(t),M) = 1$ for all $t\in I(z,+,W)$ and the claim is clear. If $\phi(z,+,W)\subset \cup_{s\geq M}W(s),$ then
	%	\begin{align*}
	%		\varphi(z+\gamma(t),M) = \abs{I(z+\gamma(t),-,P)}\frac{A}{\ell(I)}2^{M}
	%	\end{align*} 
	%	and the claim is clear by Lemma \ref{fact} below. 
	%	
	%	Hence, assume that $\phi(z,+,W)\cap W(M)\not=\emptyset.$ 
	%	Then, there exists $t_M$ so that $z+\gamma(t_M) \in W(M)$ and $\varphi(z+\gamma(t_M),M) = 1 = \Norm{\varphi}{\infty}.$ 
%	\begin{align}\label{sup}
%		\sup_{t,t'\in I(z,+,W)}\frac{\varphi(z+\gamma(t),M)}{\varphi(z+\gamma(t'),M)} = \sup_{	t,t'\in\calK(z,+,W)}\frac{\varphi(z+\gamma(t),M)}{\varphi(z+\gamma(t'),M)}
%	\end{align}
%and
%		\begin{align}\label{inf}
%	\inf_{t,t'\in I(z,+,W)}\frac{\varphi(z+\gamma(t),M)}{\varphi(z+\gamma(t'),M)} = \inf_{	t,t'\in\calK(z,+,W)}\frac{\varphi(z+\gamma(t),M)}{\varphi(z+\gamma(t'),M)}.
%	\end{align} 
\end{proof}
\begin{lem}\label{fact}
There holds that 
\begin{align}\label{q2}
	\lim_{A\to\infty}\sup_{\substack{ Q\in\calR_{\gamma} \\ z\in P \\  t,t'\in I(z,+,W) }}\frac{\abs{I(z+\gamma(t),-,P)}}{\abs{I(z+\gamma(t'),-,P)}} = 	%\lim_{A\to\infty}\inf_{\substack{ Q\in\calR_{\gamma} \\ z\in P \\  t,t'\in I(z,+,W) }}\frac{\abs{I(z+\gamma(t),-,P)}}{\abs{I(z+\gamma(t'),-,P)}} = 
	1.
\end{align}
\end{lem}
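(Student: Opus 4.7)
The plan is to parameterize $|I(z+\gamma(t),-,P)|$ explicitly via a change of variables and show that it depends on $t$ only through $|t|$, which varies by a factor $1+O(A^{-1})$ across $I(z,+,W)$. Substituting $s=t+u$ in the defining condition $z+\gamma(t)-\gamma(s)\in P$ yields the equivalent constraint $(-u,-u(2t+u))\in P-z$, and writing $[a_i,b_i]=\pi_i(P)-\pi_i(z)$ (so $a_i\le 0\le b_i$, $b_1-a_1=\ell(I)$, and $b_2-a_2=\ell(I)^2$),
\[
|I(z+\gamma(t),-,P)|=|S_t|,\qquad S_t:=\bigl\{u\in[-b_1,-a_1]:-u(2t+u)\in[a_2,b_2]\bigr\}.
\]

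To analyze $S_t$, I would study the quadratic $f_t(u)=-u(2t+u)$. Since $t\in I(z,+,W)$ forces $|t|\sim A\ell(I)$, the equation $f_t(u)=c$ admits, near $u=0$, the unique small root $u=-t+\sqrt{t^2-c}$, which Taylor-expands as $u=-c/(2t)+O(c^2/t^3)$ for $|c|\le\ell(I)^2$. Hence $\{u:f_t(u)\in[a_2,b_2]\}$ is an interval $K_t\ni 0$ of length $\ell(I)^2/(2|t|)(1+O(A^{-2}))\sim\ell(I)/A$, much shorter than $[-b_1,-a_1]$ (of length $\ell(I)$). A direct computation of the intersection gives
\[
|S_t|=\min(b_1,\xi_t)+\min(|a_1|,\eta_t),
\]
where $\xi_t,\eta_t$ denote the two absolute distances from $0$ to the endpoints of $K_t$, each of the form $c/(2|t|)(1+O(A^{-2}))$ for some $c\in\{b_2,|a_2|\}$.

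For $t,t'\in I(z,+,W)$, Lemma~\ref{lem:geom1} gives $|t-t'|\lesssim\ell(I)$, so $|t|/|t'|=1+O(A^{-1})$ and hence $\xi_t/\xi_{t'},\,\eta_t/\eta_{t'}=1+O(A^{-1})$. Combining this multiplicative comparison with the $1$-Lipschitz property of $x\mapsto\min(p,x)$ yields, via a short case analysis, $|S_t|/|S_{t'}|=1+O(A^{-1})$ uniformly in $Q,z,t,t'$; sending $A\to\infty$ then gives the claim.

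The main obstacle is handling the boundary scenarios where $b_1$ or $|a_1|$ is arbitrarily small, so that $|S_t|$ itself may be tiny. The saving observation is the constraint $b_1+|a_1|=\ell(I)$, forcing $\max(b_1,|a_1|)\ge\ell(I)/2$ and thus, for $A$ large, ensuring that at least one of $\min(b_1,\xi_t),\,\min(|a_1|,\eta_t)$ equals the \emph{free} quantity $\xi_t$ or $\eta_t$. This free summand gives $|S_t|\gtrsim\ell(I)/A$ and absorbs the relative variation multiplicatively, while the other summand contributes only $O(A^{-1})$-smaller relative perturbations through Lipschitzness. This is what upgrades the $\sim$-estimate of Lemma~\ref{lem:geom2} to the $(1+o(1))$-estimate needed here.
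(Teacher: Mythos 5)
Your setup is correct and genuinely different from the paper's proof: the paper replaces the parabolic arcs by secant lines $f_{z,t_h}$ with slopes $t_h\in[2A\ell(I),2(A+N)\ell(I)]$ and runs a case analysis over which edges of $P$ these lines exit, getting the two-sided bound $\frac{A}{A+N}\le\cdot\le\frac{A+N}{A}$; you instead compute the quantity exactly via $s=t+u$, arriving at $|S_t|=\min(b_1,\xi_t)+\min(|a_1|,\eta_t)$ with $\xi_t,\eta_t=c/(2|t|)\,(1+O(A^{-2}))$, $c\in\{b_2,|a_2|\}$, and this formula together with $|t|/|t'|=1+O(A^{-1})$ does contain everything needed. (Two small points: the bound $|t-t'|\lesssim\ell(I)$ is not literally Lemma \ref{lem:geom1}, which concerns $x\in Q$; for $z\in P$ it follows from $\pi_1(W)\subset\pi_1(Q)+I_A$, an interval of length $(N+1)\ell(I)$. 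Also one should observe that the second branch of $f_t^{-1}([a_2,b_2])$, located near $u=-2t$, misses $[-b_1,-a_1]$.)

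The way you close the argument, however, contains a genuine error. You assert that the uncapped (``free'') summand forces $|S_t|\gtrsim\ell(I)/A$, and you use this lower bound to absorb the additive (Lipschitz) perturbation of the capped summand. That lower bound is false: the free summand equals $c/(2|t|)(1+O(A^{-2}))$ for one specific $c\in\{b_2,|a_2|\}$, and your constraint $b_1+|a_1|=\ell(I)$ controls only the horizontal caps, not the vertical data $b_2,|a_2|$ that determine $\xi_t,\eta_t$. For $z$ near the corner $v_{lb}$ one has $|a_1|\approx 0$ and $|a_2|\approx 0$, so the summand paired with the large cap $b_1\approx\ell(I)$ is free but of size $|a_2|/(2|t|)\ll\ell(I)/A$, and indeed $|S_t|\sim 2^{-r}\frac{\ell(I)}{A}$ with $r$ arbitrarily large; this is exactly Lemma \ref{lem:geom2} on $\Delta_r(P^{lb})$ (similarly near $v_{rt}$), so your claimed bound contradicts the paper's own lemma precisely in the near-corner regime that makes the uniformity over $z\in P$ delicate. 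Fortunately the repair stays entirely inside your framework and is simpler than what you wrote: drop the Lipschitz step and compare each summand multiplicatively, using that $x\le(1+\delta)x'$ implies $\min(p,x)\le\min\big((1+\delta)p,(1+\delta)x'\big)=(1+\delta)\min(p,x')$; applying this with $\delta=O(A^{-1})$ to both summands (the caps $b_1,|a_1|$ do not depend on $t$) gives $|S_t|\le(1+O(A^{-1}))|S_{t'}|$ and symmetrically, with no lower bound on $|S_t|$ needed. (At genuinely degenerate points such as $z=v_{rt}$ both quantities in \eqref{q2} vanish; this $0/0$ ambiguity is inherent to the statement and is shared by the paper's proof, so it is not held against you.)
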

\begin{proof}
%The claim follows if we show that $\psi(Q,z,t,t')\to 1$ independently of the fixed data $Q,z,t,t'.$
  Fix $z\in P,$ denote $f_{z,t}(s) = z + s(1,-t)$ and note that 
	$$
2A\ell(I)\leq \abs{\gamma_2'(t)}\leq 2(A+N)\ell(I),\qquad t\in I(z,+,W)\subset I_A.
	$$
	Hence, for each $t\in I_A,$ there exists $t_h\in [2A\ell(I),2(A+N)\ell(I)]$ so that $
 	\abs{I(z+\gamma(t),-,P)} =  \abs{f_{z,t_h}^{-1}(P)}.$ 
%	The claim is geometrically clear from this last identity; next, we provide the details.
 
%Write $\partial P = \cup_{a\in\{b,t,l,r\}}\partial_aP,$ where e.g. $\partial_b P$ is the bottom edge of $P.$
First, assume that the lines $f_{z,t_h},f_{z,t'_h}$ exit the rectangle $P$ through the bottom and top edges. Then, there holds that $\abs{f_{z,t_h}^{-1}(P)}t_h = \ell(I)^2,$ and hence
 \begin{align}
 	 \frac{A}{A+N}\leq \frac{\abs{I(z+\gamma(t),-,P)}}{\abs{I(z+\gamma(t'),-,P)}} = \frac{t'_h}{t_h}\leq\frac{A+N}{A}
 \end{align}
from which the claim follows, with this configuration of the data.

Then, let  $t>t'$ and assume that the lines $f_{z,t_h},f_{z,t'_h}$ exit the rectangle $P$ from the right edge $\partial_r P$ (then, as $A$ is large, they exit $P$ through the top edge)  and respectfully let $e_t,e_{t'}\in\partial_r P$ be these points. By $t>t',$ it follows that $t_h>t'_h$ and $\pi_2(e_{t'})>\pi_2(e_t)$, and hence that
\begin{align}\label{fact0}
	\abs{f_{z,t'_h}^{-1}(P)} > \abs{f_{z,t_h}^{-1}(P)},\qquad \pi_2(v_{rt}-e_{t'})>\pi_2(v_{rt}-e_{t}).
\end{align}
Using the estimates on the line \eqref{fact0} and $\abs{f_{z,s_h}^{-1}(P)}s_h = \pi_2(v_{rt}-e_s),$ for $s\in\{t,t'\},$ we find that 
\begin{align}\label{xx}
	1> \frac{\abs{f_{z,t_h}^{-1}(P)}}{\abs{f_{z,t'_h}^{-1}(P)}} = \frac{\pi_2(v_{rt}-e_t)}{\pi_2(v_{rt}-e_{t'})}\frac{t'_h}{t_h} > \frac{t'_h}{t_h} > \frac{A}{A+N}.
\end{align}
From \eqref{xx} we find that
\begin{align*}
\frac{A}{A+N}	\leq \frac{\abs{f_{z,t_h}^{-1}(P)}}{\abs{f_{z,t'_h}^{-1}(P)}}  \leq \frac{A+N}{A}
\end{align*}
and the claim follows with this configuration of the data.
% \begin{align}\label{fact0}
% 	 \abs{f_{z,t_h}^{-1}(P)} = \abs{f_{e_{t},t_h}^{-1}(P)},\qquad \abs{f_{z,t'_h}^{-1}(P)} = \abs{f_{e_{t'},t'_h}^{-1}(P)}.
% \end{align} 
% By $t>t',$ we have $t_h> t'_h$ and $\pi_2(e_{t}) < \pi_2(e_{t'}),$ hence there holds that
% \begin{align}\label{fact1}
% 	\abs{f_{z,t_h}^{-1}(P)} > \abs{f_{e_{t'},t_h}^{-1}(P)},\qquad  \abs{f_{z,t'_h}^{-1}(P)} <	\abs{f_{e_{t},t'_h}^{-1}(P)},\qquad \abs{f_{z,t_h}^{-1}(P)} <  \abs{f_{z,t'_h}^{-1}(P)}.
% \end{align}
% Using the identities and estimates on the lines \eqref{fact0} and \eqref{fact1} we find
% \begin{align}\label{fact2}
% 	\frac{\abs{f_{e_{t'},t_h}^{-1}(P)}}{\abs{f_{e_{t'},t'_h}^{-1}(P)}} < \frac{\abs{f_{z,t_h}^{-1}(P)} }{\abs{f_{z,t'_h}^{-1}(P)} } < 1 <  \frac{\abs{f_{z,t'_h}^{-1}(P)}}{\abs{f_{z,t_h}^{-1}(P)} } < \frac{\abs{f_{e_t,t_h'}^{-1}(P)}}{\abs{f_{e_t,t_h}^{-1}(P)}}.
% \end{align}
% Let $s\in\{t,t'\}.$ Then, there holds that $\abs{f_{e_{s},s_h}^{-1}(P)}s_h = \pi_2(v_{rt}-e_s);$ consequently,
% \begin{align}\label{fact3}
% 		\frac{A}{A+N}\leq \frac{t_h'}{t_h} = \frac{\abs{f_{e_{t'},t_h}^{-1}(P)}}{\abs{f_{e_{t'},t'_h}^{-1}(P)}},\qquad \frac{\abs{f_{e_t,t_h'}^{-1}(P)}}{\abs{f_{e_t,t_h}^{-1}(P)}} = \frac{t_h}{t_h'}\leq \frac{A+N}{A}.
% \end{align}
%From \eqref{fact2} and \eqref{fact3} the claim follows, with this configuration of the data.  

Lastly, we consider the case $t>t'$ when $f_{z,t_h}$ and $f_{z,t'_h},$ respectively, exit through the bottom edge and through the right edge. This case follows from the two cases above by writing 
\begin{align*}
	\frac{ \abs{f_{z,t_h}^{-1}(P)}}{ \abs{f_{z,t'_h}^{-1}(P)}} = 	\frac{ \abs{f_{z,t_h}^{-1}(P)}}{ \abs{f_{z,t''_h}^{-1}(P)}} \cdot 	\frac{ \abs{f_{z,t''_h}^{-1}(P)}}{ \abs{f_{z,t'_h}^{-1}(P)}}
\end{align*}  
for $t'_h<t''_h<t_h$ such that $f_{z,t''_h}$ passes through the vertex $v_{rb}$.
This last case along with the first two were representative of all possible cases, and as all the estimates were independent of the rectangle $Q,$ the proof is concluded.
\end{proof}

The other collection of functions we use is  $\{u_W\}_{Q\in\calR_{\gamma}},$ where  $u_W(y) = (g_W\circ\Xi)(y)$
and $\Xi$ is the following reflection
	\begin{align}\label{map:reflect}
	\Xi(x) = \big(w_d-(x_1-w_d)  ,x_2\big),\qquad w_d = \pi_1\big(\{x = (x_1,x_2)\in W: x_2 = \inf_{y\in W}y_2\}\big).
	\end{align}
The reflection $\Xi$ is exactly across the line depicted in Figure \ref{fig:all2} and the function $u_W$ is the symmetric version of $g_W$ with respect to this reflection.

\subsection{Approximate weak factorization}\label{sect:awf}
\subsubsection{The first two iterations}\label{sec:12} In this section we prove the following Proposition \ref{prop:pieceA}.
\begin{prop}\label{prop:pieceA} Let $f\in L^1_{\loc}$ be supported on a parabolic rectangle $Q=I\times J.$
Then, for all $A$ large enough (independently of $Q$), the function $f$ can be written as 
	\begin{align}\label{A1}
	f  = \left[h_QH_{\gamma}^*g_W - g_W H_{\gamma}h_Q\right] + \left[ h_W H_{\gamma}g_P - g_PH_{\gamma}^*h_W\right]  + \wt{f}_P,
	\end{align}  
where
	\begin{align}\label{A2}
	h_Q = \frac{f}{H_{\gamma}^*g_W},\qquad h_W = \frac{g_W H_{\gamma}h_Q}{H_{\gamma}g_P},\qquad \wt{f}_P = g_PH_{\gamma}^*\Big( \frac{g_W}{H_{\gamma}g_P}H_{\gamma}\big( \frac{f}{H_{\gamma}^*g_W}\big)\Big),
	\end{align} 
and there holds that
	\begin{align}\label{A3}
	\abs{h_Q} \lesssim_{A} \abs{f},\qquad 	\abs{h_W}\lesssim_{A} \Norm{f}{\infty} 1_W.
	\end{align}
	
Moreover, suppose that $\int_Q f = 0$ and let $\varepsilon>0.$ Then, for all $A$ large enough (independently of $Q$), there holds that
	\begin{align}\label{A4}
	\abs{\wt{f}_P}\lesssim  \varepsilon\Norm{f}{\infty}1_P.
	\end{align}
\end{prop}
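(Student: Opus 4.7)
The factorization \eqref{A1} is a simple iterated add-and-subtract: by definition $f = h_Q H_\gamma^* g_W$, so
\[
f = [h_Q H_\gamma^* g_W - g_W H_\gamma h_Q] + h_W H_\gamma g_P,
\]
using $h_W H_\gamma g_P = g_W H_\gamma h_Q$. Repeating the same trick on the last summand yields \eqref{A1} with $\wt{f}_P = g_P H_\gamma^* h_W$. The only concern is division by zero in $h_Q, h_W$, which the pointwise bounds below rule out by keeping the denominators uniformly bounded away from zero in modulus on the relevant supports.

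For \eqref{A3} I need modulus lower bounds on $H_\gamma^* g_W$ over $Q$ and on $H_\gamma g_P$ over $W$. For $x\in Q$, Lemma \ref{lem:geom12} gives $|I(x,+,W)|\sim \ell(I)$ with $|t|\sim A\ell(I)$ throughout, while the normalization \eqref{q0} forces $g_W\equiv 1$ on the generic part of the image, so $|H_\gamma^* g_W(x)|\sim 1/A$ and hence $|h_Q|\lesssim_A |f|$. For $y\in W$, \eqref{q00} gives
\[
|H_\gamma g_P(y)| \sim \frac{|I(y,-,P)|}{A\ell(I)} \sim \frac{g_W(y)}{A^2},
\]
which combined with the trivial estimate $|H_\gamma h_Q(y)|\lesssim \|h_Q\|_\infty|I(y,-,Q)|/(A\ell(I))$ and the direct geometric observation $|I(y,-,Q)|\sim \ell(I)/A$ (the $x_2$-constraint is tighter than the $x_1$-constraint by a factor of $A$), yields $|h_W(y)|\lesssim_A \|f\|_\infty 1_W(y)$.

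For \eqref{A4}, fully expanding $\wt{f}_P(z) = g_P(z) H_\gamma^* h_W(z)$ gives, for $z\in P$,
\[
\wt{f}_P(z) = \int_{I(z,+,W)}\int_{I(z+\gamma(s),-,Q)} K(z,s,u)\, f(z+\gamma(s)-\gamma(u))\,\ud u\,\ud s,
\]
with $K(z,s,u) = g_W(z+\gamma(s))/[su\cdot H_\gamma g_P(z+\gamma(s))\cdot H_\gamma^* g_W(z+\gamma(s)-\gamma(u))]$. The parabolic change of variables $x = z+\gamma(s)-\gamma(u)$ is a bijection onto $Q$: the equations $s-u = x_1-z_1$ and $s+u = (x_2-z_2)/(x_1-z_1)$ uniquely determine $(s,u)$, and the Jacobian is $|\partial x/\partial(s,u)| = 2|x_1-z_1|\sim A\ell(I)$. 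Following the flat model in the proof of Theorem \ref{thm:main2}, I split $K(z,s,u) = K_0(z) + [K(z,s,u) - K_0(z)]$, use $\int_Q f = 0$ on the $K_0$-part after change of variables, and control the remainder pointwise.

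The slow variation $|K(z,s,u) - K_0(z)|\leq \vare_A K_0(z)$ with $\vare_A\to 0$ as $A\to\infty$ is precisely what the geometric construction in Section \ref{ss:gf} was engineered to deliver: $1/s$ and $1/u$ have relative oscillation $O(1/A)$ on the $(s,u)$-domain; $g_W(z+\gamma(s))$ is slowly varying in $s$ by \eqref{q1}; $H_\gamma g_P(z+\gamma(s))$ is slowly varying in $s$ by Lemma \ref{fact}; and $H_\gamma^* g_W(x)$ is slowly varying in $x\in Q$ by Lemmas \ref{lem:geom12}--\ref{lem:geom1} together with \eqref{q1}. After change of variables the remainder contributes at most $\vare_A\|f\|_\infty$, while the main term becomes $K_0(z)\int_Q f(x)(2|x_1-z_1|)^{-1}\ud x$, and using $\int_Q f = 0$ to subtract any constant,
\[
\Bigl|K_0(z)\int_Q f(x)\frac{\ud x}{2|x_1-z_1|}\Bigr| = \Bigl|K_0(z)\int_Q f(x)\Big[\frac{1}{2|x_1-z_1|}-\frac{1}{2|c-z_1|}\Big]\ud x\Bigr| \lesssim \frac{\|f\|_\infty}{A},
\]
since $|x_1-z_1|\sim A\ell(I)$ varies by only $O(\ell(I))$ over $x\in Q$, making the bracket $O(1/(A^2\ell(I)))$. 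Combining, $|\wt{f}_P(z)|\lesssim (\vare_A + 1/A)\|f\|_\infty$ pointwise on $P$, and taking $A$ large gives \eqref{A4}. The principal obstacle, absent from the flat Theorem \ref{thm:main2} where the change of variables was measure-preserving, is exactly this non-trivial curved Jacobian $1/(x_1-z_1)$; the conditions (i)--(iii) on $g_W$ and Lemmas \ref{lem:geom12}--\ref{fact} are designed precisely to make its slow variation exploitable via $\int_Q f = 0$.
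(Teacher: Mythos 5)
Your proposal is correct and follows essentially the same route as the paper: the same algebraic factorization, the bounds \eqref{A3} via \eqref{q0}--\eqref{q00} and the geometric lemmas, and for \eqref{A4} the same change of variables $x=z+\gamma(s)-\gamma(u)$ with Jacobian $2|x_1-z_1|$, a constant-minus-kernel split whose error is controlled by the slow variation delivered by \eqref{q1}, Lemma \ref{fact} and Lemma \ref{lem:geom1}, and the zero mean of $f$ to kill the main term. The only difference is bookkeeping: you fold the factor $1/(su)$ into the kernel and subtract one constant, whereas the paper separates the variation of $C_{z,A}(t,s)$ from that of the density $\vartheta_z$ (via $\psi_z$); this is cosmetic, and your level of detail on the asserted geometric facts (e.g.\ surjectivity onto $Q$, the measure of the set where $g_W\neq 1$) matches the paper's.
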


 The identities on the lines \eqref{A1} and \eqref{A2} are simply algebraic, as long as the functions $h_Q, h_W$ are well-defined, which we will see below, and hence, it is enough to prove the estimates on the lines \eqref{A3} and \eqref{A4}. 

\begin{proof}[Proof of the estimates \eqref{A3}.]
%	In the following it is crucial that we can always make the choice of the constants $A,K$ independent of the involved data and the estimates are uniform for all rectangles $Q\in\calR_{\gamma}.$
We begin with a better estimate than is actually needed for \eqref{A3}, which will be reused in the proof of the estimate \eqref{A4}.
Let $\varepsilon>\varepsilon'>0$ and consider the set
	\begin{align*}
		I^c(x,+,W) = \{ t\in 	I(x,+,W) : g_W(x+\gamma(t)) = 1\}.
	\end{align*}
We choose  $A$ (independently of the data) so large that that
	\begin{align}\label{ik}
	\abs{	I^c(x,+,W)} \geq (1-\varepsilon')\abs{	I(x,+,W)}.
	\end{align}
Let us give a short argument for \eqref{ik}. Let $x\in Q$ be arbitrary and let $t_x$ be the smallest number with $y :=x+\gamma(t_x)\in\partial W.$ Let $c>0$ be a constant and define $r=c \frac{\ell(I)}{A}.$ Define the point $y':= x+\gamma(t_x+r)$ and let $R$ denote the rectangle with opposite vertices $y,y'.$ Then, by $\abs{\gamma_2'(t)}\sim A\ell(I),$ the rectangle $R$ has dimensions
\begin{align*}
	R = I\times J,\qquad \ell(I) = c \frac{\ell(I)}{A} ,\qquad  \ell(J) \sim c\ell(I)^2.
\end{align*}
Then, let $s_y$ be such that $y-\gamma(s_y) = v_{lb}$ and notice that the points $v_{lb}$ and $z' := y' - \gamma(s_y)$ are the opposite vertices of the rectangle $G = -\gamma(s_y) + R \subset P$ and that $\phi(y',-,P)\cap G =v_{rt}(G),$ where $v_{rt}(G)$ is the right-top vertex of $G.$ Since the rectangle $G$ has the same dimensions as $R,$ it follows with a choice of the constant $c$ large enough that $\phi(y'-,P)\cap P^c \not= \emptyset$ which implies that $g_W(y') = 1$ by property $(i)$ of the function $g_W.$ The same argument also works if we begin with "let $t_x$ be the \emph{largest} number with $y :=x+\gamma(t_x)\in\partial W.$" It follows that the two sections of the curve $\phi(x,+,W)$ where $g_W\not = 1$ both have lengths $\lesssim \frac{\ell(I)}{A}.$ By Lemma \ref{lem:geom1} we have $\abs{I(x,+,W)}\sim\ell(I)$ and hence we conclude that
\begin{align*}
	\abs{I(x,+,W)\setminus I^c(x,+,W)}\lesssim \frac{\ell(I)}{A},
\end{align*}
which implies \eqref{ik}.

Then, by $I(x,+,W)\subset I_A$ and \eqref{ik} we find
\begin{equation}\label{n1}
	\begin{split}
		\frac{(1-\varepsilon')\abs{	I(x,+,W)}}{(A+N)\ell(I)} \leq	\frac{	\abs{I^c(x,+,W)}}{(A+N)\ell(I)} &\leq	H_{\gamma}^*g_W(x) \\ 
		&= \int_{I(x,+,W)}g_W(x+\gamma(s))\frac{\ud s}{s} \leq  	\frac{	\abs{I(x,+,W)}}{A\ell(I)}.
	\end{split}
\end{equation}
It follows from Lemma \ref{lem:geom1} that with a choice of $A$ large enough for some absolute constants $\kappa_1,\kappa_2$ (independently of the data) there holds that 
\begin{align}\label{n22}
	0 < \kappa_1\leq \sup_{\substack{Q\in\calR_{\gamma} \\ x\in Q}}\frac{\abs{I(x,+,W)}}{\ell(I)}\leq \kappa_2 < \infty.
\end{align} 
Moreover, by Lemma \ref{lem:geom1}
\begin{align}\label{n222}
	\lim_{A\to\infty}\sup_{\substack{Q\in\calR_{\gamma} \\ x,x'\in Q}}\frac{\abs{I(x,+,W)}}{\abs{I(x',+,W)}} = 1.
\end{align}
Now choose arbitrary $x'\in Q$ and define $\kappa_{Q,A} = \frac{\abs{I(x',+,W)}}{\ell(I)}.$ Let $x\in Q$ be arbitrary, then, for a choice of $A$ large enough  combining both \eqref{n22} and \eqref{n222} it follows that  
\begin{align}\label{n2}
\kappa_{Q,A}(1-\varepsilon') \leq \frac{\abs{I(x,+,W)}}{\ell(I)} \leq \kappa_{Q,A}(1+\varepsilon'),\qquad 0<\kappa_1\leq \kappa_{Q,A} \leq \kappa_2 < \infty.
\end{align} 
Then, choose $A$ so large that \eqref{n1} and \eqref{n2} imply
	\begin{align}\label{n3}
			\frac{(1-\varepsilon)}{(A+N)}\kappa_{Q,A} \leq 	H_{\gamma}^*g_W(x) \leq 	\frac{(1+\varepsilon)}{A}\kappa_{Q,A}, \qquad 0<\kappa_1\leq \kappa_{Q,A} \leq \kappa_2 < \infty.
	\end{align}
By $I(y,-,P)\subset-2I_A$ we find
	\begin{align}\label{n4}
		\abs{	H_{\gamma}g_P(y)} &= \int_{I(y,-,P)}\frac{\ud s}{\abs{s}} \sim  \frac{\abs{I(y,-,P)}}{A\ell(I)}.
	\end{align}
It follows from \eqref{n3} and \eqref{n4}, respectively, that $h_Q,h_W$ are both well-defined and 
	\begin{align*}
		\abs{h_Q} \sim A\abs{f},\qquad \abs{h_W(y)}\sim \frac{A\ell(I)}{\abs{I(y,-,P)}}\abs{I(y,-,P)}\frac{A}{\ell(I)}	\abs{H_{\gamma}h_Q(y)} = A^2	\abs{H_{\gamma}h_Q(y)},
	\end{align*}
for the above estimate of $h_W,$ recall \eqref{q00}.
Estimating $\abs{H_{\gamma}h_Q(y)}$ a little further we find that
	\begin{align*}
		\abs{H_{\gamma}h_Q(y)} = \Babs{\int_{I(y,-,Q)}h_Q(y-\gamma(s))\frac{\ud s}{s}} &\lesssim \Norm{h_Q}{\infty}\int_{I(y,-,Q)}\frac{\ud s}{s} \\
		&\lesssim A\Norm{f}{\infty}\frac{\abs{I(y,-,Q)}}{A\ell(I)}\lesssim  A^{-1}\Norm{f}{\infty},
	\end{align*}
where in the last estimate we used Lemma \ref{lem:geom2}.
	Hence, we find  that $\abs{h_W} \lesssim  A  \Norm{f}{\infty}1_W\lesssim_A \Norm{f}{\infty}1_W.$
\end{proof}

\FloatBarrier
\begin{fig}[h]
	\centering
	\includegraphics[scale=0.175]{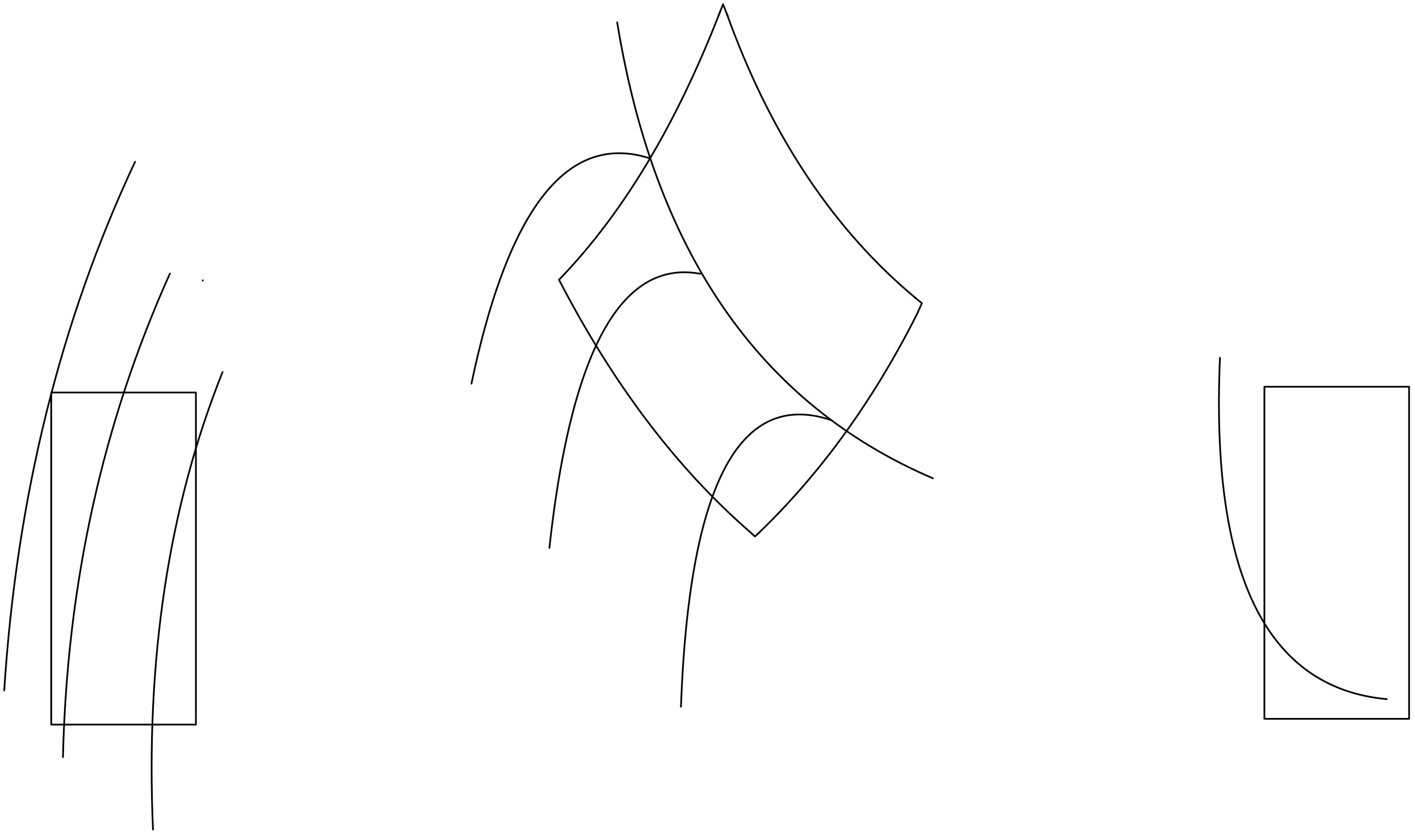}
	\caption{ Why \eqref{eq:crux2} holds; a parabola $\{z+\gamma(t): t \leq 0\}$ begins from a point $z\in P,$ exits $P,$ travels and penetrates into $W;$ then, another parabola $\{y-\gamma(t): t \geq 0\}$ begins from a point $y\in \phi(z,+,W),$ exits the set $W,$ travels and penetrates into the rectangle $Q.$
} 
	\label{fig2}
\end{fig}

	Now we come to the crucial part of the argument.
	Let $z\in P$ be arbitrary. Then, there holds that (see Figure \ref{fig2})
	\begin{align}\label{eq:crux2}
		Q &= \bigcup_{y\in\phi(z,+,W)}\phi(y,-,Q).
	\end{align}
There also holds that 
	\begin{align}\label{eq:crux3}
	\bigcup_{y\in\phi(z,+,W)}\phi(y,-,Q) = 
	\big\{z+\gamma(t)-\gamma(s): s\in I(z+\gamma(t),-,Q),\,t\in  I(z,+,W)\big\},
	\end{align}
easily checked from definitions.
Then, we notate
	\begin{align}\label{eq:Iz}
	I_zf = \int_{I(z,+,W)}\int_{I(z+\gamma(t),-,Q)} f(z+\gamma(t)-\gamma(s))\frac{\ud s}{s}\frac{\ud t}{t},
	\end{align}
and \eqref{eq:crux2}, \eqref{eq:crux3} show the double integral in \eqref{eq:Iz} to be over the rectangle $Q.$
Next, we recognize the density $\vartheta_z:Q\to \R_+$ satisfying
	\begin{align}\label{density}
		I_zf = \int_Q f\vartheta_z,\qquad \forall f\in L^1_{\loc}.
	\end{align}
Consider the mapping 
	\begin{align*}
		&h_z:F\to Q,\qquad F = \big\{(t,s):s\in I(z+\gamma(t),-,Q),\,t\in  I(z,+,W) \big\}, \\
		&h_z(t,s) = z +\gamma(t)-\gamma(s).
	\end{align*}
For a change of variables, we need to check that $h_z$ is bijective and differentiable. Differentiability is obvious and by \eqref{eq:crux2} and \eqref{eq:crux3} we have surjectivity. For injectivity, it is enough to show the following: let $y,y'\in\phi(z,+,W)$ be distinct, then $\phi(y,-,Q)\cap\phi(y',-,Q)=\emptyset.$ Notice that $\phi(y,-,Q)$ and $\phi(y',-,Q)$ are both contained in different translates of $\phi(0,-,\R_-\times \R_-),$ where $\R_- = (-\infty,0]$ (and $\R_+ = -\R_-$). The other fact we use is
\begin{align*}
\big(	a+\phi(0,-,\R_-\times \R_-)\big)&\cap\big( b + \phi(0,-,\R_-\times \R_-) \big)=\emptyset,\quad
a-b\in  \R_-\times \R_+\cup \R_+\times \R_-.
\end{align*}
Now injectivity follows by noting that $y-y' \in  \R_-\times \R_+\cup \R_+\times \R_-.$

Denote $\rho(t,s) =  (ts)^{-1}.$ Then, a change of variables tells us that 
	\begin{align*}
			I_zf = \int_F f\circ h_z(t,s)\frac{\ud (t,s)}{\rho(t,s)} = \int_Qf(x)\frac{\abs{\det J_{h_z^{-1}}(x)}}{\rho\circ h_z^{-1}(x)}\ud x,
	\end{align*}
and hence it remains to evaluate the density. There holds that 
	\begin{align*}
		\det J_{h_z^{-1}}(h_z(t,s)) = \frac{1}{\det J_{h_z}(t,s)},\qquad 
		\det J_{h_z}(t,s) = \det\begin{bmatrix}
		1 & -1 \\
		2t & -2s
		\end{bmatrix}= 2(t-s),
	\end{align*}
where for arbitrary $x\in Q$ we denote 
	\begin{align}\label{tox}
	x= 	h_z(t_x,s_x) = z+\gamma(t_x)-\gamma(s_x),\qquad-t_x\sim s_x\sim A\ell(I)
	\end{align} 
	for the unique choice of such $t_x,s_x.$
Then, we find that
	\begin{align}\label{density1}
		\vartheta_z(x) = \frac{\abs{\det J_{h_z^{-1}}(x)}}{\rho\circ h_z^{-1}(x)} = \frac{1}{2\abs{t_x-s_x}}\frac{1}{s_xt_x}.
	\end{align}

\begin{proof}[Proof of the estimate \eqref{A4}.] 
%	During the proof, the choice of the constant $A$ will always be made independent of all data, and all the implicit constants in the estimates are independent of $A.$
We begin with gathering three two sided estimates: \eqref{k1}, \eqref{k2} and \eqref{k4}.
Let $\varepsilon'>0.$ Let $x\in Q.$ Repeating the contents of the line \eqref{n3}, after a choice of $A$ sufficiently large, we find that
\begin{align}\label{k1}
\frac{(1-\varepsilon')}{(A+N)}\kappa_{Q,A} \leq 	H_{\gamma}^*g_W(x) \leq 	\frac{(1+\varepsilon')}{(A-1)}\kappa_{Q,A},\qquad 0<\kappa_1\leq \kappa_{Q,A}\leq \kappa_2 < \infty.
\end{align}

The property $(iii)$ (the line \eqref{q1}) shows that with a choice of $A$ large enough and an arbitrary $t''\in I(z,+,W),$  after defining $C_{z,Q,A} = g_W(z+\gamma(t'')),$ there holds that 
\begin{align}\label{q5}
\lim_{A\to\infty}\sup_{\substack{ Q\in\calR_{\gamma} \\ z\in P \\  t\in I(z,+,W) }}\frac{g_W(z+\gamma(t))}{C_{z,Q,A}} = 1.
\end{align}
By \eqref{q5} and the property $(ii)$ (the line \eqref{q00}), for $A$ sufficiently large, there holds that 
\begin{align}\label{k2}
(1-\varepsilon')C_{z,Q,A} \leq \abs{g_W(z+\gamma(t))}\leq  (1+\varepsilon')C_{z,Q,A},\qquad C_{z,Q,A} \sim  \abs{I(z+\gamma(t_z),-,P)}\frac{A}{\ell(I)},
\end{align}
where we fix some arbitrary choice of $t_z\in I(z,+,W).$

There holds that
\begin{align}\label{k3}
\frac{\abs{I(z+\gamma(t),-,P)}}{(A+N)\ell(I)}\leq \babs{H_{\gamma}g_P(z+\gamma(t))} = \int_{I(z+\gamma(t),-,P)}\frac{\ud s}{s} \leq \frac{\abs{I(z+\gamma(t),-,P)}}{(A-1)\ell(I)}.
\end{align}
By \eqref{q2} we find that with a choice of $A$ large enough \eqref{k3} implies
\begin{align}\label{k4}
	 \frac{(1-\varepsilon')\abs{I(z+\gamma(t_z),-,P)}}{(A+N)\ell(I)}\leq \babs{H_{\gamma}g_P(z+\gamma(t))} \leq \frac{(1+\varepsilon')\abs{I(z+\gamma(t_z),-,P)}}{(A-1)\ell(I)},
\end{align}
where both sides of the estimate now depend only on the fixed choice $t_z.$
Denote
	\begin{align*}
C_{z,A}(t,s) := \Big(\frac{g_W}{H_{\gamma}g_P}\Big)(z+\gamma(t))\Big(\frac{1}{H_{\gamma}^*g_W}\Big)(z+\gamma(t)-\gamma(s)).
	\end{align*}
Together \eqref{k1}, \eqref{k2} and \eqref{k4} show that 
\begin{equation}\label{j1}
	\begin{split}
		\abs{C_{z,A}(t,s)} &\leq (1+\varepsilon')C_{z,Q,A} \frac{(A+N)\ell(I)}{(1-\varepsilon')\abs{I(z+\gamma(t_z),-,P)}} \frac{A+N}{(1-\varepsilon')\kappa_{Q,A}} \\ &= \frac{1+\varepsilon'}{(1-\varepsilon')^2} \frac{C_{z,Q,A}\ell(I)}{A\abs{I(z+\gamma(t_z),-,P)}} \frac{A(A+N)^2}{\kappa_{Q,A}}.
	\end{split}
\end{equation}
Similarly, we find that
\begin{equation}\label{j2}
\begin{split}
	\abs{C_{z,A}(t,s)} &\geq (1-\varepsilon') C_{z,Q,A} \frac{(A-1)\ell(I)}{(1+\varepsilon')\abs{I(z+\gamma(t_z),-,P)}}\frac{A-1}{(1+\varepsilon')\kappa_{Q,A}} \\ 
&= \frac{1-\varepsilon'}{(1+\varepsilon')^2} \frac{C_{z,Q,A}\ell(I)}{A\abs{I(z+\gamma(t_z),-,P)}} \frac{A(A-1)^2}{\kappa_{Q,A}}.
\end{split}
\end{equation}
 Let $\varepsilon>0.$ As $ C_{z,Q,A} \sim  \abs{I(z+\gamma(t_z),-,P)}\frac{A}{\ell(I)}$ and $0<\kappa_1\leq \kappa_{Q,A}\leq \kappa_2 < \infty,$ we find with a choice of $A$ large enough from the estimates \eqref{j1} and \eqref{j2} that
\begin{align}\label{j3}
 (1-\varepsilon)\leq \frac{\abs{C_{z,A}(t,s)}}{A^3}C_{z,Q,A}^1 \leq (1+\varepsilon),
\end{align}
where $C_{z,Q,A}^1$ is some constant uniformly bounded from above and below (independently of the data $z,Q,A$). Denote $\wt{C}_z = C_{z,A}(t_z,s_z)$ for some fixed $(t_z,s_z)\in F$ so that especially \eqref{j3} is valid with $\wt{C}_z$ in place of $C_{z,A}(t,s).$ Consequently, 
\begin{align}\label{j4}
	\sup_{(t,s)\in F}\frac{\abs{C_{z,A}(t,s) - \wt{C}_z}}{A^3} \lesssim \varepsilon. 
\end{align}

Then,  we write out the error term to the extent that
	\begin{align*}
	\wt{f}_P(z) &= H_{\gamma}^*\Big( \frac{g_W}{H_{\gamma}g_P}H_{\gamma}\big( \frac{f}{H_{\gamma}^*g_W}\big)\Big)(z) \\
	&=\int_{I(z,+,W)}\Big(\frac{g_W}{H_{\gamma}g_P}\Big)(z+\gamma(t))H_{\gamma}\big( \frac{f}{H_{\gamma}^*g_W}\big)(z+\gamma(t))\frac{\ud t}{t} \\
	&= \int_{I(z,+,W)}\int_{I(z+\gamma(t),-,Q)}C_{z,A}(t,s)f(z+\gamma(t)-\gamma(s)) \frac{\ud s}{s}\frac{\ud t}{t} = \wt{C}_zI_zf +  I_{\Delta,z}f,
	\end{align*}
	where $I_zf$ was defined on the line \eqref{eq:Iz} and 
	\begin{align}
	I_{\Delta,z}f = \int_{I(z,+,W)}\int_{I(z+\gamma(t),-,Q)} (C_{z,A}(t,s) - \wt{C}_z)f(z+\gamma(t)-\gamma(s)) \frac{\ud s}{s}\frac{\ud t}{t}. 
	\end{align}	
	We apply the estimates
		\begin{align}\label{xxx}
	\abs{I(z,+,W)}\lesssim \ell(I),\qquad \abs{I(z+\gamma(t),-,Q)}\lesssim\frac{\ell(I)}{A}
	\end{align}
	and  \eqref{j4} to find
	\begin{align*}
		\abs{I_{\Delta,z}f} &\leq \Norm{f}{\infty}\int_{I(z,+,W)}\int_{I(z+\gamma(t),-,Q)}\abs{C_{z,A}(t,s) - \wt{C}_z} \frac{\ud s}{\abs{s}} \frac{\ud t}{\abs{t}} \\ 
			&\lesssim \Norm{f}{\infty}\sup_{(t,s)\in F} \frac{\abs{C_{z,A}(t,s) - \wt{C}_z}}{A^3} \lesssim \varepsilon\Norm{f}{\infty}.
	\end{align*}
	This estimate is of the desired form.
	Then, we analyse the term $I_zf.$ By the lines \eqref{density}, \eqref{density1} and using the zero-mean of the function $f,$ we write
	\begin{align*}
		I_zf =  \int_{I(z,+,W)}\int_{I(z+\gamma(t),-,Q)} f(h_z(t,s))\left( 1- \psi_z(h_z(s,t))\right)\frac{\ud s}{s}\frac{\ud t}{t},
	\end{align*}
	where 
	\[
	\psi_z(x) = \frac{\vartheta_z(c_Q)}{\vartheta_z(x)} = \frac{\abs{t_x-s_x}s_xt_x}{\abs{t_{c_Q}-s_{c_Q}}s_{c_Q}t_{c_Q}}.
	\]
	As
	\begin{align*}
			\abs{s_a-A\ell(I)},\abs{-t_a-A\ell(I)}\lesssim \ell(I),\qquad a\in \{x,c_Q\}
	\end{align*}
	we find that $\psi_z(x) \to 1,$ as $A\to\infty,$ and independently of the data $x,Q.$ Then, choosing $A$ so large that $\abs{1-\psi_z}\leq \varepsilon$ and again using the estimates on the line \eqref{xxx}, we find that 
	\begin{align*}
	\wt{C}_z\abs{I_zf} &\lesssim A^3 \int_{I(z,+,W)}\int_{I(z+\gamma(t),-,Q)} \abs{f(h_z(t,s))}\abs{1-\psi_z(h_z(s,t))}\frac{\ud s}{\abs{s}}\frac{\ud t}{\abs{t}} \\ 
	&\lesssim A^3\Norm{f}{\infty} \ell(I) \frac{\ell(I)}{A} \frac{\varepsilon}{A^2\ell(I)^2} = \varepsilon \Norm{f}{\infty}.
	\end{align*}
\end{proof}
%
%We combine Propositions \ref{prop:piece1} and \ref{prop:piece3} into the following upgraded version.
%
%\begin{prop}\label{prop:pieceA} Let $f\in L^1_{\loc}$ be supported on a parabolic rectangle $Q=I\times J.$
%Then, for all $A$ large enough (independently of $Q$), the function $f$ can be written as 
%\begin{align}\label{A1}
%f  = \left[h_QH_{\gamma}^*g_W - g_W H_{\gamma}h_Q\right] + \left[ h_W H_{\gamma}g_P - g_PH_{\gamma}^*h_W\right]  + \wt{f}_P,
%\end{align}  
%where
%\begin{align}\label{A2}
%h_Q = \frac{f}{H_{\gamma}^*g_W},\qquad h_W = \frac{g_W H_{\gamma}h_Q}{H_{\gamma}g_P},\qquad \wt{f}_P = g_PH_{\gamma}^*\Big( \frac{g_W}{H_{\gamma}g_P}H_{\gamma}\big( \frac{f}{H_{\gamma}^*g_W}\big)\Big)
%\end{align} 
%and the following estimates hold,
%	\begin{align}\label{A3}
%\abs{h_Q} \lesssim_{A} \abs{f},\qquad 	\abs{h_W}\lesssim_{A} \Norm{f}{\infty} 1_W.
%\end{align}
%Moreover, suppose that $\int_Q f = 0$ and let $\varepsilon>0.$ Then, for all $A$ large enough (independently of $Q$), there holds that
%	\begin{align}\label{A4}
%\abs{\wt{f}_P}\lesssim  \varepsilon\Norm{f}{\infty}1_P.
%	\end{align}
%\end{prop}

\subsubsection{The last two iterations} Next, we repeat the contents of the previous Section \ref{sec:12}, but this time beginning from the rectangle $P$ instead of $Q.$
% Recall, that the setup is symmetric with respect to the reflection $\Xi,$ see Figure \ref{fig:all2}.  
For the above arguments to pass through a second time we need to respect the symmetry present in the first iteration of the argument.  The only nonsymmetric object with respect to the reflection $\Xi$ in the statement of Proposition \ref{prop:pieceA} is $g_W.$ Hence, this time, we simply use the function $u_W = g_W\circ\Xi$ in place of $g_W.$ 
%Then, the symmetric version of Proposition \ref{prop:pieceA} is the following.

\begin{prop}\label{prop:pieceB} Let $f\in L^1_{\loc}$ be supported on a parabolic rectangle $P.$
	Then, for all $A$ large enough (independently of $P$), the function $f$ can be written as 
	\begin{align}\label{B1}
		f = \big[o_PH_{\gamma}^*u_W-u_WH_{\gamma}o_P\big] + \big[o_WH_{\gamma}g_Q - g_QH_{\gamma}^*o_W\big] + \wt{f}_Q,
	\end{align}
	where
	\begin{align}\label{B2}
		o_P = \frac{f}{H_{\gamma}^*u_W},\qquad o_W = \frac{u_WH_{\gamma}o_P}{H_{\gamma}g_Q},\qquad \wt{f}_Q = g_QH_{\gamma}^*\Big(\frac{u_W}{H_{\gamma}g_Q}H_{\gamma}\big(\frac{f}{H_{\gamma}^*u_W}\big)\Big),
	\end{align}
	and the following estimates hold
	\begin{align}\label{B3}
	\abs{o_P} \lesssim_{A} \abs{f},\qquad 	\abs{o_W}\lesssim_{A} \Norm{f}{\infty} 1_W.
	\end{align}
	
	Moreover, suppose that   $\int_P f = 0$ and let $\varepsilon>0.$   Then, for all $A$ large enough (independently of $P$), there holds that
		\begin{align}\label{B4}
		\abs{\wt{f}_Q}\lesssim  \varepsilon\Norm{f}{\infty}1_Q.
		\end{align}

\end{prop}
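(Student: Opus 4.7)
The plan is to deduce Proposition \ref{prop:pieceB} from Proposition \ref{prop:pieceA} via the reflection symmetry $\Xi$ introduced in \eqref{map:reflect}. First, the entire geometric construction of Section \ref{ss:gf} is invariant under $\Xi$: one has $\Xi(Q)=P$, $\Xi(P)=Q$, $\Xi(W)=W$, $\Xi(P^c)=P^c$, and correspondingly the sets $\phi(a,\pm,B)$ and intervals $I(a,\pm,B)$ transform compatibly. The crucial analytic ingredient is that $\Xi$ intertwines the parabolic Hilbert transform and its adjoint with themselves:
\begin{align*}
H_\gamma(g\circ\Xi) = (H_\gamma g)\circ\Xi,\qquad H_\gamma^*(g\circ\Xi)=(H_\gamma^*g)\circ\Xi.
\end{align*}
This follows from the pointwise identity $\Xi(x-\gamma(t)) = \Xi(x)-\gamma(-t)$ (immediate from $\Xi(a)=(2w_d-a_1,a_2)$ and $\gamma(-t)=(-t,t^2)$) together with the oddness-compatibility of the kernel: $\ud(-t)/(-t) = \ud t/t$.

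Now let $f$ be supported on $P$ with $\int f=0$ and set $F := f\circ\Xi$. Then $F$ is supported on $\Xi(P) = Q$, and, since $\Xi$ is an isometry with unit Jacobian, $\int F = \int f = 0$ and $\Norm{F}{\infty}=\Norm{f}{\infty}$. Applying Proposition \ref{prop:pieceA} to $F$ yields
\begin{align*}
F = \bigl[h_Q H_\gamma^* g_W - g_W H_\gamma h_Q\bigr] + \bigl[h_W H_\gamma g_P - g_P H_\gamma^* h_W\bigr] + \wt{F}_P,
\end{align*}
with $|h_Q|\lesssim_A|F|$, $|h_W|\lesssim_A\Norm{F}{\infty}1_W$, and $|\wt{F}_P|\lesssim\varepsilon\Norm{F}{\infty}1_P$. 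Composing both sides with $\Xi$ and using the intertwining identity, $g_W\circ\Xi=u_W$, $g_P\circ\Xi=1_P\circ\Xi=1_Q=g_Q$, and $1_W\circ\Xi=1_W$, we obtain
\begin{align*}
f = \bigl[o_P H_\gamma^* u_W - u_W H_\gamma o_P\bigr] + \bigl[o_W H_\gamma g_Q - g_Q H_\gamma^* o_W\bigr] + \wt{f}_Q,
\end{align*}
where $o_P:=h_Q\circ\Xi$, $o_W:=h_W\circ\Xi$, and $\wt{f}_Q:=\wt{F}_P\circ\Xi$. A direct algebraic check confirms that these expressions agree with \eqref{B2}; for instance $o_P = (F/H_\gamma^*g_W)\circ\Xi = f/H_\gamma^*u_W$, and similarly for $o_W$ and $\wt{f}_Q$.

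The bounds \eqref{B3} and \eqref{B4} transfer verbatim under composition with $\Xi$, as $\Xi$ preserves pointwise absolute values and the $L^\infty$-norm, and because $1_W\circ\Xi=1_W$, $1_P\circ\Xi=1_Q$. The main (and essentially only) obstacle in the argument is verifying the intertwining $H_\gamma(g\circ\Xi)=(H_\gamma g)\circ\Xi$; once this is in hand, the proof is a formal consequence of Proposition \ref{prop:pieceA}. It is worth emphasizing that this intertwining is a special feature of the parabolic kernel under a \emph{vertical} reflection (reflecting $\gamma(t)=(t,t^2)$ across a vertical line lands us back on $\gamma$ via $t\mapsto -t$), which is precisely why the asymmetric choice of $u_W = g_W\circ\Xi$ in Section \ref{sect:auxf} is the correct one to restore symmetry in this second stage.
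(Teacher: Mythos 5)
Your proposal is correct, and it rests on the same symmetry idea the paper invokes, but you execute it differently: the paper proves Proposition \ref{prop:pieceB} by \emph{re-running} the whole argument of Section \ref{sec:12} starting from $P$, with $u_W=g_W\circ\Xi$ replacing $g_W$ and the symmetry guaranteeing that all estimates go through; you instead reduce Proposition \ref{prop:pieceB} \emph{formally} to Proposition \ref{prop:pieceA} by conjugating with $\Xi$, the key new ingredient being the intertwining $H_\gamma(g\circ\Xi)=(H_\gamma g)\circ\Xi$ and its adjoint analogue, which you correctly derive from $\Xi(x-\gamma(t))=\Xi(x)-\gamma(-t)$ and the invariance of $\ud t/t$ under $t\mapsto-t$. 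This buys a genuinely shorter and more rigorous derivation (no estimates need to be repeated, and the mirrored properties of $u_W$ never need to be checked), at the cost of having to verify the exact set identities $\Xi(P)=Q$, $\Xi(Q)=P$, $\Xi(W)=W$; these do hold, because the lowest point of $W$ is the crossing of the two parabolas swept by the bottom-right corner of $Q$ and the bottom-left corner of $P$, so $w_d$ is precisely the horizontal midpoint between $Q$ and $P$ and your $\Xi$ coincides with the reflection exchanging $Q$ and $P$ -- a point you should state explicitly since the whole reduction hinges on it being an exact, not approximate, symmetry. One small slip: $\Xi(P^c)=P^c$ is false as written, since $P^c\subset P$ and $\Xi(P)=Q$; the correct statement is that $\Xi$ maps $P^c$ onto the analogous central portion of $Q$. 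This claim is never used in your argument, so it is harmless, but it should be removed or corrected. With that, the remaining steps (transfer of support, of $\int F=\int f=0$, of $\Norm{F}{\infty}=\Norm{f}{\infty}$, the algebraic identification of $o_P,o_W,\wt f_Q$ with \eqref{B2}, and the transfer of \eqref{A3}--\eqref{A4} into \eqref{B3}--\eqref{B4} using $1_W\circ\Xi=1_W$ and $1_P\circ\Xi=1_Q$) are all sound, and the uniformity in $A$ independent of $P$ is inherited from the uniformity in Proposition \ref{prop:pieceA}.
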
 

\subsection{Closing the argument}\label{sect:close}
\begin{proof}[Proof of Theorem \ref{thm:main1}] Write
	\begin{align*}
	\int_Q\abs{b-\ave{b}_Q} = \int bf,\quad  	f = (\sigma-\ave{\sigma}_Q)1_Q,\quad \sigma = \sign(b-\ave{b}_Q),\quad \sign(\varphi) = \frac{\overline{\varphi}}{\abs{\varphi}}1_{\varphi\not=0}.
	\end{align*}
	According to the line \eqref{A1} of Proposition \ref{prop:pieceA} we factorize the function $f$ as
	\begin{align*}
		\int bf = \int b\left[h_QH_{\gamma}^*g_W - g_W H_{\gamma}h_Q\right] + \int b\left[ h_W H_{\gamma}g_P - g_PH_{\gamma}^*h_W\right]  + \int b\wt{f}_P.
	\end{align*}
	Then, by $\abs{f}\leq 2,$ and the estimates on the line \eqref{A3}, the first term above with brackets is controlled as
	\begin{equation}\label{h}
		\begin{split}
			\Babs{\int b\left[h_QH_{\gamma}^*g_W - g_W H_{\gamma}h_Q\right]} &= \Babs{\int g_W[b,H_{\gamma}]h_Q} \leq \Norm{[b,H_{\gamma}]}{L^p\to L^p}\Norm{g_W}{L^{p'}}\Norm{h_Q}{L^p} \\
		&\lesssim_{A}\Norm{[b,H_{\gamma}]}{L^p\to L^p}\abs{W}^{\frac{1}{p'}}\abs{Q}^{\frac{1}{p}} \sim \Norm{[b,H_{\gamma}]}{L^p\to L^p}\abs{Q},
		\end{split}
	\end{equation}
	where in the last estimate we used the estimates \eqref{size1}. The second term with brackets is similarly estimated to the same upper bound.
	Proceeding, accordingly to the line \eqref{B1} of Proposition \ref{prop:pieceB}, we factorize the function $\wt{f}_P$ as 
	\begin{align*}
		\int b\wt{f}_P = \int b\big[o_PH_{\gamma}^*u_W-u_WH_{\gamma}o_P\big] + \int b\big[o_WH_{\gamma}g_Q - g_QH_{\gamma}^*o_W\big] + \int b\wt{(\wt{f}_P)}_Q.
	\end{align*}
	 There holds that
	\begin{align}\label{mean:zero}
	\int_Q \wt{(\wt{f}_P)}_Q= \int_P \wt{f}_P = \int_Q f  = 0,
	\end{align}
	all of which are easy to check by moving the adjoints, see e.g. the similar argument for \eqref{eq:zm}.
	Then, by \eqref{A4} and \eqref{B4} there holds that $\abs{\wt{(\wt{f}_P)}_Q}\lesssim \varepsilon^2\lesssim 1.$
	Hence, the estimates on the lines \eqref{B3} and \eqref{B4} allow us to conclude, similarly as the estimate \eqref{h}, that
	\begin{align*}
		\Babs{\int b\big[o_PH_{\gamma}^*u_W-u_WH_{\gamma}o_P\big]} +  \Babs{\int  b\big[o_WH_{\gamma}g_Q - g_QH_{\gamma}^*o_W\big] }\lesssim_{A}  \Norm{[b,H_{\gamma}]}{L^p\to L^p}\abs{Q}.
	\end{align*}
	By \eqref{mean:zero} we find
	\begin{align*}
		 \Babs{\int b\wt{(\wt{f}_P)}_Q} = \Babs{\int_Q \big(b-\ave{b}_Q\big)\wt{(\wt{f}_P)}_Q}\leq \Norm{\wt{(\wt{f}_P)}_Q}{\infty}\int_Q\abs{b-\ave{b}_Q} \lesssim \varepsilon^{2}\int_Q\abs{b-\ave{b}_Q} .
	\end{align*}
	Putting all of the above together, we conclude that for some absolute constants $C_{A}, C$ (independent of $Q$) there holds that  
	\begin{align}\label{absorb}
		\int_Q\abs{b-\ave{b}_Q} \leq C_{A}\Norm{[b,H_{\gamma}]}{L^p\to L^p}\abs{Q} + C\varepsilon^2\int_Q\abs{b-\ave{b}_Q}.
	\end{align}
	As $b\in L^1_{\loc},$ the common term shared on both sides of the estimate \eqref{absorb} is finite. Hence, as $\varepsilon$ can be made arbitrarily small, by choosing $A$ sufficiently large, by absorbing the common term to the left-hand side  we find from \eqref{absorb} that
	\begin{align*}
		\int_Q\abs{b-\ave{b}_Q} \lesssim_A \Norm{[b,H_{\gamma}]}{L^p\to L^p}\abs{Q}.
	\end{align*}
	A division by $\abs{Q}$ closes the argument.
\end{proof}

\section{Extensions, open problems} In this section we present some extensions of Theorem \ref{thm:main1B} and Theorem \ref{thm:main2} along with some open problems. We are more interested in the parabolic case and hence only record the following Theorem \ref{thm:main2gen} as an extension to Theorem \ref{thm:main2}.

\begin{thm}\label{thm:main2gen} Let $b\in L^1_{\loc}(\R^d;\C)$ and $p\in(1,\infty).$ Then,
	\begin{align*}
	\Norm{b}{\bmo(\R^d)} \sim \sum_{i=1}^d\Norm{[b,H_{e_i}]}{L^p(\R^d)\to L^p(\R^d)}.
	\end{align*}
\end{thm}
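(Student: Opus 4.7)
The upper bound $\sum_i \Norm{[b, H_{e_i}]}{L^p(\R^d)\to L^p(\R^d)} \lesssim \Norm{b}{\bmo(\R^d)}$ is obtained exactly as in the proof of Theorem~\ref{thm:main2}: slice in the $i$-th variable, apply the one-parameter commutator upper bound \eqref{line1} to each slice, and invoke the $d$-dimensional analogue of Lemma~\ref{lem:bmo} (with the same proof by iteration in coordinates), which gives $\Norm{b}{\bmo(\R^d)} \sim \max_i \esssup \Norm{b}{\BMO_{x_i}(\R)}$.

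For the lower bound, I would replicate the proof of Theorem~\ref{thm:main2}, replacing the four-rectangle loop by a $2d$-rectangle loop. Fix $R_0 = I_1 \times \cdots \times I_d$ and a large $A > 1$; define $R_k = R_{k-1} + A\ell(I_k) e_k$ for $1 \leq k \leq d$ and $R_{d+k} = R_{d+k-1} - A\ell(I_k) e_k$ for $1 \leq k \leq d$, so that $R_{2d} = R_0$, and set $\eta_k = e_k$ for $k \leq d$ and $\eta_k = -e_{k-d}$ for $d < k \leq 2d$. After dualizing $\int_{R_0}\abs{b - \ave{b}_{R_0}} = \int bf$ as in \eqref{eq:dualization}, iterate the approximate weak factorization for $k = 1, \ldots, 2d$:
\[
\wt{f}_{R_{k-1}} = \big[h_{R_{k-1}} H_{\eta_k}^* g_{R_k} - g_{R_k} H_{\eta_k} h_{R_{k-1}}\big] + \wt{f}_{R_k},
\]
with $\wt{f}_{R_0} := f$, $g_{R_k} = 1_{R_k}$, $h_{R_{k-1}} = \wt{f}_{R_{k-1}}/H_{\eta_k}^* g_{R_k}$ and $\wt{f}_{R_k} = g_{R_k} H_{\eta_k} h_{R_{k-1}}$. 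The one-variable estimates $H_{\eta_k}^* g_{R_k} \sim 1/A$ on $R_{k-1}$ (direct analogues of \eqref{wd}--\eqref{wdd}) are valid by orthogonality of the coordinate directions, so the factorization is well-defined and $\abs{h_{R_{k-1}}} \lesssim_A 1_{R_{k-1}}$.

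The crux of the argument is the pair of error estimates $\Norm{\wt{f}_{R_d}}{\infty} \lesssim A^{-1} \Norm{f}{\infty}$ and $\Norm{\wt{f}_{R_{2d}}}{\infty} \lesssim A^{-2} \Norm{f}{\infty}$. For $z \in R_d$, the $d$-fold iterated integral expressing $\wt{f}_{R_d}(z)$ sweeps out exactly $R_0$ (the analogue of \eqref{eq:crux1}, which again follows from the orthogonality of the $e_k$). Using $\int_{R_0} f = 0$, one subtracts the constant density $\prod_k (A\ell(I_k))^{-1}$ from the integrand $\prod_k (t_k')^{-1}$ and controls the difference by a $d$-fold telescoping identity combined with the mean value theorem applied to $x \mapsto x^{-1}$, obtaining the bound $\lesssim A^{-(d+1)}/\prod_k \ell(I_k)$. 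Combining this with the Jacobian factor $A^d$ coming from the substitutions $t_k = t_k'/(AH_{\eta_k}^*g_{R_k})$ and with integration over the domain of total measure $\prod_k \ell(I_k)$ yields the first bound. Moving adjoints as in \eqref{eq:zm} gives $\int \wt{f}_{R_d} = 0$, and the same argument applied to the return half of the chain yields the second bound.

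Substituting the full factorization into $\int bf$, identifying each $\int\big[h_{R_{k-1}}H_{\eta_k}^*g_{R_k}-g_{R_k}H_{\eta_k}h_{R_{k-1}}\big]b = -\int g_{R_k}[b, H_{\eta_k}] h_{R_{k-1}}$, using $H_{-e_i} = -H_{e_i}$ to collapse the $2d$ commutator norms into $d$ distinct ones, and applying Hölder together with $\abs{R_k} \sim_A \abs{R_0}$ yields
\[
\int_{R_0} \abs{b - \ave{b}_{R_0}} \leq C_A \sum_{i=1}^d \Norm{[b, H_{e_i}]}{L^p \to L^p} \abs{R_0} + C A^{-2} \int_{R_0} \abs{b - \ave{b}_{R_0}};
\]
since $b \in L^1_{\loc}$ the common term is finite, and for $A$ large enough absorbing it to the left and dividing by $\abs{R_0}$ concludes the argument. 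The principal obstacle is the $d$-fold MVT/telescoping estimate that replaces the two-term computation of the $d=2$ case; because the coordinate directions are orthogonal, no genuinely new geometry is required, and the bookkeeping decouples cleanly into $d$ independent factors.
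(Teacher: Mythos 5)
Your proposal is correct and is exactly the adaptation the paper has in mind: the paper proves Theorem \ref{thm:main2gen} only by remarking that the proof of Theorem \ref{thm:main2} adapts, and your $2d$-rectangle loop with one approximate-weak-factorization step per coordinate direction, the exact sweep of $R_0$ by the $d$-fold iterated integral, the telescoping/mean-value estimate giving the $A^{-1}$ gain per half-loop, and the final absorption reproduce that adaptation faithfully. No discrepancies worth noting.
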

It is clear how to adapt the proof of Theorem \ref{thm:main2} to prove Theorem \ref{thm:main2gen}.
Then, we move to discuss the immediately available extensions to Theorem \ref{thm:main1B}.

\subsection{Monomial curves}
A function $\gamma:\R\to\R^d$ is said to be a monomial curve if it is of the form
	\begin{align*}
		\gamma(t) = 
		\begin{cases}
		(\varepsilon_1\abs{t}^{\beta_1},\dots,\varepsilon_n\abs{t}^{\beta_d}),\, t > 0 \\ 
		(\delta_1\abs{t}^{\beta_1},\dots,\delta_n\abs{t}^{\beta_d}),\, t\leq 0, 
		\end{cases}
	\end{align*}
	where $\beta_i>0,$ $\varepsilon_i,\delta_i \in \{-1,1\},$ and there exists at least one index $j$ so that $\varepsilon_j\not=\delta_j.$ Let $\beta,\vare,\delta$ denote these parameter tuples.
Associated to a monomial curve $\gamma,$ and hence to the parameter tuple $\beta = (\beta_1,\dots,\beta_d),$ is the related bmo space; let $\calR_{\beta} = \calR_{\gamma}$ denote the collection of all rectangles $Q=I_1\times\dots\times I_d$ parallel to the coordinate axes such that $\ell(I_1)^{\frac{1}{\beta_1}}=	\ell(I_2)^{\frac{1}{\beta_2}} =\dots=\ell(I_d)^{\frac{1}{\beta_d}},$
and define the space $\BMO_{\beta}(\R^d)$ by the norm
\begin{align*}
	\Norm{b}{\BMO_{\beta}(\R^d)} = \sup_{Q\in\calR_{\beta}}\fint_Q\abs{b-\ave{b}_Q}.
\end{align*}
Notice that $\Norm{b}{{\BMO_{\beta}(\R^d)}}$ depends only on the vector $\beta$ and not on $\varepsilon,\delta\in\{-1,1\}^d.$  With this notation and $\gamma(t) = (t,t^2)$ we have $\BMO_{\gamma} = \BMO_{(1,2)}.$

For Theorem \ref{thm:main1B} the extension is the following. 
\begin{thm}\label{thm:main1gen}  Let $b\in L^1_{\loc}(\R^2;\C),$ let $p\in(1,\infty)$ and let $\gamma:\R\to\R^2$ be a monomial curve with the associated parameter tuples $\beta = (\beta_1,\beta_2),$ $\varepsilon = (\vare_1,\vare_2)$ and $\delta = (\delta_1,\delta_2).$ Let $\vare_i=\delta_i$ for exactly one index $i\in\{1,2\}.$  Then,
	\begin{align*}
		\Norm{b}{\BMO_{\beta}(\R^2)} \sim \Norm{[b,H_{\gamma}]}{L^p(\R^2)\to L^p(\R^2)}.
	\end{align*}
\end{thm}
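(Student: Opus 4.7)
The strategy is to replay the proof of Theorem \ref{thm:main1}, replacing the standard parabola $\gamma(t)=(t,t^2)$ by the given monomial curve. Without loss of generality assume the matching index is $i=2$, so $\varepsilon_2=\delta_2$ and $\varepsilon_1\neq\delta_1$: along the non-matching coordinate the curve reverses sign with $t$, while along the matching coordinate it stays on one side, exactly as the parabola does. Given $Q=I_1\times I_2\in\calR_\beta$, set $\ell=\ell(I_1)^{1/\beta_1}=\ell(I_2)^{1/\beta_2}$ (the intrinsic scale of $Q$), let $I_A=[A\ell,(A+N)\ell]$, and define the translation vector $\tau=\gamma(A\ell)-\gamma(-A\ell)$; by the sign hypothesis $\tau$ is parallel to the non-matching axis and has magnitude $\sim A^{\beta_1}\ell(I_1)$ in that coordinate and $0$ in the other. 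Then place $P=Q+\tau$ (plus the small additive shift corresponding to $N\ell$, as in the original setup), and define $\wt Q,\wt P,W$ exactly as in Section \ref{ss:gf} by flowing along $\pm\gamma(I_A)$. The reflection $\Xi$ is now across the vertical line through the lowest (in the matching coordinate) point of $W$.

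The next step is to reverify the three geometric lemmas (\ref{lem:geom12}, \ref{lem:geom1}, \ref{lem:geom2}) in this setting. The proofs depend only on the following two quantitative facts which remain valid for any monomial $\gamma$: for $t\in\pm I_A$ the components satisfy $|\gamma_j'(t)|\sim A^{\beta_j-1}\ell(I_1)^{\beta_j-1}\cdot|\gamma_j'(\ell)|$, and on the scale $\ell$ the curve $\gamma$ ``traverses'' an essentially rectangular neighborhood of dimensions $\ell(I_1)\times\ell(I_2)$. Consequently, arguing exactly as in Lemma \ref{lem:geom12} and using $\pi_j(v_{rt}-v_{rb})=\ell(I_2)$, one obtains $|I(x,+,W)|=\tfrac12\ell(I_1)+o_A(\ell(I_1))$ uniformly, and Lemma \ref{lem:geom2} follows from the identical rectangle-exhaustion argument with the dyadic sets $P^{lb}(r),P^{rt}(r)$ defined relative to the scales $\ell(I_1)/A$ and $\ell(I_2)/A^{\beta_2/\beta_1}$. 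The auxiliary functions $g_Q,g_P,g_W,u_W$ of Section \ref{sect:auxf} are constructed verbatim since only their properties $(i),(ii),(iii)$ enter subsequent arguments.

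With this geometric machinery in hand one proves the two ``piece'' propositions (the analogues of Propositions \ref{prop:pieceA} and \ref{prop:pieceB}). The algebraic factorization formulas \eqref{A1}, \eqref{A2}, \eqref{B1}, \eqref{B2} are identical, so the only point requiring attention is the estimate of the error terms $\wt f_P,\wt{(\wt f_P)}_Q$. Here the crucial step is the change of variables in
\[
I_zf=\int_{I(z,+,W)}\int_{I(z+\gamma(t),-,Q)}f(z+\gamma(t)-\gamma(s))\frac{\mathrm ds}{s}\frac{\mathrm dt}{t},
\]
via the map $h_z(t,s)=z+\gamma(t)-\gamma(s)$, with $t\sim A\ell,-s\sim A\ell$. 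Surjectivity onto $Q$ is the content of the analogue of \eqref{eq:crux2}; injectivity follows because the sign hypothesis on $\varepsilon_1,\delta_1$ forces $\phi(y,-,Q)$ for distinct $y\in\phi(z,+,W)$ to lie in disjoint translates of a fixed convex cone (exactly as in the parabolic case). The Jacobian becomes
\[
\det J_{h_z}(t,s)=\det\begin{bmatrix}\gamma_1'(t) & -\gamma_1'(s)\\ \gamma_2'(t) & -\gamma_2'(s)\end{bmatrix}=\gamma_2'(t)\gamma_1'(s)-\gamma_1'(t)\gamma_2'(s),
\]
which on the range $t,-s\sim A\ell$ has absolute value $\sim A^{\beta_1+\beta_2-2}\ell(I_1)\ell(I_2)/(\ell\cdot\ell)\cdot\ell^2$, and more importantly is essentially constant in the sense that $\vartheta_z(x)/\vartheta_z(c_Q)\to 1$ as $A\to\infty$, uniformly in $x,z,Q$. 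This is the precise replacement for the identity $\det J_{h_z}=2(t-s)$ of the parabolic proof. Once this asymptotic constancy is established, the $\varepsilon$-smallness of $\wt f_P$ on rectangles of zero-mean $f$ follows from the same two-step comparison as in the proof of \eqref{A4}.

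I expect the main obstacle to be the asymptotic-constancy analysis of $\vartheta_z$ when $\beta_1\neq 1$ or $\beta_2\neq 2$: the parabolic Jacobian $2(t-s)$ is linear and manifestly satisfies $|t-s|\sim A\ell$, whereas the general Jacobian is a difference of products of the form $\beta_j|t|^{\beta_j-1}\cdot\beta_k|s|^{\beta_k-1}$, and one has to expand around the center of $Q$ and exploit $|t-(-s)|\ll A\ell$ combined with mean-value-theorem estimates to show the ratio $\vartheta_z(x)/\vartheta_z(c_Q)$ approaches $1$ at rate $O(A^{-1})$. After this technical computation, the closing argument of Section \ref{sect:close} transfers without change: absorption of the common term proportional to $\int_Q|b-\ave{b}_Q|$ (which is finite since $b\in L^1_{\loc}$) into the left-hand side yields the BMO bound. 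The upper bound direction is classical for monomial curves (the maximal function along a monomial curve is bounded and a standard sparse-domination or $T(1)$ argument applies), so combined with the lower bound one obtains the stated equivalence.
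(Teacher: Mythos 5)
Your proposal takes essentially the same route as the paper: the upper bound is invoked as already known (the paper attributes it to \cite{Bongers2019commutators} rather than treating it as classical), and the lower bound is obtained by transporting the $Q$, $W(Q)$, $P(Q)$ geometry along the single non-matching coordinate and rerunning the approximate weak factorization and absorption argument of Theorem \ref{thm:main1}. Your additional discussion of the Jacobian $\gamma_1'(s)\gamma_2'(t)-\gamma_1'(t)\gamma_2'(s)$ and the asymptotic constancy of the density $\vartheta_z$ only fleshes out details the paper's own (equally sketch-level) verification leaves implicit, so the two arguments match in substance.
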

\begin{proof}  The upper bound was proved in \cite{Bongers2019commutators} and holds for any $d\in\N$ and any monomial curve $\gamma:\R\to\R^d.$ Here we only need that $\vare_i\not=\delta_i$ for at least a single index $i\in \{1,2\}$. 
	
The lower bound follows by similar arguments as Theorem \ref{thm:main1} did. We need to make sure that a suitable geometry exists that allows for the approximate weak factorization (e.g. propositions \ref{prop:pieceA} and \ref{prop:pieceB}) which in turn allows the abstract awf argument (e.g. the proof of Theorem \ref{thm:main1} in Section \ref{sect:close}) to pass through.

In the parabolic case the geometry was formed by the three sets $Q,W(Q),P(Q)$ for $Q\in\calR_{\gamma},$ see Figure \ref{fig:all2}.  Next, we describe the correct geometry for any monomial curve $\gamma:\R\to\R^2,$ with exactly one index $\vare_i\not=\delta_i,$ and a rectangle $Q=I_1\times I_2\in\calR_{\beta}$. Set 
\begin{align*}
	\overline{\calJ}  &= \{i\in\{1,2\}: \varepsilon_i\not=\delta_i\},\qquad &\calJ&= \{1,2\}\setminus\overline{\calJ}, \\
	 \ell(Q) &= \ell(I_1)^{\frac{1}{\beta_1}} = \ell(I_2)^{\frac{1}{\beta_2}},\qquad &I_A&= [A\ell(Q),(A+N)\ell(Q)]
\end{align*}
and 
\begin{align*}
P(Q) &= P_1\times P_2,\qquad 	P_i = \begin{cases}
 \big((2A+N)\ell(Q)\big)^{\beta_i}	e_i + I_i ,\quad &i\in\overline{\calJ}, \\
	I_i,\quad &i\in\calJ
	\end{cases}
\end{align*}
and
\begin{align*}
\wt{Q} = \{x+\gamma(t): x\in Q,\quad t\in I_A\},\qquad 
\wt{P}(Q) &= \{z+\gamma(t): z\in P(Q),\quad t\in -I_A\}, \\
W(Q) &= \wt{Q}\cap\wt{P}(Q).
\end{align*}
With the above setup it is easy to verify the following key points of the argument
\begin{align*}
\abs{Q} \sim 	\abs{W(Q)}\sim \abs{P(Q)},\qquad Q = \bigcup_{y\in\phi(z,+,W(Q))}\phi(y,-,Q).
\end{align*}
The geometries of the  two cases are similar, with the case $\vare_1 = -\delta_1$ and $\vare_2= \delta_2$ being almost identical to the case of the parabola.

\end{proof}

When $d\geq 3$ the argument for the lower bound in Theorem \ref{thm:main1gen} does not pass through as such and a description of an essentially different geometry is needed. A further goal will be to relax the assumptions concerning the sign tuples $\vare,\delta$.
We leave this to a future work.

\begin{prbl}\label{prob1} Extend the lower bound in Theorem \ref{thm:main1gen} to the largest possible class of monomial curves and to the case $d\geq 3.$ 
\end{prbl}

\subsection{Off-diagonal}

Recall the following characterization of the homogeneous Hölder space
\begin{align}\label{alpha}
	\Norm{b}{\dot{C}^{\alpha,0}(\R^d)} = \sup_{x\not=y\in \R^d}\frac{\abs{b(x)-b(y)}}{\abs{x-y}^{\alpha}} \sim \sup_{Q} \ell(Q)^{-\alpha}\fint_{Q}\abs{b-\ave{b}_Q},   
\end{align}
where the supremum is taken over \emph{all} cubes $Q\subset \R^d$ and $\alpha>0.$ The reader unfamiliar with \eqref{alpha} can essentially read the proof from the proof of Proposition \ref{prop:alpha} below. Considering the right-hand side of \eqref{alpha} we set the following definition.

\begin{defn}\label{defn:paralpha} Let $b\in L^1_{\loc}(\R^d;\C)$, let $\alpha>0$ and let $\beta = (\beta_1,\dots,\beta_d)$ be a tuple of exponents (possibly associated to a monomial curve). Then, we define the norm
	\begin{align}\label{alpha0}
		\Norm{b}{\dot{C}^{\alpha,0}_{\beta}(\R^d)} =  \sup_{Q\in\calR_{\beta}} \abs{Q}^{-\alpha}\fint_{Q}\abs{b-\ave{b}_Q}.
	\end{align}
\end{defn}
Notice the difference in normalizations before the integrals on the lines \eqref{alpha} and \eqref{alpha0}.
Next, we will connect the norm of Definition \ref{defn:paralpha} with a pointwise definition.
\begin{defn}\label{defn:paralpha2} Let $\gamma:\R\to\R^2$ and for each $x\in\R^2$ let us denote 
	\begin{align*}
		\mathcal{X}_{\gamma}(x) =  \phi_{\gamma}(x,+,\R^2)\cup\phi_{\gamma}(x,-,\R^2)	,\qquad \phi_{\gamma}(a,\pm,B) = \{a\pm\gamma(t)\in B: t\in \R\}.
	\end{align*} 
Then, we define the norm
	\begin{align*}
		\Norm{b}{\dot{C}^{\alpha,0}_{\gamma}(\R^2)} = \sup_{x\in\R^2}\sup_{y\in	\mathcal{X}_{\gamma}(x) }\frac{\abs{b(x)-b(y)}}{\prod_{i=1}^{2}\abs{x_i-y_i}^{\alpha}}.
	\end{align*}
\end{defn}

\begin{prop}\label{prop:alpha} Let $\gamma:\R\to\R^2$ be a monomial curve with the associated parameter tuple $\beta = (\beta_1,\beta_2)$ and  $\vare_i\not=\delta_i$ for exactly one index $i\in\{1,2\}.$ Let $b\in L^1_{\loc}(\R^d;\C).$ Then, there holds that 
\begin{align*}
	\Norm{b}{\dot{C}^{\alpha,0}_{\gamma}(\R^2)} \sim 	\Norm{b}{\dot{C}^{\alpha,0}_{\beta}(\R^2)}.
\end{align*}
\end{prop}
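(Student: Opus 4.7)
The plan is to prove the two inequalities by a geometric bridge between the pointwise H\"older hypothesis along curves and the oscillation bound on $\beta$-rectangles. The key dimensional observation is that if $y = x \pm \gamma(t)$ then $|x_i - y_i| = |\gamma_i(t)| = |t|^{\beta_i}$, and hence $\prod_{i=1}^2 |x_i - y_i|^\alpha = |t|^{\alpha(\beta_1+\beta_2)}$. For $Q = I_1 \times I_2 \in \calR_\beta$ with common parameter $r := \ell(I_i)^{1/\beta_i}$ one has $|Q| = r^{\beta_1+\beta_2}$, so both norms live at the same exponent $\alpha(\beta_1+\beta_2)$ in the natural scale $r \sim |t|$.

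For $\Norm{b}{\dot{C}^{\alpha,0}_{\gamma}} \lesssim \Norm{b}{\dot{C}^{\alpha,0}_{\beta}}$ I would telescope through nested $\beta$-rectangles. Given $x$ and $y = x \pm \gamma(t)$ with $|t| = r$, fix $Q_0 \in \calR_\beta$ containing both with $|Q_0| \sim r^{\beta_1+\beta_2}$, and construct nested sequences $Q_0 \supset Q_1^x \supset Q_2^x \supset \dots$ shrinking to $x$ with $|Q_{k+1}^x| = 2^{-(\beta_1+\beta_2)} |Q_k^x|$, and analogously at $y$. Lebesgue differentiation gives
\begin{align*}
|b(x) - \ave{b}_{Q_0}| \le \sum_{k \ge 0} |\ave{b}_{Q_{k+1}^x} - \ave{b}_{Q_k^x}| \lesssim \sum_{k\ge 0} |Q_k^x|^\alpha \Norm{b}{\dot{C}^{\alpha,0}_{\beta}} \lesssim |Q_0|^\alpha \Norm{b}{\dot{C}^{\alpha,0}_{\beta}},
\end{align*}
by summing the convergent geometric series in $2^{-k\alpha(\beta_1+\beta_2)}$, and the analogous bound at $y$. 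Adding and dividing by $\prod_i |x_i-y_i|^\alpha \sim |Q_0|^\alpha$ closes this direction, working with the continuous representative guaranteed by finiteness of the $\dot{C}^{\alpha,0}_\beta$ norm.

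The reverse direction reduces to the following geometric claim: for every $Q \in \calR_\beta$ and every $x, y \in Q$ there exist $\eta_1, \eta_2 \in \{-1,+1\}$ and $t, s \in \R$ with $|t|, |s| \lesssim r$ such that $y - x = \eta_1 \gamma(t) + \eta_2 \gamma(s)$; equivalently, an intermediate point $w = x + \eta_1 \gamma(t)$ satisfying $w \in \mathcal{X}_\gamma(x)$ and $y \in \mathcal{X}_\gamma(w)$. Granting the claim, the curve hypothesis yields
\begin{align*}
|b(x) - b(y)| \le |b(x) - b(w)| + |b(w) - b(y)| \lesssim (|t|^{\alpha(\beta_1+\beta_2)} + |s|^{\alpha(\beta_1+\beta_2)}) \Norm{b}{\dot{C}^{\alpha,0}_{\gamma}} \lesssim r^{\alpha(\beta_1+\beta_2)} \Norm{b}{\dot{C}^{\alpha,0}_{\gamma}},
\end{align*}
and averaging twice over $Q$ bounds $\fint_Q |b - \ave{b}_Q| \le \fint_Q\fint_Q |b(x)-b(y)|\,dx\,dy \lesssim |Q|^\alpha \Norm{b}{\dot{C}^{\alpha,0}_{\gamma}}$, which is exactly the required estimate.

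The main obstacle is establishing the geometric claim with the scale bound $|t|, |s| \lesssim r$, and this is precisely where the hypothesis $\vare_i = \delta_i$ for exactly one $i$ is consumed. Writing $y - x = (a_1, a_2)$ with $|a_i| \le r^{\beta_i}$, the four sign choices for $(\eta_1, \eta_2)$ each give a system for $(t,s)$ whose solvability depends on $\sign(a_1), \sign(a_2)$. For the parabola the systems reduce to $(t\pm s,\,t^2\pm s^2) = (a_1, a_2)$, and a short split on $\sign(a_2)$ and the ratio $|a_2|/a_1^2$ shows that one of them always admits a real solution with $|t|,|s| \lesssim r$, using $|a_2| \le r^2$. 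For a general monomial $\gamma$ with the sign hypothesis the same mechanism works: the two branches $\gamma(\R_+)$ and $\gamma(\R_-)$ differ in sign in exactly one coordinate, which is precisely the flexibility needed so that the union of achievable displacements under the four sign choices covers $Q - Q$ with parameters of size $O(r)$; this is the same geometric input that underlies the identity $Q = \bigcup_{y \in \phi(z,+,W(Q))} \phi(y,-,Q)$ exploited in the proof of Theorem \ref{thm:main1gen}.
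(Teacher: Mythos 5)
Your proposal is correct and follows essentially the same route as the paper: one direction via an intermediate point $z\in\calX_{\gamma}(x)\cap\calX_{\gamma}(y)$ lying in a fixed dilate of $Q$ together with double averaging over $Q\times Q$, the other via telescoping averages over nested $\calR_{\beta}$-rectangles spanned by $x,y$ and Lebesgue differentiation for that basis. In fact your case analysis for the intersection/solvability claim (the split on $\sign(a_2)$ and $|a_2|/a_1^2$, using the sign hypothesis to reach all four quadrants) supplies more detail than the paper, which simply asserts the existence of $z(x,y)\in 3Q$.
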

\begin{proof} Fix a rectangle $Q=I\times J\in\calR_{\beta}$ and for distinct $x,y\in Q$ pick a point $z(x,y)\in \calX_{\gamma}(x)\cap\calX_{\gamma}(y)\cap 3Q.$ That such a point exists follows from $\varepsilon_i\not=\delta_i$ for exactly one index $i\in\{1,2\}.$ Then, we estimate
	\begin{align*}
		\fint_Q\abs{b-\ave{b}_Q} &\lesssim \fint_{Q}\fint_{Q}\abs{b(x)-b(z(x,y))}\ud x\ud y \\
		&\lesssim 	\Norm{b}{\dot{C}^{\alpha,0}_{\gamma}(\R^2)}  \fint_{Q}\fint_{Q}\prod_{i=1}^2\abs{x_i-z(x,y)_i}^{\alpha}\ud x\ud y \\ 
		&\lesssim \Norm{b}{\dot{C}^{\alpha,0}_{\gamma}(\R^2)} \ell(I)^{\alpha}\ell(J)^{\alpha} = \Norm{b}{\dot{C}^{\alpha,0}_{\gamma}(\R^2)} \abs{Q}^{\alpha}.
	\end{align*}
	
Let $x\in\R^2$ and $y\in\calX_{\gamma}(x).$ Let $Q=I\times J\in\calR_{\beta}$ be the rectangle with the points $x,y$ as vertices. We let $Q_k(x) = I_k(x)\times J_k(x)\in\calR_{\beta}$ be the rectangle such that $x\in Q_k(x)\subset Q$ and $\ell(I_k(x))= 2^{-k}I.$ Similarly for the point $y.$ Then, we write
\begin{align*}
	b(x)-b(y) = \sum_{k=0}^{\infty} \big( \ave{b}_{Q_{k+1}(x)}-\ave{b}_{Q_{k}(x)}\big) -  \sum_{k=0}^{\infty}\big(\ave{b}_{Q_{k+1}(y)}-\ave{b}_{Q_{k}(y)}  \big),
\end{align*} 
where we use the Lebesgue differentiation theorem with $\calR_{\beta}$ rectangles. A comment on this. For the standard argument to pass through, we need to know that for each point $x\in \R^2$ there exists a sequence $Q_k(x)\to x$ with $\diam(Q_k(x))\to 0$ (obviously true) and that the related maximal function $M_{\calR_{\beta}}f(x) = \sup_{x\in Q\in\calR_{\beta}}\fint_Q\abs{f}$ is of weak type $(1,1).$ To see the weak type $(1,1)$, dominate $M_{\calR_{\beta}}\leq\sum_{i=1}^N M_{\calD_{\beta}^i}$ by a finite number of dyadic operators over some anisotropic dyadic grids $\calD_{\beta}^i.$ Such grids are constructed at least in  \cite{ClaOu2017}, and these grids have all the important properties that we would expect of a dyadic grid, hence, a standard argument shows that $M_{\calD_{\beta}^i}$ is of weak type $(1,1).$
With this detail in the clear, we estimate the second of the martingale differences as
\begin{align*}
	\sum_{k=0}^{\infty}\babs{\ave{b}_{Q_{k+1}(y)}-\ave{b}_{Q_{k}(y)}} &\leq \sum_{k=0}^{\infty}\fint_{Q_{k+1}(y)}\abs{b-\ave{b}_{Q_k(y)}} \lesssim \sum_{k=0}^{\infty}\Norm{b}{\dot{C}^{\alpha,0}_{\beta}(\R^2)}\abs{Q_k(y)}^{\alpha} \\ 
	&\lesssim \Norm{b}{\dot{C}^{\alpha,0}_{\beta}(\R^2)}\abs{Q}^{\alpha}=  \Norm{b}{\dot{C}^{\alpha,0}_{\beta}(\R^2)}\prod_{i=1}^2\abs{x_i-y_i}^{\alpha}.
\end{align*}
The first martingale difference estimates identically and we conclude.
\end{proof}

\begin{thm}\label{thmex:alpha} Let $b\in L^1_{\loc},$ let $1<p<q<\infty$ and define $\alpha = \frac{1}{p}-\frac{1}{q}.$  Let $\gamma:\R\to\R^2$ be a monomial curve with the associated parameter tuples $\beta, \vare, \delta$ and $\vare_i\not=\delta_i$ for exactly one index $i\in\{1,2\}.$ Then, there holds that 
	\begin{align*}
		\Norm{b}{\dot{C}^{\alpha,0}_{\beta}(\R^2)} \lesssim \Norm{[b,H_{\gamma}]}{L^p(\R^2)\to L^q(\R^2)}.
	\end{align*}
\end{thm}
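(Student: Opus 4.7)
The plan is to run the closing argument of Section \ref{sect:close} essentially verbatim, replacing the diagonal $L^{p'}\times L^p$ Hölder pairing with the off-diagonal $L^{q'}\times L^p$ one, and invoking the monomial-curve analogs of Propositions \ref{prop:pieceA} and \ref{prop:pieceB} whose validity for the class of curves under consideration is secured by the geometric construction carried out in the proof of Theorem \ref{thm:main1gen}. Fix $Q\in\calR_\beta$ and write, with $\sigma=\sign(b-\ave{b}_Q)$,
\[
\int_Q\abs{b-\ave{b}_Q} = \int bf,\qquad f=(\sigma-\ave{\sigma}_Q)1_Q,
\]
then factor $f$ by two successive applications of the awf into four paired terms of the type $h_\bullet H_\gamma^{(*)}g_\bullet - g_\bullet H_\gamma^{(*)}h_\bullet$ plus the doubly-iterated remainder $\wt{(\wt{f}_P)}_Q$, which is supported on $Q$ and, once $A$ is sufficiently large, satisfies $\abs{\wt{(\wt{f}_P)}_Q}\lesssim \varepsilon^2$.

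For each paired term, shifting adjoints produces a single commutator. For instance,
\[
\int b\bigl[h_Q H_\gamma^* g_W - g_W H_\gamma h_Q\bigr] = -\int g_W [b,H_\gamma]h_Q,
\]
and the off-diagonal Hölder pairing together with the operator bound yields
\[
\Babs{\int g_W[b,H_\gamma]h_Q}\le \Norm{[b,H_\gamma]}{L^p\to L^q}\Norm{g_W}{L^{q'}}\Norm{h_Q}{L^p}.
\]
Using $\abs{g_W}\le 1_W$, $\abs{h_Q}\lesssim_A\abs{f}\le 2\cdot 1_Q$, and $\abs{W}\sim\abs{Q}$ from \eqref{size1}, this is bounded by
\[
C_A\,\Norm{[b,H_\gamma]}{L^p\to L^q}\abs{Q}^{1/q'+1/p}=C_A\,\Norm{[b,H_\gamma]}{L^p\to L^q}\abs{Q}^{1+\alpha},
\]
since $1/q'+1/p=1+\alpha$. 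The remaining three paired terms are treated identically, using $\Norm{[b,H_\gamma^*]}{L^p\to L^q}=\Norm{[b,H_\gamma]}{L^p\to L^q}$ (a consequence of $H_\gamma^*=-H_{-\gamma}$ and a reflection of the curve) together with the analogous pointwise bounds \eqref{B3} that apply in the second iteration.

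For the remainder, the mean-zero identity \eqref{mean:zero} gives
\[
\Babs{\int b\,\wt{(\wt{f}_P)}_Q}=\Babs{\int_Q(b-\ave{b}_Q)\wt{(\wt{f}_P)}_Q}\le \Norm{\wt{(\wt{f}_P)}_Q}{\infty}\int_Q\abs{b-\ave{b}_Q}\lesssim \varepsilon^2\int_Q\abs{b-\ave{b}_Q}.
\]
Combining the above yields, for constants $C_A,C$ independent of $Q$,
\[
\int_Q\abs{b-\ave{b}_Q}\le C_A\Norm{[b,H_\gamma]}{L^p\to L^q}\abs{Q}^{1+\alpha}+C\varepsilon^2\int_Q\abs{b-\ave{b}_Q}.
\]
Since $b\in L^1_{\loc}$, the common term is finite; absorbing it and dividing by $\abs{Q}^{1+\alpha}$, then taking the supremum over $Q\in\calR_\beta$, finishes the proof.

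The only genuine obstacle is bookkeeping: one must verify that in the monomial-curve setting treated in Theorem \ref{thm:main1gen} the factorization functions $h_Q,h_W,g_W,g_P$ and their second-iteration counterparts $o_P,o_W,u_W,g_Q$ retain the pointwise size bounds \eqref{A3} and \eqref{B3} together with the comparabilities \eqref{size1}. These were established intrinsically from the geometric properties of the triple $(Q,W,P)$ which persist in the monomial setting of Theorem \ref{thm:main1gen}, so no additional estimates are required and the off-diagonal argument goes through without modification.
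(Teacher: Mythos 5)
Your proposal is essentially the paper's own proof: the paper disposes of Theorem \ref{thmex:alpha} by rerunning the closing argument of Section \ref{sect:close} with the $L^{p'}\times L^p$ pairing replaced by the $L^{q'}\times L^p$ pairing, which produces the factor $\abs{Q}^{1/q'+1/p}=\abs{Q}^{1+\alpha}$ and hence the $\dot{C}^{\alpha,0}_{\beta}$ normalization, exactly as you compute; the geometric input for monomial curves is borrowed from Theorem \ref{thm:main1gen}, as you say. One small correction: the identity $\Norm{[b,H_{\gamma}^*]}{L^p\to L^q}=\Norm{[b,H_{\gamma}]}{L^p\to L^q}$ you invoke for the other three bracketed terms is not justified for a fixed $b$ (reflecting or reversing the curve conjugates the commutator by a reflection of $\R^2$, which replaces $b$ by $b$ composed with that reflection), but it is also unnecessary: pairing each bracket as in \eqref{h} and the remark following it, one gets $\int g_W[b,H_{\gamma}]h_Q$, $\int h_W[b,H_{\gamma}]g_P$, $\int u_W[b,H_{\gamma}]o_P$ and $\int o_W[b,H_{\gamma}]g_Q$, so only $[b,H_{\gamma}]\colon L^p\to L^q$ ever appears, and the size bounds \eqref{A3}, \eqref{B3}, \eqref{size1} again give the bound $C_A\Norm{[b,H_{\gamma}]}{L^p\to L^q}\abs{Q}^{1+\alpha}$ for each term. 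With that adjustment your absorption step and the division by $\abs{Q}^{1+\alpha}$ complete the proof as in the paper.
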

\begin{proof} Follows by the same proof as Theorem \ref{thm:main1} in Section \ref{sect:close} with the only difference being the replacement of $\Norm{[b,H_{\gamma}]}{L^p\to L^p}$ with $\Norm{[b,H_{\gamma}]}{L^p\to L^q}$ which gives the correct normalization.  
\end{proof}

\begin{rem}\label{rem1} If Problem \ref{prob1} has a positive solution by the awf argument, then this automatically leads to the generalization of Theorem \ref{thmex:alpha} to $d\geq 3$ and any monomial curve as therein.
\end{rem}

 One is tempted to attempt to prove the upper bound $\Norm{[b,H_{\gamma}]}{L^p(\R^2)\to L^q(\R^2)}\lesssim \Norm{b}{\dot{C}^{\alpha,0}_{\beta}(\R^2)}$ as follows. 
	By Proposition \ref{prop:alpha} and
	\begin{align*}
	\abs{ [b,H_{\gamma}]f(x)} &= \Babs{ p.v.\int (b(x)-b(x-\gamma(t)))f(x-\gamma(t))\frac{\ud t}{t}} \\ 
	&\qquad\qquad\leq \Norm{b}{\dot{C}^{\alpha,0}_{\gamma}(\R^2)}\int \abs{f(x-\gamma(t))}\prod_{i=1}^2\abs{\gamma_i(t)}^{\alpha}\frac{\ud t}{\abs{t}},
	\end{align*}
	it would be enough to bound the following fractional integral
	\begin{align*}
	I_{\gamma}^{\alpha}f(x) = \int f(x-\gamma(t))\prod_{i=1}^2\abs{\gamma_i(t)}^{\alpha}\frac{\ud t}{\abs{t}} = \int f(x-\gamma(t))\frac{\ud t}{\abs{t}^{1-\alpha\abs{\beta}}},
	\end{align*}
	where $\abs{\beta}=\sum_{i=1}^2\abs{\beta_i}.$
	It is not clear whether this operator is bounded $L^p\to L^q$ or not. In the other direction, a standard scaling argument shows that if $I^{\alpha}_{\gamma}$ is bounded $L^p\to L^q$ then necessarily $\alpha = \frac{1}{p}-\frac{1}{q}.$
		
	For completeness, we give this scaling argument in $\R^d.$
	For $\vec{\lambda} \in\R^d$ and $x\in\R^d$ denote $\vec{\lambda}x = (\lambda_1 x_1,\dots,\lambda_d x_d)$ and $\Dil_{\vec{\lambda}}f(x) = f(\vec{\lambda}x).$  For $\beta\in\R^d$ and $\lambda\in\R$ denote $\lambda^{\beta} = (\lambda^{\beta_1},\dots,\lambda^{\beta_d}).$ Dilation structure for $I_{\gamma}^{\alpha}$ is contained in the following identity,
	\begin{align*}
	\Dil_{\lambda^{\beta}}I^{\alpha}_{\gamma}f(x) &= \int f(\lambda^{\beta}x-\gamma(t))\abs{t}^{\alpha\abs{\beta}}\frac{\ud t}{t} \\ 
	&= \int \Dil_{\lambda^{\beta}}f(x-\gamma(t)) \lambda^{\alpha\abs{\beta}} \abs{t}^{\alpha\abs{\beta}}\frac{\ud t}{t} = \lambda^{\alpha\abs{\beta}}I^{\alpha}_{\gamma}\Dil_{\lambda^{\beta}}f(x).
	\end{align*}
	Then, assuming $\Norm{I^{\alpha}_{\gamma}}{L^p(\R^d)\to L^q(\R^d)}< \infty,$ and by $\Norm{\Dil_{\vec{\lambda}}f}{L^s(\R^d)} = \prod_{i=1}^d\lambda_i^{-\frac{1}{s}}\Norm{f}{L^{s}(\R^d)},$ we find that 
	\begin{align*}
	\lambda^{-\frac{\abs{\beta}}{q}}\Norm{I^{\alpha}_{\gamma}f}{L^q(\R^d)}  = \Norm{	\Dil_{\lambda^{\beta}}I^{\alpha}_{\gamma}f }{L^q(\R^d)} &= \lambda^{\alpha\abs{\beta}}\Norm{ I^{\alpha}_{\gamma}\Dil_{\lambda^{\beta}}f}{L^q(\R^d)} \\ 
	& \lesssim \lambda^{\alpha\abs{\beta}}\Norm{\Dil_{\lambda^{\beta}}f}{L^p(\R^d)} = \lambda^{\alpha\abs{\beta}} \lambda^{-\frac{\abs{\beta}}{p}}\Norm{f}{L^p(\R^d)}.
	\end{align*}
	Choosing $f$ so that both sides of the above estimate are positive and varying $\lambda$ shows that necessarily $\alpha = \frac{1}{p}-\frac{1}{q}.$

	With the above discussion at hand we are led to two problems.
\begin{prbl}  Let $1<p<q<\infty,$ $\alpha = \frac{1}{p}-\frac{1}{q}$ and let $\gamma$ be a monomial curve. Does there hold that $\Norm{[b,H_{\gamma}]}{L^p\to L^q}\lesssim \Norm{b}{\dot{C}^{\alpha,0}_{\beta}}?$ 
\end{prbl}

\begin{prbl} Let $1<p<q<\infty$ and $\alpha = \frac{1}{p}-\frac{1}{q}.$ For what curves $\gamma,$ is $I^{\alpha}_{\gamma}$ bounded $L^p\to L^q?$
\end{prbl}

%
%\subsubsection{Boundedness of the parabolic fractional integral}
%
%\begin{thm}\label{thm:alpha} Let $\gamma:\R\to\R^d$ be a monomial curve with the associated parameter tuple $\beta$ and let $\alpha = \frac{1}{p}-\frac{1}{q}$ for $1<p<q<\infty.$ Then, there holds that $\Norm{	I_{\gamma}^{\alpha}}{L^p\to L^q}< \infty.$
%\end{thm}
%\begin{proof} Decompose
%	\begin{align*}
%		I^{\alpha}_{\gamma}f(x) = \int_{\abs{t}\leq R(x)}f(x-\gamma(t))\abs{t}^{\theta}\frac{\ud t}{t} + \int_{\abs{t}>R(x)}f(x-\gamma(t))\abs{t}^{\theta}\frac{\ud t}{t} = I_{\leq}(x)+I_{>}(x),
%	\end{align*}
%	where we denote $\theta = \alpha\abs{\beta}.$
%Splitting the integration domain to dyadic scales, we find 
%	\begin{align*}
%		\abs{I_{\leq}(x)} \leq \sum_{k=0}^{\infty} (2^{-k}R(x))^{\theta}\fint_{\abs{t}\sim 2^{-k}R(x)} \abs{ f(x-\gamma(t))}\ud t \lesssim R(x)^{\theta}M_{\gamma}f(x),
%	\end{align*}
%	where $M_{\gamma}f(x) = \sup_{s>0}\fint_{\abs{t}<s}\abs{f(x-\gamma(t))}\ud t.$ For the other term, by H\"{o}lders inequality we have 
%	\begin{align*}
%		\abs{I_>(x)} &\leq \Big( \int_{\abs{t}>R(x)} \abs{f(x-\gamma(t))}^{p}\ud t \Big)^{\frac{1}{p}}  \Big(\int_{\abs{t}>R(x)} \frac{\ud t}{\abs{t}^{p'(1-\theta)}} \Big)^{\frac{1}{p'}} \\ 
%		&\sim \Big( \int_{\abs{t}>R(x)} \abs{f(x-\gamma(t))}^{p}\ud t \Big)^{\frac{1}{p}} R(x)^{-\frac{p'(1-\theta) + 1}{p'}}
%	\end{align*}
%	where we used that $p'(1-\theta) > 1.$ 
%	
%	Now consider the bijective differentiable mapping
%	\begin{align*}
%		h_x(s,t):\R^2\to\R^2,\qquad h_x(s,t) = (x_1,x_2-s) - (t,t^2).
%	\end{align*}
%%	 Denote $\Gamma(x) = \{x-\gamma(t): t > R(x)\}.$ 
%\end{proof}

The second off-diagonal case is when $q<p.$ 
\begin{thm}\label{thm:super} Let $b\in L^1_{\loc}(\R^2;\C)$ and let $\gamma:\R\to\R^2$ be a monomial curve such that $\vare_i=\delta_i$ for exactly one index $i\in\{1,2\}$. Let $1<q<p<\infty$ and define the exponent $r>1$ by the relation  $\frac{1}{q} = \frac{1}{r} + \frac{1}{p}.$ Then, there holds that 
	\begin{align*}
		\Norm{b}{\dot{L}^r(\R^2)}\sim \Norm{[b,H_{\gamma}]}{L^{p}(\R^2)\to L^q(\R^2)}.
	\end{align*}
\end{thm}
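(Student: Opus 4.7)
For any constant $c\in\C$, $[b,H_\gamma]=[b-c,H_\gamma]$. Hölder's inequality with $\tfrac{1}{q}=\tfrac{1}{r}+\tfrac{1}{p}$ and the $L^s$-boundedness of $H_\gamma$ for $s\in\{p,q\}$ give
\[
\Norm{[b,H_\gamma]f}{L^q}\leq\Norm{(b-c)H_\gamma f}{L^q}+\Norm{H_\gamma((b-c)f)}{L^q}\lesssim\Norm{b-c}{L^r}\Norm{f}{L^p},
\]
and taking the infimum over $c$ concludes the upper bound.

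\textbf{Lower bound: plan.} Set $N:=\Norm{[b,H_\gamma]}{L^p\to L^q}$. The idea is to first produce a local $L^{p'}$-type estimate of $b$ around its mean on every parabolic rectangle by feeding a nonlinear mean-zero test function into the awf of Propositions \ref{prop:pieceA} and \ref{prop:pieceB} (available for the present class of monomial curves via Theorem \ref{thm:main1gen}), and then upgrade to the global $\dot L^r$-bound through a martingale argument on the anisotropic dyadic grid $\calD_\beta$ of \cite{ClaOu2017} (already invoked in the proof of Proposition \ref{prop:alpha}).

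\textbf{Local step.} Fix $Q\in\calR_\beta$, write $c_Q=\ave{b}_Q$, and test against
\[
\tilde f_Q=\sign(b-c_Q)|b-c_Q|^{p'-1}1_Q-\alpha_Q\cdot 1_Q,\qquad \alpha_Q=\fint_Q\sign(b-c_Q)|b-c_Q|^{p'-1},
\]
which has $\int\tilde f_Q=0$. Using $(p'-1)p=p'$ one verifies $\int b\tilde f_Q=\int_Q|b-c_Q|^{p'}$ and $\Norm{\tilde f_Q}{L^p}\lesssim(\int_Q|b-c_Q|^{p'})^{1/p}$. Running the closing argument of Section \ref{sect:close} with this $\tilde f_Q$ in place of the BMO testing function, using the $L^p\to L^q$ norm and $\Norm{g_W}{L^{q'}}\lesssim|W|^{1/q'}\sim|Q|^{1/q'}$, and absorbing the error (see the obstacle paragraph), one arrives at $\Norm{b-c_Q}{L^{p'}(Q)}\lesssim_A N|Q|^{1/q'}$. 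Hölder's inequality together with the identity $\tfrac{1}{q'}-\tfrac{1}{p'}=-\tfrac{1}{r}$ then yields the fractional-BMO bound
\begin{equation}\label{eq:fracBMOsup}
\fint_Q|b-c_Q|\lesssim_A N|Q|^{-1/r}\qquad\forall\,Q\in\calR_\beta.
\end{equation}
From \eqref{eq:fracBMOsup} the sequence $(\ave{b}_{Q_n})$ is Cauchy along any increasing $Q_n\nearrow\R^2$, which defines $c:=\lim_n\ave{b}_{Q_n}$; telescoping $b-c=\sum_{k\in\Z}(\E_{k}b-\E_{k-1}b)$ over the generations of $\calD_\beta$ and controlling each martingale difference pointwise by \eqref{eq:fracBMOsup}, a standard $L^r$-martingale estimate produces $\Norm{b-c}{L^r(\R^2)}\lesssim_A N$.

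\textbf{Main obstacle.} The key difficulty is the error absorption in the local step: the test function $\tilde f_Q$ depends nonlinearly on $b$ and is generally unbounded, whereas the error estimates \eqref{A4}, \eqref{B4} are recorded via $L^\infty$-norms. The classical fix is to truncate $|b-c_Q|^{p'-1}$ at some level $M$, so that the test function is $L^\infty$-bounded by $M^{p'-1}$, apply the awf with this bounded version (the error is now absorbable upon choosing $A$ large enough depending on $M$), and then pass to the limit $M\to\infty$ via monotone convergence to recover \eqref{eq:fracBMOsup} for general $b\in L^1_{\loc}$. This approximation procedure, together with the careful bookkeeping to track the $L^p\to L^q$ norm through the parabolic awf machinery, is the only essentially new element compared to the proof of Theorem \ref{thm:main1}.
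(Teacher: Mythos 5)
Your upper bound coincides with the paper's (Stein--Wainger boundedness of $H_{\gamma}$ plus H\"older after subtracting a constant), and your local step is essentially the standard awf testing estimate; in fact you can get the local conclusion $\fint_Q\abs{b-\ave{b}_Q}\lesssim_A \Norm{[b,H_{\gamma}]}{L^p\to L^q}\abs{Q}^{-1/r}$ for all $Q\in\calR_{\beta}$ more simply with the \emph{bounded} sign-type test function of Section \ref{sect:close} (then the error absorption is unproblematic), so the nonlinear test function and the truncation are not needed. Incidentally, your truncation fix as stated is circular: if $A$ is chosen "large enough depending on $M$", the main-term constant $C_A$ blows up as $M\to\infty$, while for fixed $A$ the error $\varepsilon^2 M^{p'-1}\int_Q\abs{b-\ave{b}_Q}$ is in general not dominated by the truncated main term, so nothing is absorbed.

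The genuine gap is the final upgrade. The condition $\sup_{Q\in\calR_{\beta}}\abs{Q}^{1/r}\fint_Q\abs{b-\ave{b}_Q}<\infty$ does \emph{not} imply $b\in\dot L^r$: it defines a Morrey--Campanato-type class with negative exponent which contains weak-$L^r$ functions that are not in $L^r$ modulo constants. Concretely, in the parabolic setting take $b(x)=\rho(x)^{-3/r}$ with $\rho$ the parabolic quasinorm (note $\abs{Q}\sim\ell(I)^3$ for $Q\in\calR_{\gamma}$): for parabolic rectangles centred near the origin $\fint_Q\abs{b-\ave{b}_Q}\sim\abs{Q}^{-1/r}$, for rectangles far from the origin the oscillation is even smaller, yet $b\in L^{r,\infty}\setminus L^r$ and $b-c\notin L^r$ for every constant $c$. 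Accordingly, your martingale telescoping cannot work: pointwise bounds of size $\abs{Q}^{-1/r}$ on the generation-$k$ differences only sum to a weak-type bound, and no inequality $\Norm{b-c}{L^r}\lesssim\sup_Q\abs{Q}^{1/r}\fint_Q\abs{b-\ave{b}_Q}$ can hold. This is precisely why the case $q<p$ in \cite{HYT2021JMPA} required a new, global argument, and it is the route the paper takes: one fixes a large parabolic rectangle containing a given compact set, runs a (parabolic) Calder\'on--Zygmund/stopping-time decomposition, applies the approximate weak factorization of Propositions \ref{prop:pieceA} and \ref{prop:pieceB} on each stopping rectangle with an $L^{r'}$-dual function built into the test functions, and sums using the disjointness of the stopping rectangles together with the geometric comparability $\abs{W(Q)}\sim\abs{P(Q)}\sim\abs{Q}$ and $Q\subset\lambda S$ -- these are exactly the points $(i)$--$(iv)$ listed in the paper's proof. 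Your argument would need to be replaced by (an anisotropic version of) that stopping-time/duality scheme rather than a single-rectangle testing condition.
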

\begin{proof} It was shown in Stein, Wainger \cite{SteWai1978} that $H_{\gamma}:L^p\to L^p,$ $1<p<\infty$ is a bounded operator.
%	 Later, Duoandikoetxea and Rubio de Francia \cite{DuoRdF1986} revisited and streamlined this (and many more) result by Fourier analytic methods. 
	 By the boundedness of $H_{\gamma}$, that the commutator is unchanged modulo constants and Hölder's inequality we find that
	\begin{align*}
		\Norm{[b,H_{\gamma}]}{L^p\to L^q} &= 	\Norm{[b-c,H_{\gamma}]}{L^p\to L^q}  \leq \Norm{(b-c)H_{\gamma}}{L^p\to L^q}  + \Norm{H_{\gamma}(b-c)}{L^p\to L^q} \\ 
		&\leq \Norm{b-c}{L^r}\Norm{H_{\gamma}}{L^p\to L^p} + \Norm{H_{\gamma}}{L^q\to L^q}\Norm{f\mapsto (b-c)f}{L^q\to L^q} \\
		&\leq   \Norm{b-c}{L^r}\Norm{H_{\gamma}}{L^p\to L^p} +  \Norm{H_{\gamma}}{L^q\to L^q}\Norm{b-c}{L^r} \\ 
		&\lesssim \Norm{b-c}{L^r},
	\end{align*}
	for any constant $c\in\C.$  This upper bound is valid for any $d$ and any monomial curve.
	
For the lower bound we only discuss the case of the parabola and it is clear how to adapt the proof to any monomial curve $\gamma:\R\to\R^2$ with the sign tuples agreeing in exactly one entry.	
With the factorization results at hand, the lower bound follows with the same argument as the case $q<p$ in \cite{HYT2021JMPA}. The important points to consider when adapting the proof are the following:
\begin{enumerate}[$(i)$]
\item If $K\subset\R^d$ is compact, then $K\subset Q$ for some $Q\in\calR_{\gamma}.$
\item Any parabolic rectangle $Q\in\calR_{\gamma}$ allows \emph{parabolic} stopping time arguments with constants independent of $Q.$
\item Each $Q\in\calR_{\gamma}$ allows a factorization of functions, as in propositions \ref{prop:pieceA} and \ref{prop:pieceB}.
\item There exists $\lambda > 0$ so that for each $Q\in\calR_{\gamma},$ there holds that 
\begin{align*}
\abs{S}\sim \abs{Q},\qquad  Q\subset \lambda S,\qquad S\in\{W(Q),P(Q)\},
\end{align*}
where the implicit constants do not depend on $Q$ and the dilation $\lambda S$ of any connected bounded set $S$ is
\begin{align*}
	\lambda S = \big\{y: y_i =c_{\pi_i(S) } +\lambda(x_i-c_{\pi_i(S) }),\quad x\in S,\quad  i = 1,\dots,d \big\}.
\end{align*}
Since $S$ is connected and bounded $\pi_i(S)$ is a finite interval and the meaning of the centre point $c_{\pi_i(S)}$ is clear.
\end{enumerate}
We leave the details to the interested reader.  
\end{proof}

\begin{rem} Theorem \ref{thm:super} extends to the case $d\geq 3$ as long as we can carry out the awf argument in that setting, similarly as with Remark \ref{rem1}.
\end{rem}

The author has no competing interests to declare.

\bibliography{references}

\end{document}